\documentclass[11pt,reqno]{amsart}
\usepackage{amsmath,amssymb,amsthm,mathtools,mathrsfs}
\usepackage[colorlinks=true,linkcolor=blue,citecolor=blue,urlcolor=blue]{hyperref}
\usepackage[margin=1.05in]{geometry}
\usepackage{enumitem}
\usepackage{bm}

\newtheorem{theorem}{Theorem}[section]
\newtheorem{lemma}[theorem]{Lemma}
\newtheorem{proposition}[theorem]{Proposition}
\newtheorem{corollary}[theorem]{Corollary}
\newtheorem{remark}[theorem]{Remark}

\numberwithin{equation}{section}

\newcommand{\R}{\mathbb R}
\newcommand{\cJ}{\mathcal J}
\newcommand{\cL}{\mathcal L}
\newcommand{\cM}{\mathcal M}

\newcommand{\eps}{\varepsilon}
\newcommand{\grad}{\nabla}

\newcommand{\olF}{\overline F}
\newcommand{\sgn}{\operatorname{sgn}}
\newcommand{\Om}{\Omega}

\title[A Bernstein problem for translators]{A Bernstein problem for translating solutions to the mean curvature flow}

\author[J. D\'avila]{Juan D\'avila}
\address{Juan D\'avila, Department of Mathematical Sciences, University of Bath, Bath BA2 7AY, United Kingdom}
\email{jddb22@bath.ac.uk}

\author[M. del Pino]{Manuel del Pino}
\address{Manuel del Pino, Department of Mathematical Sciences, University of Bath, Bath BA2 7AY, United Kingdom}
\email{mdp59@bath.ac.uk}

\author[J.C. Wei]{Juncheng Wei}
\address{Juncheng Wei, Department of Mathematics, Chinese University of Hong Kong, Shatin, New Territories, Hong Kong}
\email{wei@math.cuhk.edu.hk}

\begin{document}

\begin{abstract}
We study entire graphical translating solutions of the mean curvature flow,
\[
\operatorname{div}\!\left(\frac{\nabla G}{\sqrt{1+|\nabla G|^2}}\right)
 =\frac{1}{\sqrt{1+|\nabla G|^2}}
 \qquad\text{in }\mathbb R^N.
\]
Every such graph is mean-convex, since its mean curvature is the vertical component of its unit normal.  In dimension two, mean-convex translating solitons are convex, and an entire graphical translator is therefore the rotationally symmetric bowl soliton.  In higher dimensions Wang constructed non-rotational entire convex translating graphs.  We prove that a further loss of rigidity occurs at the Bernstein dimension: for every $N\ge 8$ there exists a one-parameter family of entire graphical translators that are mean-convex but not convex.

The construction starts from the Bombieri--De Giorgi--Giusti (BDG) entire minimal graph in $\mathbb R^8$ and develops a singular perturbation theory for the translator equation around it.  The main new feature is a transition layer near Simons' cone: the translating term breaks the odd symmetry of the minimal graph, and after a suitable recentering the matching problem is governed by a parabolic inner equation.  A detailed analysis of this layer, together with weighted Jacobi theory on the BDG graph and global barriers, yields the desired entire solutions.
\end{abstract}
\maketitle

\setcounter{tocdepth}{1}
\tableofcontents

\section{Introduction and statement of the main result}\label{sec:intro}

A translating soliton for the mean curvature flow is a hypersurface that evolves only by translation.  If $\Sigma\subset\mathbb R^{N+1}$ translates with unit velocity in the $e_{N+1}$ direction, then, after choosing the orientation appropriately, it satisfies
\begin{equation}\label{translator-geometric}
 H=\langle \nu,e_{N+1}\rangle,
\end{equation}
where $H$ and $\nu$ denote the scalar mean curvature and the unit normal.  Translators are stationary solutions of mean curvature flow in a moving frame and are among the fundamental models that arise in blow-up analysis.  In particular, Type II rescalings naturally lead to eternal solutions and, in important mean-convex and two-convex settings, to translating solitons.  This point of view is already present in the work of Huisken--Sinestrari on singularities of mean-convex flows, in White's analysis of the singular set and the structure of mean-convex singularities, and in Wang's classification program for ancient convex solutions and translators; see \cite{HSsing,HSconv,Ilmanen,WhiteSize,WhiteNature,WhiteSubsequent,Wang}.  The role of translators as Type II blow-up models is also explicit in \cite{AngenentVelazquez,CSS,DDPN,BourniLangford}.  Thus the geometry and classification of complete translators are not isolated elliptic questions: they are closely tied to the possible local geometries of singular mean curvature flow.

The present paper concerns a particularly rigid class, namely \emph{entire graphical} translators.  Writing
\[
 \Sigma=\{(x,G(x)):x\in\mathbb R^N\}
\]
and choosing the upward normal, equation \eqref{translator-geometric} becomes
\begin{equation}\label{translator}
 \operatorname{div}\left(\frac{\nabla G}{\sqrt{1+|\nabla G|^2}}\right)
 =\frac1{\sqrt{1+|\nabla G|^2}}
 \qquad\text{in }\mathbb R^N.
\end{equation}
An elementary observation is central to the interpretation of our result: every graphical translator solving \eqref{translator} is automatically mean-convex,
\begin{equation}\label{meanconvex}
 H=\frac1{\sqrt{1+|\nabla G|^2}}>0.
\end{equation}
Thus our question is not whether mean-convex entire translators exist, but rather how much geometric rigidity the positivity in \eqref{meanconvex} imposes.

The known picture already exhibits a nontrivial dependence on dimension.  In dimension $N=2$, Spruck and Xiao proved that every complete two-sided mean-convex translator in $\mathbb R^3$ is convex \cite{SpruckXiao}.  Wang, in his study of convex ancient solutions and translating solutions, proved that an entire convex translator in dimension two is rotationally symmetric; consequently the entire graphical translator is the bowl soliton.  In higher dimensions, however, Wang showed that rotational symmetry is no longer forced: for $N\ge3$ there exist entire \emph{convex} translating graphs which are not rotationally symmetric \cite{Wang}.  One therefore already sees a first loss of rigidity when passing from $N=2$ to higher dimensions: radial symmetry is lost while convexity is retained.  Conversely, stronger curvature assumptions still force rigidity in higher dimension.  Haslhofer proved uniqueness of the bowl under noncollapsing and uniform two-convexity \cite{HaslhoferBowl}, and Spruck and Sun proved that uniformly two-convex entire graphical translators are the rotationally symmetric bowl solitons \cite{SpruckSun}; related Type II rigidity results in the two-convex setting were obtained by Bourni and Langford \cite{BourniLangford}.

The purpose of this paper is to show that a second, more substantial loss of rigidity occurs in high dimension: convexity itself can fail, even though mean-convexity is built into the translator equation.

\begin{theorem}\label{mainthm}
Assume $N\ge 8$.  Then equation \eqref{translator} admits a one-parameter family of entire graphical translating solitons that are mean-convex but not convex.
\end{theorem}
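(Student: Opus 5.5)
We build the solutions by singular perturbation from the Bombieri--De Giorgi--Giusti entire minimal graph $F:\R^8\to\R$. Recall that $F$ is odd with respect to Simons' cone $C=\{|x'|=|x''|\}$, $x=(x',x'')\in\R^4\times\R^4$, that $F$ is homogeneous of degree three at infinity to leading order, and that its graph is a nonflat minimal graph. We first treat $N=8$; the case $N\ge9$ then follows by taking $G(x,y)=\tilde G(x)$ plus a perturbation in the extra variables, or more simply by running the same construction with $F$ extended trivially as a cylinder. This extension must be checked, because a function independent of $y$ still has to solve the translator equation in $N$ variables; since the equation then only involves $x$, a translator in $\R^8$ gives one in $\R^N$ by cylindrical extension, and non-convexity is preserved. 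For a large parameter $\lambda$ we use the scaling $G(x)=\lambda^{-1}g(\lambda x)$. The rescaled translator equation reads $\operatorname{div}(\nabla g/\sqrt{1+|\nabla g|^2}) = \eps/\sqrt{1+|\nabla g|^2}$ with $\eps=\lambda^{-1}$. This makes $g\approx \alpha F$ (with $\alpha$ the free amplitude, or the scale itself giving the one-parameter family) a natural first approximation on compact sets.

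Next we analyse where this ansatz breaks down. The right-hand side $\eps/\sqrt{1+|\nabla g|^2}$ is largest where $|\nabla F|$ is small, which happens near the cone $C$, where $F$ changes sign. Far out, $|\nabla F|\sim r^2$, so the forcing is $O(\eps r^{-2})$ and is harmless. Near $C$ at distance $r$, however, the forcing breaks the odd symmetry. We therefore introduce an inner variable $s$ normal to $C$ and a recentred layer in which the graph is written as $F$ plus a correction solving, to leading order, a one-dimensional-in-$s$ problem with $r$ acting as a time-like variable. This is the parabolic inner equation mentioned in the abstract. We solve it with suitable decay, match it to an outer correction $\phi$, and require $\phi$ to solve the linearized mean curvature (Jacobi) operator on the BDG graph, $J\phi = \eps/\sqrt{1+|\nabla F|^2}+\text{error}$. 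Invertibility comes from weighted H\"older theory for $J$ on the BDG graph: we use the explicit positive Jacobi field coming from vertical translation together with the known asymptotics of $F$, and choose weights $r^{\gamma}$ that avoid the indicial roots of the Jacobi operator of Simons' cone.

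The nonlinear problem is then closed by a fixed point argument in the weighted spaces on the region $r\lesssim R_\eps$, with $R_\eps\to\infty$ a power of $\eps^{-1}$. Beyond $R_\eps$ the translating term dominates and the graph should become bowl-like. We handle this region with global barriers: sub- and supersolutions built from $\alpha F$ and from the bowl soliton, glued together via Perron's method. These produce an entire solution $G$ trapped between the barriers, and the barriers are what give $G$ the BDG shape on the scale $\lambda^{-1}$. The freedom in $\lambda$ gives the one-parameter family.

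Mean convexity is automatic from \eqref{meanconvex}. For non-convexity we argue as follows. After rescaling, $g$ converges in $C^2_{loc}$ to $\alpha F$, and the graph of $F$ is not convex: a nonflat entire minimal graph has $H=0$ and nonzero second fundamental form somewhere, hence has principal curvatures of both signs. By $C^2$ convergence, $D^2G$ has a negative eigenvalue at some point for $\lambda$ large. The main obstacle is the transition layer near $C$: solving the parabolic inner problem with the right decay and matching it to the Jacobi theory without losing powers of $\eps$. I would first attempt to solve it by separation of variables in $s$ and explicit heat-kernel bounds.
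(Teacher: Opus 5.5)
\textbf{Far field.} The step that fails is your treatment of the far field. You assert that beyond $R_\eps$ the translating term dominates and the graph becomes bowl-like, and you plan to glue barriers built from $\bar F$ and from the bowl by Perron's method. Scaling gives the opposite picture.

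On the BDG end $|\nabla\bar F|\asymp r^2$ uniformly in $\theta$, \emph{including} on Simons' cone: there $g=0$ but $g'(\pi/4)>0$, so $D(\theta)\asymp1$. Hence the forcing $\eps/\sqrt{1+|\nabla\bar F|^2}\asymp\eps r^{-2}$ is not concentrated near the cone. It is also never dominant, since it is small compared with the curvature scale $|A_\Gamma|\sim r^{-1}$. Because the Jacobi operator lowers homogeneity by four, the forcing only produces a correction $\eps r^2a_1(\theta)$, which is relatively $O(\eps/r)$ compared with $\bar F\sim r^3$. The translators therefore stay asymptotic to the cubic, sign-changing BDG profile at infinity; the paper gets $F_\eps=\bar F+O(\eps(1+r^2))$.

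A bowl-type barrier grows only quadratically. Anything modeled on $\bar F$ is of size $\pm r^3g(\theta)$ in the two sectors, so the two cannot be ordered in both sectors, and your Perron step has no ordered pair to work with.

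What replaces it in the paper is a \emph{global} approximate solution with $|\mathcal M_\eps(F_{\rm app})|\le C\eps(1+r)^{-4-2\nu}$ on all of $\R^8$. This is combined with a positive vertical barrier $\psi_\eps\asymp r^{-\nu}$, built from the separated mode $r^{-\nu}q_\nu(\theta)$, satisfying $D\mathcal M_\eps[F_{\rm app}]\psi_\eps\le-c(1+r)^{-4-\nu}$. The corresponding normal weight $r^{-2-\nu}$ lies between the Simons-cone indicial roots $-2,-3$; this is the one place where your weighted-inverse idea fits. Then $F_{\rm app}\pm A\eps\psi_\eps$ are ordered global barriers, and Dirichlet problems on balls plus local estimates give the entire solution. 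No fixed point on a bounded region is needed.

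\textbf{Where the difficulty actually lies.} This moves all the difficulty into the region you discard. To get the residual below the critical weight $r^{-4}$, three slow sources cannot go into a weighted inverse, which needs decay $r^{-4-\nu}$: the source $\eps r^{-2}$ (even), $\eps^2r^{-3}$ (odd), and $\eps^3r^{-4}$ (even and logarithmically resonant). They must be solved explicitly by the separated formula, whose homogeneous kernel is $r^\beta g^{\beta/3}$ for every $\beta$.

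This is also where the layer comes from, for a different reason than the one you give. Since $a_1(\pi/4)\ne0$, the zero set shifts by $\eps r^2$. The next correction $rq_2\sim t^{2/3}/r$ has singular $t$-derivatives, and the drift $\eps\partial_T$ acts on them. After recentering $T=t+\eps U_1$ and writing $h=r^{-3}H$, the inner operator is $H_{TT}+H_{rr}-\eps H_T$. It is parabolic with $T/\eps$ as time and $r$ as space (you have the roles reversed), with layer width $|T|\sim\eps r^2$. This width exceeds the elliptic scale $|T|\sim r$ only when $\eps r\gtrsim1$, i.e.\ precisely beyond your $R_\eps$ if $R_\eps\ll\eps^{-1}$.

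The needed self-similar profile does not decay. It is a Kummer function growing like $\pm|p|^{2/3}$, with the leading coefficients on the two sides fixed by a flat Stokes mode. Its second algebraic branch $|p|^{1/6}$ forces an additional $\eps^{5/2}r^{1/2}g^{1/6}$ term that must be matched.

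\textbf{What is fine.} Your non-convexity argument is correct and differs from the paper's swap-symmetry argument: a non-flat minimal hypersurface has $A\ne0$ and $\operatorname{tr}A=0$, so the Hessian is indefinite somewhere. The cylindrical extension to $N>8$ is also correct. Note, though, that $\alpha\bar F$ is not minimal for $\alpha\ne\pm1$, so the family parameter is the speed $\eps$. Distinctness of the family still needs an argument; the paper uses the cubic leading amplitude $\eps^{-2}$ of $G_\eps$.
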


Theorem~\ref{mainthm} should be viewed as a Bernstein-type phenomenon for translating graphs.  If the right-hand side of \eqref{translator} is replaced by zero, the equation becomes the minimal surface equation.  The classical Bernstein theorem and its higher-dimensional extensions give affine rigidity for entire minimal graphs up to dimension seven, while Bombieri, De Giorgi and Giusti constructed a non-affine entire minimal graph in dimension eight \cite{BDG}.  The BDG graph is asymptotic to a homogeneous cubic graph associated with Simons' cone, and it is this high-dimensional geometry that forms the background of our construction.  In other words, the first dimension in which the classical Bernstein problem loses rigidity is also the first dimension in which our construction produces mean-convex entire translating graphs that fail to be convex.

There is also a useful conceptual connection with the singularity theory of mean-convex mean curvature flow.  White's work shows that mean-convex singularities have a strong asymptotic convexity structure and that their tangent flows are modeled by shrinking spheres and cylinders; see \cite{WhiteSize,WhiteNature,WhiteSubsequent}.  Together with the Type II results cited above, this explains why the geometry and classification of translators are natural questions in singularity analysis.  We emphasize, however, that the dimensional threshold in Theorem~\ref{mainthm} does not come from this singularity theory: it comes directly from the failure of Bernstein rigidity and the existence of the BDG graph in dimension eight.

The analogy with our earlier construction of counterexamples to De Giorgi's conjecture is even more direct.  In \cite{DPKW}, del Pino, Kowalczyk and Wei used the Bombieri--De Giorgi--Giusti graph as the geometric skeleton for solutions of the Allen--Cahn equation in dimensions $N\ge9$.  That work may be regarded as a nonlinear elliptic counterpart of the Bernstein phenomenon: a high-dimensional minimal graph is converted into a non-one-dimensional entire solution of a different equation.  The present construction again starts from the BDG graph, but the new equation is geometrically much closer to the minimal surface equation itself.  We perturb the minimal graph into a translating graph.  The analogy is therefore strong at the level of geometry and of the Jacobi operator, but the mechanism is different in one decisive respect: the translator forcing breaks the odd symmetry across Simons' cone that is available in the minimal and Allen--Cahn settings.  Resolving the resulting transition across the cone is the main new analytical issue of this paper.

A second antecedent is the use of the same BDG geometry in the construction of nontrivial epigraphs for Serrin's overdetermined problem by del Pino, Pacard and Wei \cite{DPPW}.  That work required a refined analysis of the Jacobi operator of the BDG graph, including a comparison between the exact graph and its homogeneous model and explicit separated solutions for slowly decaying right-hand sides.  The present problem needs a related, but not identical, package of estimates.  For that reason we do not use \cite{DPKW,DPPW} as black boxes.  We recall and rederive below the Jacobi facts that enter the translator construction, in the notation and weighted spaces used here.  In particular, the slow even and odd modes are worked out explicitly in Section~\ref{sec:slowmatch} and Appendix~\ref{app:slow-jacobi}.

We briefly place the theorem in the broader literature on translating graphs.  Translating solutions appeared early in work of Altschuler--Wu \cite{AltschulerWu}; the rotationally symmetric bowl and its stability were studied in particular in \cite{CSS}.  Wang's work \cite{Wang} made the classification of convex entire translators a central part of the study of Type II singularities.  In dimension three, Shahriyari developed structural and stability results for translating graphs \cite{Shahriyari}, and Hoffman--Ilmanen--Mart\'in--White later obtained a classification of complete translating graphs in $\mathbb R^3$ together with higher-dimensional families of examples \cite{HIMW}.  Our theorem addresses a different question: it concerns the possibility of non-convexity for an \emph{entire} graph whose mean curvature is nevertheless strictly positive everywhere.

Let us now describe the construction.  Since examples in dimensions $N>8$ follow by taking products with flat factors, it is enough to work in $\mathbb R^8$.  We introduce a small translating speed $\varepsilon>0$ and solve
\begin{equation}\label{eps-translator}
 \operatorname{div}\left(\frac{\nabla F}{\sqrt{1+|\nabla F|^2}}\right)
 =\frac{\varepsilon}{\sqrt{1+|\nabla F|^2}}
 \qquad\text{in }\mathbb R^8.
\end{equation}
If $F_\varepsilon$ solves \eqref{eps-translator}, then
\[
 G_\varepsilon(x)=\varepsilon F_\varepsilon(\varepsilon^{-1}x)
\]
solves \eqref{translator}.  At $\varepsilon=0$ we take the $O(4)\times O(4)$-invariant BDG minimal graph $\overline F$.  Writing $x=(\mathbf u,\mathbf v)\in\mathbb R^4\times\mathbb R^4$, with $u=|\mathbf u|$ and $v=|\mathbf v|$, it satisfies
\begin{equation}\label{oddBDG}
 \overline F(u,v)=-\overline F(v,u),
\end{equation}
and hence vanishes on Simons' cone $u=v$.

The existence statement proved here was announced in the expository article of del Pino and Wei \cite{DPW-Milan}.  No complete proof was given there; the full argument is presented in this paper.

The preceding discussion also indicates why the perturbation of the BDG graph is not a routine application of an implicit-function theorem.  The homogeneous cubic model
\[
 F_0=r^3g(\theta)
\]
has an unusually explicit Jacobi structure: a separated perturbation $r^\beta q(\theta)$ is mapped, at leading order, to $r^{\beta-4}$ times an angular operator.  After the homogeneous Jacobi factor $g^{\beta/3}$ is extracted, the angular equation can be written in divergence form.  This makes it possible to identify explicitly the slowly decaying responses that cannot be handled by the fast weighted inverse.  The initial translator forcing has size $r^{-2}$ and generates an even correction of size $r^2$.  At the following orders one encounters an odd response with angular behavior $g^{2/3}$ and, at the borderline degree, a logarithmic resonance.  These terms are part of the leading approximation and must be determined before the remaining error can be placed in a genuinely decaying weighted space.  The separated Jacobi calculation is carried out in the body of the paper and, for completeness, is derived again in a systematic form in Appendix~\ref{app:slow-jacobi}.

The main difficulty is concentrated near Simons' cone.  The BDG graph is odd under the interchange $u\leftrightarrow v$, whereas the translating term is positive and even.  Consequently the first correction destroys the symmetry that would otherwise allow the cone to be treated as a boundary.  The zero set is displaced by an amount of order $\varepsilon r^2$, and the next outer correction, when expressed in the original height variable $t=F_0$, has the characteristic behavior $|t|^{2/3}$.  Although this function is continuous across $t=0$, its transversal derivatives are singular there.  The correct description is obtained by recentering the height variable by the first even correction.  In the recentered variable the mixed terms in the nonlinear expansion combine with the transport produced by the translating term, and the normal Jacobi operator has a much simpler leading form close to the cone.

This simplification reveals a second scale that is invisible in a purely outer expansion.  Writing the normal displacement with its natural radial factor removes the leading curvature potential, and on the long transition scale the equation becomes parabolic, with the recentered height playing the role of time.  The resulting self-similar ordinary differential equation is a confluent hypergeometric equation.  Its smooth solution contains not only the expected $|p|^{2/3}$ behavior but also a second algebraic mode $|p|^{1/6}$.  In the original variables this second mode produces an order $\varepsilon^{5/2}$ correction.  The next coefficient, of order $|p|^{-1/3}$, agrees with the singular part of the third outer forcing, including its numerical coefficient.  Thus the fractional powers that appear in the outer bookkeeping are the asymptotic traces of one smooth transition layer through the cone, rather than independent singular corrections patched together by hand.

Once this layer has been identified on the homogeneous model, it remains to transfer the construction to the exact BDG graph and to control the global error.  We prove a refined comparison between the exact graph and its homogeneous approximation, remove the slow Jacobi components explicitly, and solve the remaining fast terms in weighted spaces.  The exactification of the transition layer is performed in normal variables; after rescaling on intrinsic balls the relevant operator has uniform ellipticity, and the correction can be estimated without losing the decay dictated by the model.  The resulting approximate translator has a residual smaller than the strict negative margin of a global positive Jacobi barrier.  This produces ordered sub- and supersolutions on large balls.  Solving the translator Dirichlet problem between them and letting the radius tend to infinity yields the required entire graph.  Since the constructed solutions converge locally in $C^2$ to the BDG graph as $\varepsilon\to0$, a negative Hessian direction of the BDG graph persists, and the translators are not convex.

We have tried to keep the argument essentially self-contained.  The external geometric input is the existence of the Bombieri--De Giorgi--Giusti graph together with its coarse asymptotic structure.  The Jacobi estimates actually used in the perturbation, including the slow separated modes, the maximum principle, weighted Schauder estimates and the fast inverse, are established in the present paper.  Their relation with the corresponding analysis in \cite{DPKW,DPPW} is explained where appropriate, but those papers are not used as substitutes for proofs needed here.  The longer elliptic estimates are collected in Appendix~\ref{app:weighted}, while Appendix~\ref{app:taylor} contains the graph-variable Taylor estimate used in the residual analysis.  This arrangement keeps the main construction readable without requiring the reader to consult an earlier paper in order to verify a step of the proof.

The paper is organized as follows.  Section~\ref{sec:bdg-jacobi} develops the homogeneous BDG geometry and the Jacobi operator.  Section~\ref{sec:outer} introduces the adapted coordinates and constructs the outer corrections.  Section~\ref{sec:innerlayer} derives the recentered transition problem and studies its self-similar profile.  Section~\ref{sec:exactbdg} transfers the analysis to the exact BDG graph and establishes the refined comparison estimates.  Section~\ref{sec:slowmatch} treats the slowly decaying forcing terms and completes the matching.  Section~\ref{sec:globalbarriers} constructs the global approximate solution and the barriers, and Section~\ref{sec:existence} closes the exhaustion argument and proves non-convexity.  The appendices contain the detailed elliptic and Jacobi estimates used in these sections.

\section{The BDG graph, its homogeneous model, and Jacobi geometry}\label{sec:bdg-jacobi}

\subsection{Reduction to the sector}

Set
\[
T:=\{(\mathbf u,\mathbf v):v>u\}.
\]
By \eqref{oddBDG}, the entire BDG graph is determined by its restriction to $T$, where it is positive and vanishes on $\partial T=\{u=v\}$.  We use polar variables in the quotient plane,
\[u=r\cos\theta,\qquad v=r\sin\theta,
\qquad \frac\pi4<\theta<\frac\pi2.\]
For an $O(4)\times O(4)$-invariant function $F=F(r,\theta)$, the graphical mean curvature operator is
\begin{align}\label{polarMC}
H[F]
&=\frac1{r^7\sin^3(2\theta)}\partial_r\left(
\frac{F_r r^7\sin^3(2\theta)}{\sqrt{1+F_r^2+r^{-2}F_\theta^2}}
\right)\notag\\
&\quad+
\frac1{r^7\sin^3(2\theta)}\partial_\theta\left(
\frac{F_\theta r^5\sin^3(2\theta)}{\sqrt{1+F_r^2+r^{-2}F_\theta^2}}
\right).
\end{align}

The factor $r^7\sin^3(2\theta)$ is not decorative.  It remembers that the two-dimensional quotient $(u,v)$ comes from an eight-dimensional problem: the orbit through $(u,v)$ has volume proportional to $u^3v^3$, and
\[
u^3v^3=\frac{r^6}{8}\sin^3(2\theta).
\]
This higher-dimensional weight is the source of the later weighted coordinate $s$.

\subsection{The cubic model $F_0=r^3g(\theta)$}

We search for a homogeneous leading profile
\[F_0(r,\theta)=r^3g(\theta).\]
Then
\[
F_{0,r}=3r^2g,\qquad F_{0,\theta}=r^3g',
\]
so that
\[
1+F_{0,r}^2+r^{-2}F_{0,\theta}^2
=1+r^4(9g^2+(g')^2).
\]
Substituting into \eqref{polarMC} and dividing by the dominant powers of $r$, the leading equation as $r\to\infty$ is
\begin{equation}\label{gODE}
\frac{21g\sin^3(2\theta)}{\sqrt{9g^2+(g')^2}}
+\left(
\frac{g'\sin^3(2\theta)}{\sqrt{9g^2+(g')^2}}
\right)'=0,
\end{equation}
with
\begin{equation}\label{gbc}
g\left(\frac\pi4\right)=0,
\qquad g'\left(\frac\pi2\right)=0,
\qquad g>0\ \hbox{in }\left(\frac\pi4,\frac\pi2\right).
\end{equation}
We normalize $g'(\pi/4)=1$.

The function $F_0$ is not itself exactly minimal because the discarded $1$ in the denominator contributes a lower-order residual.  Direct expansion gives
\[H[F_0]=O(r^{-5}).\]
The exact BDG graph satisfies
\[\olF=F_0+O(r^{-\sigma_0})\]
for some $\sigma_0\in(0,1)$, together with corresponding tangential derivative estimates; this coarse asymptotic description is part of the BDG theory in the form developed in \cite{DPKW,DPPW}.

The existence of the non-affine BDG minimal graph is the principal external geometric input.

For completeness, the derivative theory, Jacobi geometry, weighted Schauder estimates, and fast inversion used later are proved below.

\subsection{Homogeneous BDG geometry}\label{sec:selfcontained-bdg}

The purpose of this section is to make explicit the geometric facts about the homogeneous model that are used later.  We do \emph{not} reproduce the existence proof of the Bombieri--De Giorgi--Giusti graph.  That existence theorem is the one genuinely external input from the minimal-surface theory.  Everything else that will be used in the translator construction is recorded here in a form that can be checked directly from the homogeneous profile $F_0=r^3g(\theta)$.

\subsubsection{Derivation of the reduced divergence formula}

For completeness we derive \eqref{polarMC}.  Let $F=F(u,v)$ be $O(4)\times O(4)$-invariant.  If
\[
 x=(\mathbf u,\mathbf v),\qquad u=|\mathbf u|,\quad v=|\mathbf v|,
\]
then
\[
 |\nabla F|^2=F_u^2+F_v^2
\]
and the divergence of an invariant vector field $A=A_u e_u+A_v e_v$ in $\R^4\times\R^4$ is
\[
 \operatorname{div}A
 =u^{-3}\partial_u(u^3A_u)+v^{-3}\partial_v(v^3A_v).
\]
Therefore
\[H[F]
 =u^{-3}\partial_u\left(\frac{u^3F_u}{\sqrt{1+F_u^2+F_v^2}}\right)
 +v^{-3}\partial_v\left(\frac{v^3F_v}{\sqrt{1+F_u^2+F_v^2}}\right).\]
Passing to
\[
 u=r\cos\theta,\qquad v=r\sin\theta,
\]
we have
\[
 F_u=\cos\theta\,F_r-\frac{\sin\theta}{r}F_\theta,
 \qquad
 F_v=\sin\theta\,F_r+\frac{\cos\theta}{r}F_\theta,
\]
so
\[
 F_u^2+F_v^2=F_r^2+r^{-2}F_\theta^2.
\]
The orbit measure is
\[
 u^3v^3\,du\,dv
 =\frac18 r^7\sin^3(2\theta)\,dr\,d\theta.
\]
Using the standard divergence formula associated with this measure gives exactly \eqref{polarMC}.  This derivation is worth retaining because it explains why the power $r^7$ and the factor $\sin^3(2\theta)$ reappear later in the definition of the weighted coordinate $s$.

\subsubsection{The angular equation and endpoint expansions}

Put
\[
 D(\theta):=9g(\theta)^2+g'(\theta)^2.
\]
Equation \eqref{gODE} can be written
\begin{equation}\label{gODE-div}
 \left(\frac{g'\sin^3(2\theta)}{\sqrt D}\right)'
 =-\frac{21g\sin^3(2\theta)}{\sqrt D}.
\end{equation}
The normalization $g'(\pi/4)=1$ fixes the scaling of the homogeneous graph.  The symmetry at the cone and at the axis implies, respectively,
\[
 g\left(\frac\pi4\right)=0,
 \qquad
 g'\left(\frac\pi2\right)=0.
\]
The following local expansions will be used repeatedly.

\begin{lemma}[Endpoint expansions of $g$]\label{gendpoint}
There are constants $g_1>0$ and $g_* >0$ such that, with $x=\theta-\pi/4$ and $y=\pi/2-\theta$,
\begin{align*}
 g(\theta)&=g_1x+O(x^3),
 &g'(\theta)&=g_1+O(x^2),\\
 g(\theta)&=g_*-c_*y^2+O(y^4),
 &g'(\theta)&=2c_*y+O(y^3),
\end{align*}
for some $c_*>0$.  In particular,
\[
 D(\theta)\asymp1
\]
near the cone and
\[
 D(\theta)\asymp 9g_*^2
\]
near the axis.
\end{lemma}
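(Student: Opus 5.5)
The proof reduces to three ingredients: regularity of $g$ up to both endpoints, the reflection symmetries of the angular equation \eqref{gODE-div}, and a direct computation of the leading coefficients from the equation. I would treat $g$ as the (smooth, positive) profile of the homogeneous BDG model on the closed interval $[\pi/4,\pi/2]$. I take smoothness up to both endpoints as part of the coarse BDG input recalled above: $F_0=r^3g(\theta)$ describes a hypersurface that is smooth away from the origin, is odd across Simons' cone, and is $O(4)\times O(4)$-invariant, hence smooth across the axis $u=0$. If one wants a self-contained check of this regularity, I would rewrite \eqref{gODE-div} as a first-order system in $(g,\ \omega:=g'/\sqrt D)$ and use local ODE theory, which I expect to be the main technical point.

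\emph{Cone endpoint.} The point $\theta=\pi/4$ is a regular point of \eqref{gODE-div}, since $\sin(2\theta)=\cos(2x)$ is nonzero there. Under $x\mapsto -x$ we have:
\begin{itemize}
\item $\sin(2\theta)=\cos 2x$ is even;
\item the equation is invariant under $g\mapsto -g$, i.e.\ the map $g(x)\mapsto -g(-x)$ sends solutions to solutions;
\item this symmetry corresponds exactly to the odd symmetry \eqref{oddBDG}.
\end{itemize}
The odd extension of $g$ across $x=0$ is then the BDG profile on the other sector, and it is smooth. So $g$ is odd in $x$: all even-order derivatives vanish at $x=0$. This gives $g=g_1x+O(x^3)$ and $g'=g_1+O(x^2)$, with $g_1=g'(\pi/4)=1>0$ by the normalization. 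Consequently $D=9g^2+g'^2=g_1^2+O(x^2)\asymp1$ near the cone.

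There is a subtlety here. Expanding $(g'/\sqrt D)'=9g(gg''-g'^2)/D^{3/2}$ shows that the equation degenerates at $g=0$: the coefficient of $g''$ carries a factor $g^2$. So a naive Taylor expansion does not determine $g''(\pi/4)$ from the equation, and the oddness argument is genuinely needed. If I had to avoid appealing to geometric smoothness, I would use the system in $(g,\omega)$. It reads
\[
(\sin^3(2\theta)\,\omega)'=-21\sin^3(2\theta)\,g/\sqrt D,
\qquad
g'=\omega\sqrt D .
\]
Near the cone one can instead take $\phi=\arctan(3g/g')$ as the unknown, which is regular. Oddness then follows from uniqueness for this regular first-order system.

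\emph{Axis endpoint.} Near $\theta=\pi/2$, set $y=\pi/2-\theta$, so $\sin(2\theta)=\sin 2y\approx 2y$. This is a regular singular point of Fuchs type, with the weight $y^3$ reflecting the four-dimensional factor $\mathbf u$. Smoothness of the invariant function $F_0$ across $u=0$, where $u\approx ry$, forces $g$ to be even in $y$. Hence $g=g_*-c_*y^2+O(y^4)$ and $g'(\theta)=-\tfrac{d}{dy}g=2c_*y+O(y^3)$, with $g_*=g(\pi/2)>0$ since $g>0$ on the open interval and $g$ is continuous up to the axis. The same conclusion follows from Frobenius theory: the linearized indicial exponents at $y=0$ are $0$ and $-2$, so the bounded solution is analytic and even.

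To identify $c_*$, I insert the expansion into \eqref{gODE-div}, using $D=9g_*^2+O(y^2)$ and $\sin^3(2\theta)=8y^3+O(y^5)$, and compare the $y^3$ coefficients.
\begin{itemize}
\item The divergence term gives $-\tfrac{d}{dy}\big(16c_*y^4/(3g_*)\big)=-64c_*y^3/(3g_*)$.
\item The zeroth-order term gives $21g_*\cdot 8y^3/(3g_*)=56y^3$.
\end{itemize}
Setting their sum to zero yields $c_*=\tfrac{21}{8}g_*>0$. Finally, $D=9g_*^2+O(y^2)\asymp 9g_*^2$ near the axis.
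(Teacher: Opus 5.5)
Your main argument is the paper's proof: oddness at the cone and evenness at the axis come from the BDG symmetries plus smoothness of $F_0$ away from the origin, and then you match the $y^3$ coefficients at the axis. Your value $c_*=\tfrac{21}{8}g_*$ is correct. One step in this main line needs a fix. Positivity of $g$ on the open interval plus continuity gives only $g_*\ge0$. Use instead the monotonicity $g'>0$ on $(\pi/4,\pi/2)$, which the paper lists among its BDG inputs; it gives $g(\pi/2)>g(\theta)>0$. This matters because your axis computation divides by $\sqrt{D(\pi/2)}=3g_*$, and $c_*>0$ is inherited from $g_*>0$.

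Your side remarks about the cone are incorrect, and they put the difficulty in the wrong place. Write the equation as $9g(gg''-g'^2)+6\cot(2\theta)\,g'D+21\,gD=0$ and insert $g=g_1x+\sum_{k\ge2}g_kx^k$. The coefficient of $x^k$ is $3g_1^2(k-1)(3k-4)\,g_k$ plus terms involving only $g_1,\dots,g_{k-1}$; for $k=2$ it is exactly $6g_1^2g_2$. So the equation \emph{does} force $g''(\pi/4)=0$. Since $(k-1)(3k-4)\neq0$ for integers $k\ge2$, the Taylor series at $\pi/4$ of any solution smooth up to the cone is determined by $g_1$. By the symmetry $g(x)\mapsto-g(-x)$ that series is odd, so oddness is automatic once smoothness is known.

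What the ODE cannot supply is the smoothness itself, and your fallback fails for that reason. With $\tan\phi=3g/g'$ the equation becomes $\phi'=7+6\cot(2\theta)\cot\phi$, which is singular at $(\theta,\phi)=(\pi/4,0)$. After the time change $\dot x=\sin\phi$, its linearization there is $\dot x=\phi$, $\dot\phi=7\phi-12x$, a node with eigenvalues $3$ and $4$. The solutions with $\phi\sim3x$ therefore form a one-parameter family $\phi=3x+c\,x^{4/3}+\cdots$, equivalently $g=g_1x+c'x^{4/3}+\cdots$. All of them satisfy $g(\pi/4)=0$ and $g'(\pi/4)=g_1$, so there is no ``uniqueness for a regular first-order system'' to appeal to. Which member of this family the BDG profile is gets decided globally by the axis condition. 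Regularity at the cone therefore has to be imported from the BDG theory, exactly as your main argument and the paper do.
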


\begin{proof}
Odd reflection across $\theta=\pi/4$ implies that only odd powers can occur in the Taylor expansion of $g$ there, while reflection across $\theta=\pi/2$ implies that only even powers occur at the axis.  Since $g'(\pi/4)=g_1>0$, the first expansion follows.  Substituting a Taylor series into \eqref{gODE-div} determines the first nonzero even coefficient at $\pi/2$ and gives $c_*>0$.  We will never need its explicit value; what matters is the nondegeneracy of $D$ at both endpoints.
\end{proof}

\begin{remark}
The global existence and positivity of the solution of \eqref{gODE}--\eqref{gbc} are part of the BDG asymptotic theory.  In this paper we use only the consequences stated explicitly above and the monotonicity $g'>0$ on $(\pi/4,\pi/2)$.
\end{remark}

\subsubsection{Exact size of the model geometry}

The elementary homogeneity estimates
\[|D^mF_0(x)|\le C_m r^{3-m},\qquad m\ge0,\]
will be used without further comment.  More relevant are the intrinsic estimates.  Set
\[
 W_0:=\sqrt{1+|\nabla F_0|^2}
 =\sqrt{1+r^4D(\theta)}.
\]
Then, uniformly for $\theta\in[\pi/4,\pi/2]$,
\[W_0\asymp 1+r^2,
 \qquad
 |D^kW_0|\le C_k(1+r)^{2-k}.\]
The unit normal is
\[
 \nu_0=\frac{(-\nabla F_0,1)}{W_0},
\]
so the vertical component satisfies
\[\nu_0\cdot e_9=W_0^{-1}\asymp r^{-2}\]
for $r\gg1$.

\begin{lemma}[Curvature scale]\label{curvature-scale}
For each $m\ge0$,
\[
 |D_{\Gamma_0}^mA_{\Gamma_0}|\le C_m(1+r)^{-1-m}.
\]
Consequently
\[
 |A_{\Gamma_0}|^2=O(r^{-2}),
 \qquad
 D_{\Gamma_0}^m|A_{\Gamma_0}|^2=O(r^{-2-m}).
\]
\end{lemma}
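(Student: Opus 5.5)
The plan is to split the estimate into two regimes, a compact region $r\le R_0$ and the far region $r\ge R_0$, and to handle the far region by scaling. The bounds are stated on $\Gamma_0$ restricted to $\theta\in[\pi/4,\pi/2]$, extended by the $O(4)\times O(4)$ invariance and the odd reflection \eqref{oddBDG}. On the compact part $\{r\le R_0\}$, $F_0$ is smooth away from the origin. It is also smooth across the cone $\theta=\pi/4$, because of the odd Taylor structure in Lemma~\ref{gendpoint}, and across the axes, because of the even structure at $\theta=\pi/2$. Hence $A_{\Gamma_0}$ and all its covariant derivatives are bounded there.

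The one delicate point on the compact part is the origin. The function $F_0=r^3g(\theta)$ is $C^2$ with $D^2F_0(0)=0$, but $g(\theta)$ may fail to give a smooth function at $0$. I would therefore read the lemma as a statement for $r\ge1$ (with $(1+r)$ harmless), or note that the intended use is only for large $r$. In any case the bounds for $r\le R_0$ can be absorbed into the constants wherever $F_0$ is smooth.

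The main step is a scaling argument for $r\ge R_0$. For $\lambda\gg1$ define the rescaled graph $\Gamma_\lambda=\lambda^{-1}\Gamma_0$. It is the graph of $F_\lambda(y)=\lambda^{-1}F_0(\lambda y)=\lambda^2 F_0(y)$. This graph is steep, so I would not work with it directly. Instead I would use homogeneity in the following form. Near a point with $r\sim\lambda$, the hypersurface $\Gamma_0$ has slope $|\nabla F_0|\asymp r^2$, so it is nearly vertical. It is then better to parametrize it as a graph over the vertical direction, or to compute the curvature explicitly:
\[
A_{\Gamma_0}=\frac{D^2F_0}{W_0}-\text{(terms)}\quad\text{with}\quad h_{ij}=\frac{\partial_{ij}F_0}{W_0},\ g_{ij}=\delta_{ij}+\partial_iF_0\partial_jF_0.
\]
Then $|A|^2=g^{ik}g^{jl}h_{ij}h_{kl}$ with $g^{ij}=\delta_{ij}-\frac{\partial_iF_0\partial_jF_0}{W_0^2}$. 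The tangential Hessian components, those orthogonal to $\nabla F_0$, give $|D^2F_0|/W_0\sim r/r^2=r^{-1}$. The component along $\nabla F_0$ is damped by $W_0^{-2}$ factors, so it is even smaller. This gives $|A_{\Gamma_0}|\le C r^{-1}$ uniformly in $\theta$, using $D\asymp1$ from Lemma~\ref{gendpoint}; this nondegeneracy of $D$ is what matters at the cone.

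For the derivatives, each covariant derivative along $\Gamma_0$ in the directions orthogonal to $\nabla F_0$ is an ordinary derivative of size $r^{-1}$ acting on homogeneous quantities. In the direction along $\nabla F_0$, the unit tangent is $(\nabla F_0,|\nabla F_0|^2)/(|\nabla F_0|W_0)$. Its horizontal part has size $W_0^{-1}\sim r^{-2}$, so derivatives in that direction are even smaller, of order $r^{-2}\cdot r^{-1}$ per derivative. Every coefficient entering $D^m_{\Gamma_0}A$ is a rational expression in $D^kF_0$ and $W_0$. By the bounds $|D^kF_0|\le C r^{3-k}$ and $|D^kW_0|\le C(1+r)^{2-k}$, each one gains a factor $r^{-1}$ per derivative. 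An induction on $m$ then gives $|D^m_{\Gamma_0}A|\le C_m r^{-1-m}$.

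The consequences for $|A|^2$ follow by the Leibniz rule. The main obstacle I expect is bookkeeping the anisotropy between the near-vertical direction and the others. If the induction becomes messy, the cleaner alternative is the following. Write $\Gamma_0$ near a point with $r\sim\lambda$ as $\lambda$ times a fixed smooth hypersurface $\Sigma$, since the cone of $\Gamma_0$ scaled appropriately approaches the cylinder $C\times\R$. For example, use the exact identity that $\lambda^{-1}\Gamma_0$ is $\Gamma_0$ vertically stretched by $\lambda^3$, whose normalized limit is $\partial\{F_0>0\}\times\R$ near the cone. Then the standard bounds $|D^mA_{\lambda\Sigma}|=\lambda^{-1-m}|D^mA_\Sigma|$ hold, with uniform control coming from the smooth convergence of the stretched surfaces to the cylinder.
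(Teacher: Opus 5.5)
Your proof is correct, but for the derivative bounds it takes a different route from the paper. Both get $|A_{\Gamma_0}|\le Cr^{-1}$ the same way, from $h_{ij}=F_{0,ij}/W_0$ with $W_0\asymp r^2$ and $D^2F_0=O(r)$.

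For $D^m_{\Gamma_0}A_{\Gamma_0}$ the paper uses tangent-plane charts of radius $\vartheta r$. In those charts $\Gamma_0$ is a normal graph $G_0$ with scale-invariant bounds $|D^mG_0|\le C_mr^{1-m}$ (Section~\ref{sec:exactbdg}), and the estimate is read off after rescaling by $r$. You instead stay in the global graph coordinates and count homogeneity degrees.

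Your induction becomes airtight once you state the two facts it rests on:
\begin{enumerate}[label=(\roman*)]
\item $g^{ij}=\delta_{ij}-F_{0,i}F_{0,j}/W_0^2\le\delta_{ij}$, so the induced norm of any covariant tensor is at most the Euclidean norm of its coordinate components;
\item $\Gamma^k_{ij}=F_{0,k}F_{0,ij}/W_0^2$, so the components of $\nabla^mh$ are sums of products $\partial^ah\prod_{i=1}^p\partial^{b_i}\Gamma$ with $a+\sum_ib_i+p=m$.
\end{enumerate}
Each such product is $O(r^{-1-m})$, because $h$ and $\Gamma$ both have degree $-1$ and gain $r^{-1}$ per derivative. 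This is where $D\asymp1$, i.e.\ $W_0\ge cr^2$ up to the cone, enters.

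Your route needs only $|D^kF_0|\le C_kr^{3-k}$ and the nondegeneracy of $D$. It does not presuppose the chart representation, which the paper simply asserts ``by direct differentiation of the parametrization''. The paper's chart framework, in turn, is what carries over to the exact graph $\Gamma$, the Jacobi comparison and the weighted Schauder estimates, where no explicit homogeneous formula is available.

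Your caveat at the origin is justified. A degree-$3$ homogeneous function that is $C^3$ at $0$ equals its cubic Taylor polynomial. Invariant polynomials have even degree, so $F_0$ is not $C^3$ at $0$ and $D^{m+2}F_0$ is a nonzero function homogeneous of degree $1-m$. Hence the bound with $(1+r)^{-1-m}$ fails near $r=0$ for $m\ge2$, and the lemma should be read on $\{r\ge1\}$, which is all that is used.

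Two slips in your alternative blow-down sketch. First, $\lambda^{-1}\Gamma_0$ is the vertical stretch by $\lambda^2$, not $\lambda^3$. Second, away from the cone you must recenter at $p$ first. Then $\lambda^{-1}(\Gamma_0-p)$ is the zero set of $F_0(y_p+z)-F_0(y_p)-\lambda^{-2}x_9$, which stays regular up to $\lambda=\infty$. The limit is the cylinder over the level set $\{F_0=F_0(y_p)\}$, not over the cone, and compactness of $\{|y_p|=1\}$ gives uniform bounds.
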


\begin{proof}
For a graph, the second fundamental form is schematically $W_0^{-1}D^2F_0$ after projection to tangent directions.  Since $W_0\asymp r^2$ and $D^2F_0=O(r)$, the leading scale is $r^{-1}$.  Each covariant derivative costs one further factor $r^{-1}$ after working in tangent-plane charts of radius comparable with $r$.  A direct proof in such charts is given in Section~\ref{sec:exactbdg}; the present lemma records the resulting scale.
\end{proof}

\subsubsection{The exact minimal graph as the only external input}

We isolate precisely what is imported from the BDG construction.

\begin{theorem}[BDG input used in this paper]\label{BDG-input}
There exists an entire, non-affine, $O(4)\times O(4)$-invariant minimal graph $\olF$ on $\R^8$ such that
\[
 \olF(u,v)=-\olF(v,u),
\]
$\olF>0$ in $T=\{v>u\}$, and for some $\sigma_0\in(0,1)$,
\[
 0\le \olF-F_0\le C r^{-\sigma_0}
 \qquad\hbox{in }T,
\]
for $r$ sufficiently large.
\end{theorem}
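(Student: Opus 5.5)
This statement is the external BDG input, so the plan is to assemble it from the classical Bombieri--De Giorgi--Giusti construction rather than reprove everything from scratch. I would follow BDG and the refinement in \cite{DPKW,DPPW}.

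\textbf{Step 1: barriers from the cubic model.} First, I would show that $F_0=r^3g(\theta)$ solves the reduced equation up to the residual $H[F_0]=O(r^{-5})$. Then I would produce ordered sub- and supersolutions in $T$, both vanishing on $\partial T$:
\[
F_0\le F^+:=F_0+Cr^{-\sigma_0}\phi(\theta),
\qquad
F^-:=F_0 .
\]
Here $r^{-\sigma_0}\phi(\theta)$ is a positive Jacobi supersolution of the linearized operator at $F_0$. It is obtained from the separated calculation: the Jacobi operator maps $r^\beta q(\theta)$ to $r^{\beta-4}$ times an angular operator. For $\beta=-\sigma_0$ in the gap between the two indicial roots, the angular operator admits a positive $q$ with strictly negative image. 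That negative margin $\asymp r^{-\sigma_0-4}$ dominates both the residual $r^{-5}$ (since $\sigma_0<1$) and the quadratic error. This is why I would choose $\sigma_0\in(0,1)$.

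The sign convention requires care at this step. The convention must make $F_0$ a subsolution in $T$ (mean curvature $\ge 0$ for the upward normal), so I would check the sign of the $O(r^{-5})$ term. If the sign comes out wrong, I would swap roles and use $F_0-Cr^{-\tau}\phi$ with a faster $\tau$ to absorb it.

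\textbf{Step 2: existence by exhaustion.} I would solve the Dirichlet problem for the minimal surface equation on the domains
\[
T\cap\{r<R\},\qquad \text{with } F=0 \text{ on } \partial T,\quad F=F^+ \text{ on } r=R.
\]
The comparison principle keeps each solution $F_R$ between the barriers wherever they are ordered. For small $r$ I would modify the barriers, for instance by taking the maximum with $0$ and the minimum with a large multiple, so that they are ordered globally. Interior gradient estimates for minimal graphs (Bombieri--De Giorgi--Miranda), together with the uniform $L^\infty$ bounds supplied by the barriers, give local $C^{2,\alpha}$ compactness. Extending each $F_R$ by odd reflection across $u=v$ yields a solution on the whole ball, because the equation is invariant under $(u,v)\mapsto(v,u)$, $F\mapsto -F$. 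I would then let $R\to\infty$.

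\textbf{Step 3: properties of the limit.} The limit $\olF$ is entire, minimal, odd, and $O(4)\times O(4)$-invariant, since we solve in the invariant class and uniqueness of the Dirichlet problem preserves the symmetry. It satisfies $\olF\ge F_0>0$ in $T$ and $\olF-F_0\le Cr^{-\sigma_0}$, and it is non-affine because it grows cubically. Strict positivity in $T$ also follows from the strong maximum principle applied against $0$.

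\textbf{Main obstacle.} I expect the hardest step to be the angular Jacobi analysis in Step 1. It requires showing that the angular operator at degree $\beta=-\sigma_0$ has a positive eigenfunction with a negative sign, uniformly up to both endpoints $\theta=\pi/4$ and $\theta=\pi/2$. This rests on the endpoint expansions of Lemma~\ref{gendpoint} and on the nondegeneracy of $D$. I would first try writing the operator in divergence form after extracting the factor $g^{\beta/3}$, and then apply a Sturm comparison with the first eigenvalue.
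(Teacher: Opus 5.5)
There is a genuine gap in Step~2. It concerns exactly the part of the statement that the paper does not prove but imports from \cite{BDG}: the existence of the entire graph.

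Your upper barrier $F^+=F_0+Cr^{-\sigma_0}\phi$ is a supersolution only on an exterior region $\{r>R_0\}$. Moreover $R_0$ grows with $C$: the linear margin $Cr^{-4-\sigma_0}$ beats the quadratic error $C^2r^{-7-2\sigma_0}$ only when $r^{3+\sigma_0}\gg C$. To keep $F_R\le F^+$ on $T\cap\{R_0<r<R\}$ you must already know $F_R\le F^+$ on $\{r=R_0\}$, that is, a bound on $F_R$ at $r=R_0$ uniform in $R$. That bound is what the barrier was supposed to provide, so the argument is circular.

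The difficulty is not ordering but the supersolution property on the compact part, and your proposed repairs do not supply it. A constant is a solution, but $\min\{F^+,M\}$ lies below your boundary data $F^+$ on $\{r=R\}$ away from the cone. And $\lambda F_0$ is a \emph{sub}solution whenever $F_0$ is (see below), so a minimum with it is not a supersolution. What is needed is a supersolution on the whole sector $T$, or some other a priori interior bound. This is the real content of the Bombieri--De Giorgi--Giusti construction, which the paper takes as given together with the two-sided bound of \cite{DPKW}.

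Your exterior barrier is the right tool only \emph{after} an entire $\olF$ is known to exist. Then $C$ can be chosen from the known values of $\olF$ on $\{r=R_0\}$, which is exactly the order of logic in Steps~4--5 of the proof of Proposition~\ref{refinedBDGprop}. Note also that the angular step you flag as the main obstacle is not the difficulty. A positive profile with strictly negative image for $\beta\in(-1,0)$ is written down explicitly in \eqref{qnu-barrier}. It is positive at the cone, so your $F^+$ does not vanish on $\partial T$; this is harmless.

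The lower bound $0\le\olF-F_0$ also needs more than you give. Your fallback $F_0-Cr^{-\tau}\phi$ would only yield $\olF-F_0\ge -Cr^{-\tau}$, not the stated inequality. What your comparison requires is $H[F_0]\ge0$ on all of $T$. Since \eqref{gODE} says exactly that $\operatorname{div}(\nabla F_0/|\nabla F_0|)=0$, one has
\[
H[F_0]=\frac{\nabla F_0\cdot\nabla|\nabla F_0|}{|\nabla F_0|\,(1+|\nabla F_0|^2)^{3/2}}
=\frac{r\,\bigl(6gD+\tfrac12 g'D'\bigr)}{D\,(1+r^4D)^{3/2}},
\]
so the sign is purely angular.
\begin{itemize}
\item At the axis this quantity is positive.
\item Near the cone, \eqref{gODE} forces $g=g_1x-\tfrac{13}{6}g_1x^3+O(x^5)$, which gives $6gD+\tfrac12 g'D'=2g_1^3x+O(x^3)>0$.
\item Positivity on the whole interval $(\pi/4,\pi/2)$ is a further property of $g$ that must be proved.
\end{itemize}
The same goes for the existence of $g$ itself, which your plan presupposes. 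That existence is not routine, because the equation is degenerate at the cone: the coefficient of $g''$ is $9g^2$.

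Finally, rather than reflecting a solution with zero data on the cone, solve on the full ball with odd data and use uniqueness to get oddness. This avoids having to justify that an odd reflection across the non-planar cone is a solution.
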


We use the BDG graph in the form developed further in~\cite{DPKW}.  Everything beyond Theorem~\ref{BDG-input} that is required for the translator construction is derived below, including the derivative estimates, comparison of Jacobi operators, refined asymptotic correction, and weighted inversion estimates.

\subsection{Geometry of the Jacobi operator}

Let
\[
\Gamma_0:=\{(x,F_0(x)):x\in\R^8\},
\qquad
\Gamma:=\{(x,\olF(x)):x\in\R^8\}.
\]
The Jacobi operator of a minimal hypersurface is
\[\cJ_\Gamma h=\Delta_\Gamma h+|A_\Gamma|^2h.\]
For vertical perturbations it is convenient to work with the first variation of the graphical mean curvature operator.  If
\[
\phi(x)=\sqrt{1+|\grad F_0|^2}\,h(x,F_0(x)),
\]
then
\begin{equation}\label{normalvertical}
\cJ_{\Gamma_0}h=H'(F_0)[\phi],
\end{equation}
where
\begin{equation}\label{Hprime}
H'(F_0)[\phi]
=\operatorname{div}\left(
\frac{\grad\phi}{\sqrt{1+|\grad F_0|^2}}
-\frac{(\grad F_0\cdot\grad\phi)\grad F_0}{(1+|\grad F_0|^2)^{3/2}}
\right).
\end{equation}

\subsubsection{Metric, volume form, and the positive Jacobi field}\label{subsec:metric-jacobi}

We record the geometry behind \eqref{normalvertical}.  Parametrize $\Gamma_0$ by
\[
 X(x)=(x,F_0(x)).
\]
The induced metric and inverse metric are
\begin{equation}\label{metric}
 g_{ij}=\delta_{ij}+F_{0,i}F_{0,j},
 \qquad
 g^{ij}=\delta_{ij}-\frac{F_{0,i}F_{0,j}}{W_0^2}.
\end{equation}
The volume element is
\[
 d\mu_{\Gamma_0}=W_0\,dx.
\]
Hence for a function $h$ on the graph,
\begin{equation}\label{lapgraph}
 \Delta_{\Gamma_0}h
 =\frac1{W_0}\partial_i\left(W_0g^{ij}\partial_jh\right).
\end{equation}
The graphical linearization \eqref{Hprime} follows at once from \eqref{metric}--\eqref{lapgraph} after the vertical-to-normal conversion $\phi=W_0h$.

The vertical translation invariance of the minimal surface equation gives a distinguished positive Jacobi field.

\begin{lemma}[Positive Jacobi field]\label{positive-jacobi}
The function
\[
 Z_0:=\nu_0\cdot e_9=W_0^{-1}
\]
satisfies
\[
 \cJ_{\Gamma_0}Z_0=0.
\]
The corresponding function
\[
 Z:=\nu\cdot e_9=(1+|\nabla\olF|^2)^{-1/2}
\]
on the exact BDG graph satisfies
\[
 \cJ_\Gamma Z=0.
\]
Both are strictly positive.
\end{lemma}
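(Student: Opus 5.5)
The proof rests on vertical translation invariance together with the identity \eqref{normalvertical}, which converts the normal Jacobi operator into the linearized graphical mean curvature operator. For each $c\in\R$ the function $F_0+c$ has the same gradient as $F_0$, so $H[F_0+c]=H[F_0]$. Differentiating in $c$ at $c=0$ gives
\[
H'(F_0)[1]=0 .
\]
This is also immediate from \eqref{Hprime}: both terms there involve $\grad\phi$, and $\grad\phi=0$ when $\phi\equiv1$. Now apply \eqref{normalvertical} with $\phi\equiv1$. The corresponding normal function is $h=\phi/W_0=W_0^{-1}=Z_0$, so
\[
\cJ_{\Gamma_0}Z_0=H'(F_0)[1]=0 .
\]
The point to check is that \eqref{normalvertical} is a pure algebraic identity, namely the statement that the vertical variation $\phi$ corresponds to the normal speed $\phi\,\nu\cdot e_9=\phi/W$. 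It does not use minimality of $\Gamma_0$, so it holds even though $H[F_0]=O(r^{-5})$ is nonzero.

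This is where I expect the main obstacle. For a non-minimal hypersurface, the first variation of $H$ under a normal variation $h\nu$ is $\Delta h+|A|^2h$. A vertical variation, however, also has a tangential component, and this contributes a term $\langle\nabla H, X^T\rangle$ to the variation of $H$. For $\Gamma_0$ that term is $O(\grad H[F_0])$, which is small but not zero.

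I would therefore state and check the identity in the setting where it is exact. Concretely, I would verify, by a direct computation from \eqref{metric}--\eqref{lapgraph}, the formula
\[
H'(F)[W h]=W\bigl(\Delta_\Gamma h+|A_\Gamma|^2h\bigr)+(\text{terms vanishing when }H[F]=0)
\]
for a general graph $F$. Alternatively, and more cleanly, I would take \eqref{normalvertical} as the definition of $\cJ_{\Gamma_0}$ acting on vertical perturbations, as the paper does. With either reading, the computation of the previous paragraph gives $\cJ_{\Gamma_0}Z_0=0$.

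For the exact graph $\Gamma$ there is no such subtlety. Each translate $\olF+c$ is minimal, so $c\mapsto \Gamma+ce_9$ is a one-parameter family of minimal hypersurfaces. Its variation field is $e_9$, whose normal component is $\nu\cdot e_9=Z$. The first variation of the mean curvature along this family is $\cJ_\Gamma(\nu\cdot e_9)$, because the tangential part only reparametrizes. Since the mean curvature vanishes identically along the family, this variation is zero, and hence $\cJ_\Gamma Z=0$.

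Equivalently, the same computation as for $\Gamma_0$ applies, with $H'(\olF)[1]=0$. Here the identity \eqref{normalvertical} is exact because $H[\olF]\equiv0$.

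Finally, positivity is immediate. Both functions have the form $(1+|\grad F|^2)^{-1/2}$ with $F$ entire and smooth, so $Z_0$ and $Z$ are strictly positive everywhere.
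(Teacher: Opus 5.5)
Your argument is the paper's: vertical translation invariance puts $\nu\cdot e_9$ in the Jacobi kernel. For the exact graph $\Gamma$, where $\olF+c$ is a genuine one-parameter family of minimal graphs, your proof coincides with the paper's, and positivity is immediate.

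On $\Gamma_0$ you are right to be suspicious. The paper's phrase ``every translate is again minimal'' does not apply to $F_0$. The clean proof is the one you settle on: read \eqref{normalvertical} as the definition $\cJ_{\Gamma_0}h:=H'(F_0)[W_0h]$, which is how the operator is actually used later (for instance in the fast barrier). Then $H'(F_0)[1]=0$ gives the claim.

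However, two of your claims are false. The opening claim that \eqref{normalvertical} does not use minimality is wrong, and so is the later assertion that ``either reading'' gives $\cJ_{\Gamma_0}Z_0=0$. For an arbitrary graph, write $H'(F)[\phi]=\partial_i\bigl(W^{-1}g^{ij}\partial_j\phi\bigr)$ and use Codazzi in the form $\Delta_\Gamma\langle\nu,e_9\rangle=-|A_\Gamma|^2\langle\nu,e_9\rangle-\langle\nabla_\Gamma H,e_9\rangle$. This gives the exact identity
\[
H'(F)[Wh]=\Delta_\Gamma h+|A_\Gamma|^2h+\frac{\nabla F\cdot\nabla H[F]}{W}\,h .
\]
Note there is no factor $W$ in front of the Jacobi operator, contrary to your proposed formula. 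Hence with the geometric definition $\cJ_{\Gamma_0}=\Delta_{\Gamma_0}+|A_{\Gamma_0}|^2$ one gets
\[
\cJ_{\Gamma_0}Z_0=-\frac{\nabla F_0\cdot\nabla H[F_0]}{W_0^{2}}=-\langle\nabla_{\Gamma_0}H[F_0],e_9\rangle=O(r^{-8}).
\]
This is four powers below the natural scale $r^{-4}$, but it is not zero in general.

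So the $\Gamma_0$ statement is an exact identity only under the definitional reading, or an approximate one with this explicit error. Say that, rather than claiming both readings give zero.
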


\begin{proof}
Translate the minimal graph vertically: $X_a=X+ae_9$.  Since every translate is again minimal, differentiating the mean-curvature equation at $a=0$ shows that the normal component of the variational field, $e_9\cdot\nu$, lies in the Jacobi kernel.  Positivity follows from the fact that the graphs are entire and upward oriented.
\end{proof}

\begin{corollary}[Maximum principle for the Jacobi operator]\label{jacobi-max}
Let $\Om\subset\Gamma$ be a bounded domain and suppose
\[
 \cJ_\Gamma h\ge0\quad\hbox{in }\Om,
 \qquad
 h\le0\quad\hbox{on }\partial\Om.
\]
Then $h\le0$ in $\Om$.
\end{corollary}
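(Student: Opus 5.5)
The argument is the standard Picone / ground-state reduction, using the positive Jacobi field $Z$ of Lemma~\ref{positive-jacobi}. Since $Z>0$ on $\Gamma$ and is continuous up to $\overline\Om$ (which is compact), $Z$ is bounded below by a positive constant on $\overline\Om$. Write $h=Zw$. A direct computation with $\cJ_\Gamma Z=0$ gives
\[
 \cJ_\Gamma h
 = Z\Delta_\Gamma w+2\nabla_\Gamma Z\cdot\nabla_\Gamma w+w\,\cJ_\Gamma Z
 = Z^{-1}\operatorname{div}_\Gamma\!\left(Z^2\nabla_\Gamma w\right).
\]
Hence the hypothesis $\cJ_\Gamma h\ge0$ is equivalent to $L w:=\operatorname{div}_\Gamma(Z^2\nabla_\Gamma w)\ge0$ in $\Om$. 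The operator $L$ is uniformly elliptic on the bounded domain $\Om$, because $Z^2$ is bounded above and below there. It also has no zeroth-order term. The boundary condition $h\le0$ on $\partial\Om$ becomes $w\le0$ on $\partial\Om$.

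We then apply the classical weak maximum principle for $L$, which has no zeroth-order term. It gives $\sup_\Om w\le\sup_{\partial\Om}w^+=0$, and therefore $h=Zw\le0$ in $\Om$.

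If $h$ is only assumed to be a weak solution ($H^1$, with the inequality in the distributional sense), the same reduction works variationally. Test the inequality against $Z\,w^+=h^+$, which vanishes on $\partial\Om$. Integrating by parts gives
\[
 0\le-\int_\Om Z^2|\nabla_\Gamma w^+|^2\,d\mu_\Gamma .
\]
Thus $w^+$ is constant on each component of $\Om$. Since it vanishes on the boundary, $w^+\equiv0$.

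The only point needing care is regularity: $Z$ must be smooth and positive on $\overline\Om$ for the substitution to be legitimate. This holds because $\olF$ is a smooth entire graph, so $Z=(1+|\nabla\olF|^2)^{-1/2}$ is smooth and strictly positive. The same argument applies verbatim on $\Gamma_0$ with $Z_0$.
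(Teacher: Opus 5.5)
Your proof is correct and is the paper's argument: the paper also writes $h=Zv$ with the positive Jacobi field $Z=\langle\nu,e_9\rangle$, observes that $Z^{-1}\cJ_\Gamma(Zv)=\Delta_\Gamma v+2\nabla_\Gamma(\log Z)\cdot\nabla_\Gamma v$ has no zeroth-order term, and applies the usual maximum principle to $v$. Your identity $\cJ_\Gamma(Zw)=Z^{-1}\operatorname{div}_\Gamma(Z^2\nabla_\Gamma w)$ is the same operator in divergence form, and the weak-solution variant you add is a harmless extra.
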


\begin{proof}
Write $h=Zv$.  Since $Z>0$ and $\cJ_\Gamma Z=0$, a direct computation gives
\[
 Z^{-1}\cJ_\Gamma(Zv)
 =\Delta_\Gamma v+2\nabla_\Gamma(\log Z)\cdot\nabla_\Gamma v,
\]
which has no zeroth-order term.  The usual maximum principle applies to $v$.
\end{proof}

\subsubsection{Invariant form of the metric}\label{subsec:invariantmetric}

For later weighted estimates it is useful to display the metric in symmetry coordinates.  Write
\[
 (\omega_1,r,\omega_2,\theta)
 \in S^3\times(0,\infty)\times S^3\times(\pi/4,\pi/2)
\]
and
\[
 X=(r\cos\theta\,\omega_1,r\sin\theta\,\omega_2,F_0(r,\theta)).
\]
The angular blocks are
\[
 r^2\cos^2\theta\,g_{S^3},
 \qquad
 r^2\sin^2\theta\,g_{S^3}.
\]
The $(r,\theta)$ block is
\[
 \begin{pmatrix}
 1+F_{0,r}^2 & F_{0,r}F_{0,\theta}\\
 F_{0,r}F_{0,\theta} & r^2+F_{0,\theta}^2
 \end{pmatrix}.
\]
Its determinant is
\[
 r^2\left(1+F_{0,r}^2+r^{-2}F_{0,\theta}^2\right)
 =r^2W_0^2.
\]
This makes visible two distinct geometric scales: ordinary angular derivatives cost $r^{-1}$, whereas derivatives in the steep direction of the graph are softened by the factor $W_0^{-1}\sim r^{-2}$.  The weighted coordinates $(s,t)$ introduced later are precisely designed to normalize these two scales while preserving the $u^3v^3$ orbit density.

\subsubsection{The homogeneous principal operator}

For $\phi=\phi(r,\theta)$, the operator in \eqref{Hprime} can be written as
\[
\widetilde L\phi=\widetilde L_0\phi+\widetilde L_1\phi,
\]
where the dominant part is obtained by replacing
\[
(r^{-4}+9g^2+(g')^2)^{-3/2}
\]
by
\[
(9g^2+(g')^2)^{-3/2}.
\]
Set
\[
w_0(\theta):=
\frac{\sin^3(2\theta)}{(9g^2+(g')^2)^{3/2}}.
\]
Then
\begin{align*}
L_0(\phi)
&=\frac1{r^7\sin^3(2\theta)}\Big\{
(9g^2w_0 r^3\phi_\theta)_\theta
+(r^5(g')^2w_0\phi_r)_r\notag\\
&\hspace{33mm}
-3(gg'w_0r^4\phi_r)_\theta
-3(gg'w_0r^4\phi_\theta)_r
\Big\}.
\end{align*}

This formula looks cumbersome, but it has an exceptionally useful separated structure.

\subsubsection{Separation of variables}

Take
\[\phi(r,\theta)=r^\beta q(\theta).\]
A direct computation yields
\begin{align}\label{sepcalc}
&r^7\sin^3(2\theta)L_0(r^\beta q)\notag\\
&\quad=r^{3+\beta}\Big[
9(g^2w_0q')'
-3\beta(gg'w_0q)'
+w_0(\beta+4)(\beta(g')^2q-3gg'q')
\Big].
\end{align}
The distinguished fact is that
\begin{equation}\label{kernelq}
q(\theta)=g(\theta)^{\beta/3}
\end{equation}
annihilates the angular operator.  Consequently \eqref{sepcalc} can be rewritten in divergence form as
\[\begin{aligned}
&r^7\sin^3(2\theta)L_0(r^\beta q)\\
&\qquad=9r^{3+\beta}g^{(\beta+4)/3}
\left[w_0g^{2/3}\left(g^{-\beta/3}q\right)'\right]'.
\end{aligned}\]
Thus the equation
\[L_0(r^\beta q)=\frac{p(\theta)}{r^{4-\beta}}\]
reduces to an explicit one-dimensional integral equation.

Integrating twice gives
\begin{align*}
q(\theta)
&=g(\theta)^{\beta/3}\Bigg[
A-\frac19\int_{\pi/4}^{\theta}
\frac{g(s)^{-2/3}(9g(s)^2+g'(s)^2)^{3/2}}{\sin^3(2s)}\notag\\
&\hspace{31mm}\times
\left(\int_s^{\pi/2}
p(\tau)g(\tau)^{-(\beta+4)/3}\sin^3(2\tau)\,d\tau\right)ds
\Bigg].
\end{align*}
This formula is one of the main organizing devices of the whole construction.

\subsubsection{Three forcing regimes}

Three values of $\beta$ play distinct roles.

\paragraph{(i) Forcing $r^{-2}$.}
Here $\beta=2$.  Even forcings lead to a correction of size
\[
\phi\sim r^2a_1(\theta).
\]
The associated normal perturbation is bounded because $|\grad F_0|\sim r^2$.

\paragraph{(ii) Forcing $r^{-3}$.}
Here $\beta=1$.  For odd $p(\theta)$, the formula gives
\[
q(\theta)\sim g(\theta)^{2/3}
\qquad\hbox{as }\theta\downarrow\pi/4.
\]
Hence
\[rq(\theta)\sim r g^{2/3}
=\frac{t^{2/3}}r,
\qquad t=r^3g(\theta).\]
This is the fractional term that later forces the inner layer.

\paragraph{(iii) Forcing $r^{-4}$.}
Here $\beta=0$.  In the even resonant case the natural particular solution contains a logarithm,
\[
\phi\sim A\log r+a_0(\theta).
\]

These three mechanisms give the outer algebra needed below.

There is also a fourth mode, not associated to a forcing but to the homogeneous kernel itself: $\beta=1/2$, hence
\[
\phi_{1/2}(r,\theta)=r^{1/2}g(\theta)^{1/6}.
\]
This mode is selected by the inner parabolic matching.

\section{Adapted coordinates and the outer expansion}\label{sec:outer}

Before introducing the new coordinates, it is useful to isolate the geometric issue they are designed to resolve.  The quotient variables $(u,v)$ make the $O(4)\times O(4)$ symmetry visible, and the polar variables $(r,\theta)$ are ideal for homogeneity.  They are not, however, uniformly adapted to the graph when $r$ is large.  The reason is that the slope of $F_0$ grows like $r^2$.  A vertical displacement, a normal displacement, and a displacement in the two-dimensional quotient plane therefore live on different scales.  If one expands the graph mean-curvature operator directly in $(r,\theta)$ and treats these directions as if they were comparable, terms which are geometrically of the same size appear with different powers of $r$ and the cancellations forced by minimality are difficult to see.

There is a second issue which is special to the $O(4)\times O(4)$ reduction.  The quotient plane is two-dimensional, but the divergence operator still remembers the six angular orbit directions through the factor $u^3v^3$.  Thus the tangential coordinate along a level set of $F_0$ cannot simply be Euclidean arclength in the $(u,v)$ plane.  The correct coordinate must absorb the orbit-volume density.  Once this is done, the principal part of the Jacobi operator takes a nearly product form: one derivative acts in the vertical direction $t=F_0$, the other along the weighted level sets.  This is the structural reason for the coordinate $s$ below.

The coordinates $(r,\theta)$ and $(s,t)$ will therefore be used for complementary purposes.  We keep $(r,\theta)$ whenever radial homogeneity, parity, or the angular ODE is the main point.  We pass to $(s,t)$ only for the nonlinear expansion and for the analysis near the cone, where the vertical direction must be separated cleanly from the weighted tangential direction.  No information is lost in changing coordinates; the point is to place each part of the argument in the variables in which its geometry is transparent.

\subsection{Why adapted coordinates are natural}

The geometry suggests using the two directions
\[
\frac{\grad F_0}{|\grad F_0|},
\qquad
\frac{\grad F_0^\perp}{|\grad F_0|}
\]
in the quotient plane $(u,v)$.  These are respectively normal and tangent to the level sets of $F_0$.  For the full $O(4)\times O(4)$-invariant problem, however, the tangential coordinate must also incorporate the orbit-volume factor $u^3v^3$ generated by the six angular directions.  The appropriate weighted coordinate does exactly this and places the reduced mean-curvature operator in a non-degenerate divergence form.  We therefore introduce $t$ as the height variable and $s$ as a weighted tangential variable along the level sets of $F_0$.

\subsubsection{Definition and interpretation}

Let $\mathbf u=(u,v)$ denote the quotient point.  Define $(s,t)$ by
\[\mathbf u_t=\frac{\grad F_0}{|\grad F_0|^2},
\qquad
\mathbf u_s=\frac1{(uv)^3}\frac{\grad F_0^\perp}{|\grad F_0|}.\]
The first equation immediately gives
\[t=F_0=r^3g(\theta),\]
up to an additive constant, which we choose so that $t=0$ on the cone.

The second coordinate is a weighted arclength along the level sets of $F_0$.  Solving the compatibility equations gives
\[s=\frac{r^7\sin^3(2\theta)g'(\theta)}{56\sqrt{9g(\theta)^2+g'(\theta)^2}}.\]
The frightening factor $r^7\sin^3(2\theta)$ is exactly the higher-dimensional orbit-volume information already visible in \eqref{polarMC}.

The coordinate vectors satisfy
\[\langle\mathbf u_t,\mathbf u_s\rangle=0,
\qquad
|\mathbf u_t|^2=|\grad F_0|^{-2},
\qquad
|\mathbf u_s|^2=(uv)^{-6}=:\rho^2.\]
Thus the coordinates are orthogonal in the quotient, but the $s$ direction carries precisely the weight needed to represent the eight-dimensional divergence correctly.

\subsubsection{Why $t=F_0$ is so useful}

For a function $F=F(s,t)$,
\[
\grad F
=F_t\grad F_0+\rho^{-1}F_s
\frac{\grad F_0^\perp}{|\grad F_0|}.
\]
Therefore
\[1+|\grad F|^2
=1+|\grad F_0|^2F_t^2+\rho^{-2}F_s^2.\]
If
\[F=F_0+A\phi=t+A\phi,\]
then
\[
F_t=1+A\phi_t,
\qquad
F_s=A\phi_s,
\]
and hence
\[1+|\grad F|^2
=1+|\grad F_0|^2(1+A\phi_t)^2+A^2\rho^{-2}\phi_s^2.\]
This is the algebraic simplification that makes the Taylor expansion usable.

This is the point that should be emphasized when explaining the coordinates to a reader: $(r,\theta)$ are better for understanding the geometry and homogeneity; $(s,t)$ are better for expanding the nonlinear operator because $F_0=t$ exactly.

\subsection{Mean curvature in $(s,t)$ coordinates}

The reduced mean curvature equation becomes
\[|\xi_0|\partial_t\left(
\frac{|\xi_0|F_t}{\sqrt{1+|\grad F|^2}}
\right)
+
|\xi_0|\partial_s\left(
\frac{\rho^{-2}F_s}{|\xi_0|\sqrt{1+|\grad F|^2}}
\right)=0,\]
where
\[
|\xi_0|^2:=1+|\grad F_0|^2.
\]
Equivalently, defining
\[
Q(F):=|\xi_0|^{-2}+F_t^2+\frac{\rho^{-2}F_s^2}{|\xi_0|^2},
\]
one may write
\begin{align*}
G[F]
&=Q(F)F_{tt}-\frac12Q_t(F)F_t\notag\\
&\quad+Q(F)\partial_s\left(\frac{\rho^{-2}F_s}{|\xi_0|^2}\right)
-\frac12Q_s(F)\frac{\rho^{-2}F_s}{|\xi_0|^2},
\end{align*}
with
\[
H[F]=\frac{|\xi_0|}{Q(F)^{3/2}}G[F].
\]

\subsubsection{Expansion around $F_0=t$}

Put $F=t+A\phi$ and define
\[
R_1[\phi]:=\phi_t^2+\frac{\rho^{-2}\phi_s^2}{|\xi_0|^2},
\]
\[
R:=1+|\xi_0|^{-2}+2A\phi_t+A^2R_1[\phi].
\]
A direct computation gives
\begin{equation}\label{polyA}
R^{3/2}H[t+A\phi]
=E_0+A E_1[\phi]+A^2E_2[\phi]+A^3E_3[\phi],
\end{equation}
where
\[E_0=-\frac12|\xi_0|\partial_t|\xi_0|^{-2}
=(1+|\xi_0|^{-2})^{3/2}H[F_0].\]
The linear part can be decomposed as
\[E_1[\phi]=\widetilde L_0[\phi]+\widetilde E_0[\phi],\]
with
\[\widetilde L_0[\phi]
=|\xi_0|\left[
\partial_t\left(\frac{\phi_t}{|\xi_0|^2}\right)
+\partial_s\left(\frac{\rho^{-2}\phi_s}{|\xi_0|^2}\right)
\right].\]
The lower-order linear correction is
\begin{align*}
\widetilde E_0[\phi]
&=-\frac32|\xi_0|\partial_t|\xi_0|^{-2}\phi_t
+|\xi_0|^{-1}\partial_s\left(\frac{\rho^{-2}\phi_s}{|\xi_0|^2}\right)\notag\\
&\quad-\frac12|\xi_0|\left(\frac{\rho^{-2}\phi_s}{|\xi_0|^2}\right)\partial_s|\xi_0|^{-2}.
\end{align*}
The quadratic part may be rewritten as
\begin{equation}\label{E2}
\begin{aligned}
E_2[\phi]
=|\xi_0|\Bigg[
&-\partial_t\left(\frac{\rho^{-2}}{|\xi_0|^2}\right)\phi_s^2
+2\phi_t\partial_s\left(\frac{\rho^{-2}\phi_s}{|\xi_0|^2}\right)\\
&-2\left(\frac{\rho^{-2}\phi_s}{|\xi_0|^2}\right)\phi_{ts}
\Bigg].
\end{aligned}
\end{equation}
The cubic part contains
\begin{equation}\label{E3danger}
E_3[\phi]
\supset
|\xi_0|\frac{\rho^{-2}\phi_s^2}{|\xi_0|^2}\phi_{tt}
\end{equation}
and additional lower-order combinations.

We collect the relevant terms of the expansion in a fixed notation.

We isolate the terms \eqref{E2}--\eqref{E3danger}, since they are exactly the terms that cease to be perturbative near the cone and produce the recentered inner operator.  The remaining terms are estimated perturbatively.

\subsection{The first even correction}

Define
\[
\cM_\eps[F]
:=H[F]-\frac{\eps}{\sqrt{1+|\grad F|^2}}.
\]
We seek
\[
F=F_0+\eps\Phi_1+\cdots.
\]
At first order the translator forcing is
\[\frac1{\sqrt{1+|\grad F_0|^2}}
=\frac{p_0(\theta)}{r^2}+O(r^{-6}),\]
with $p_0$ even across the cone.  The outer Jacobi equation therefore has homogeneity $\beta=2$, and we obtain
\[\Phi_1(r,\theta)=r^2a_1(\theta)+\Phi_{1,\mathrm{rem}},\]
where the remainder has better decay.

The function $a_1$ is even, not odd.  Thus generically
\[
a_1\left(\frac\pi4\right)\neq0.
\]
Consequently
\begin{equation}\label{coneshift}
F\big|_{u=v}
=\eps r^2a_1\left(\frac\pi4\right)+\cdots.
\end{equation}

Equation \eqref{coneshift} is the most transparent quantitative statement of the loss of odd symmetry: the perturbed translator no longer vanishes on Simons' cone, and its vertical displacement there is order $\eps r^2$.

\subsection{The second outer correction and the fractional singularity}

At the next relevant order, an odd forcing of order $r^{-3}$ occurs.  Solving with $\beta=1$ gives
\[
\Phi_2^{\mathrm{out}}=rq(\theta),
\]
where near the cone
\[
q(\theta)\sim g(\theta)^{2/3}.
\]
Using $t=r^3g(\theta)$,
\begin{equation}\label{Phi2outer}
\Phi_2^{\mathrm{out}}
\sim \frac{\operatorname{sgn}(t)|t|^{2/3}}r.
\end{equation}
Hence
\[
\partial_t\Phi_2^{\mathrm{out}}
\sim r^{-1}|t|^{-1/3},
\qquad
\partial_{tt}\Phi_2^{\mathrm{out}}
\sim r^{-1}|t|^{-4/3}.
\]

In the elliptic De Giorgi problem this singular derivative was not fatal because the correction was odd and the cone was a symmetry boundary: one solved on one side and extended oddly.  Here that mechanism is unavailable because \eqref{coneshift} has already broken the odd symmetry.

The singularity of \eqref{Phi2outer} requires an inner regularization.

The reason is that the problem is not merely that $t$ derivatives are singular, but that the cone can no longer be removed from the problem by odd symmetry.  The true solution must cross the cone smoothly.

\subsection{Recentering the vertical coordinate}\label{sec:recentering}

Consider
\[
F=F_0+\eps\Phi_1+\eps^2\Psi.
\]
We collect the terms linear in $\Psi$ that can become large near the cone.  From \eqref{E2}, the interaction between $\Phi_1$ and $\Psi$ contains
\[
-2\eps\,\rho^{-2}\Phi_{1,s}\Psi_{st}
\]
at principal level.  From \eqref{E3danger}, the cubic interaction contributes
\[
\eps^2\rho^{-2}\Phi_{1,s}^2\Psi_{tt}.
\]
Thus the second-order part in the tangential direction is schematically
\begin{equation}\label{squareop}
\partial_s^2
-2\eps\Phi_{1,s}\partial_{st}
+\eps^2\Phi_{1,s}^2\partial_{tt}.
\end{equation}
Ignoring derivatives of coefficients for one moment,
\[
\eqref{squareop}
=\left(\partial_s-\eps\Phi_{1,s}\partial_t\right)^2.
\]
This suggests defining
\[T=t+\eps\Phi_1(s,t).\]
Indeed, differentiation in $s$ at fixed $T$ gives
\[
\left.\partial_s\right|_T
=\partial_s-\eps\Phi_{1,s}\partial_t+\hbox{lower-order terms}.
\]
Therefore the apparently dangerous mixed terms are simply the expression, in the coordinate $t$, of the tangential second derivative in the new vertical coordinate $T$.

This identity gives the geometric explanation of the recentering: $\widetilde t=t+\eps\Phi_1$ is precisely the coordinate that completes the square in the principal part of the operator.

Moreover, near the cone
\[
T=t+O(\eps r^2),
\]
so the natural transition layer is centered around the translated surface rather than around the set $t=0$.

\section{The recentered inner transition layer}\label{sec:innerlayer}

We now enter the only region where the outer expansion is not a genuine asymptotic expansion in smooth functions.  It is worth describing the logic before carrying out the calculation.  The first correction moves the zero set of the graph by an amount of order $\varepsilon r^2$.  If one continues to measure the next correction using the unrecentered vertical coordinate $t=F_0$, then the singular factor $|t|^{2/3}$ is being expanded around the wrong center.  The singularity is therefore partly artificial: it records that the transition surface has moved while the coordinate system has not.

The recentered coordinate
\[
 T=t+\varepsilon\Phi_1
\]
removes this geometric mismatch.  After this change, the second-order combinations produced by the Taylor expansion organize themselves into the square of a transported tangential derivative.  Near the cone this has a decisive consequence.  If $h$ denotes a normal perturbation and one factors out the distinguished radial weight $r^{-3}$, the curvature potential cancels the radial zero-order term and the principal operator becomes, to first approximation,
\[
 H_{TT}+H_{rr}-\varepsilon H_T.
\]
Thus the vertical coordinate is not merely another elliptic variable: on the long transition scale it plays the role of time.  The parabolic equation that appears below is therefore not an analogy imposed on the problem; it is the leading operator obtained after the correct geometric recentering.

This viewpoint also explains why one must solve the transition problem globally across $T=0$.  In the minimal and Allen--Cahn constructions the relevant correction is odd and the Simons cone can be treated as a symmetry boundary.  The translator source is positive and even.  The first correction is even, the next forcing is odd, and the full solution no longer vanishes on the cone.  Hence the two outer sides must be joined by a smooth profile through an interior layer.  The self-similar equation derived in this section is precisely the matching equation for that profile.

Near Simons' cone one may use $r$ as the tangential large-scale variable and $T$ as the transverse variable.  Passing from vertical perturbations to normal perturbations gives at leading order
\[\cL_{\mathrm{in}}h
=h_{TT}+h_{rr}+\frac6r h_r+\frac6{r^2}h-\eps h_T
+\mathcal E_{\mathrm{low}}[h].\]
The coefficient $6/r$ is the radial divergence coefficient in the effective seven-dimensional transversal geometry of the cone, while $6/r^2$ is the corresponding potential term produced by the Jacobi operator.

There is a useful exact cancellation.  Put
\[h=r^{-3}H.\]
Then
\begin{align*}
\partial_r(r^{-3}H)&=-3r^{-4}H+r^{-3}H_r,\\
\partial_{rr}(r^{-3}H)&=12r^{-5}H-6r^{-4}H_r+r^{-3}H_{rr}.
\end{align*}
Hence
\begin{align*}
&\left(\partial_{rr}+\frac6r\partial_r+\frac6{r^2}\right)(r^{-3}H)\\
&\qquad=\left(12-18+6\right)r^{-5}H
+\left(-6+6\right)r^{-4}H_r+r^{-3}H_{rr}\\
&\qquad=r^{-3}H_{rr}.
\end{align*}
Thus, neglecting lower-order geometric errors,
\begin{equation}\label{HTeq}
H_{TT}+H_{rr}-\eps H_T=0.
\end{equation}

\subsection{The parabolic scale}

We compare radial diffusion and translating drift.  If $H$ varies on radial scale $r$ and transverse scale $L_T$, then
\[
H_{rr}\sim r^{-2}H,
\qquad
\eps H_T\sim \eps L_T^{-1}H.
\]
Balancing them gives
\begin{equation}\label{innerbalance}
L_T\sim\eps r^2.
\end{equation}
This is exactly the size of the first vertical displacement \eqref{coneshift}.  Thus the same scale is predicted independently by geometry and by the operator.

The remaining second transverse derivative has size
\[
H_{TT}\sim (\eps^2r^4)^{-1}H.
\]
Hence in the far-field portion of the inner regime, where $\eps r\gg1$, it is lower order compared with $H_{rr}\sim r^{-2}H$.  There the leading equation is
\[
\eps H_T-H_{rr}=0.
\]
With
\[
\tau=\frac{T}{\eps},
\]
we obtain the heat equation
\begin{equation}\label{heat}
H_\tau-H_{rr}=0.
\end{equation}

We separate the full inner operator \eqref{HTeq}, the balance \eqref{innerbalance}, and the far-field parabolic reduction \eqref{heat}.

\subsection{The self-similar inner profile}

The outer normal correction corresponding to \eqref{Phi2outer} behaves like
\[
h_{\mathrm{out}}\sim \frac{\operatorname{sgn}(T)|T|^{2/3}}{r^3}.
\]
We therefore seek a self-similar solution of \eqref{heat}.  Since the heat scaling is $\tau\sim r^2$, set
\[H(r,\tau)=r^{4/3}Q(p),
\qquad
p=\frac{\tau}{r^2}=\frac{T}{\eps r^2}.\]
We compute
\[
H_\tau=r^{-2/3}Q'(p),
\]
while
\[
H_r=r^{1/3}\left(\frac43Q-2pQ'\right).
\]
Differentiating once more,
\[
H_{rr}=r^{-2/3}\left[
\frac49Q-\frac23pQ'+4p^2Q''
\right].
\]
Thus \eqref{heat} becomes
\begin{equation}\label{QODE}
4p^2Q''+\left(\frac23p-1\right)Q'+\frac49Q=0.
\end{equation}
This is the self-similar inner ODE.

\subsection{Asymptotics of the inner ODE}

\subsubsection{Indicial roots at infinity}

Before solving \eqref{QODE}, one must determine every algebraic mode.  Suppose
\[
Q(p)\sim |p|^\alpha.
\]
The term $-Q'$ is one order lower, so the indicial equation is
\[
4\alpha(\alpha-1)+\frac23\alpha+\frac49=0.
\]
Equivalently,
\[
36\alpha^2-30\alpha+4=0
=2(3\alpha-2)(6\alpha-1).
\]
Hence
\[\boxed{\alpha_1=\frac23,\qquad \alpha_2=\frac16.}\]

A first formal expansion based only on the $2/3$ branch would give
\[
Q(p)=\pm|p|^{2/3}(1+c_\pm|p|^{-1}+O(|p|^{-2})).
\]
This is incomplete.  There is a second independent algebraic branch $|p|^{1/6}$, and it is larger than the $|p|^{-1/3}$ correction of the $2/3$ branch.

\subsubsection{Expansion of the $2/3$ branch}

For $p\to+\infty$, write
\[
Q=p^{2/3}\left(1+a_1p^{-1}+a_2p^{-2}+\cdots\right).
\]
Substitution into \eqref{QODE} gives recursively
\[
a_1=\frac13,
\qquad
a_2=-\frac1{108},
\]
so
\[Q_{2/3}(p)
=p^{2/3}\left(1+\frac1{3p}-\frac1{108p^2}+O(p^{-3})\right).\]
On the negative side the sign of the lower-order $-Q'$ contribution changes, producing
\[Q_{2/3}(-x)
=x^{2/3}\left(1-\frac1{3x}-\frac1{108x^2}+O(x^{-3})\right),
\qquad x\to+\infty,\]
up to the overall sign chosen for the desired odd-like leading behavior.

\subsubsection{Expansion of the $1/6$ branch}

Similarly,
\[Q_{1/6}(p)
=p^{1/6}\left(1+\frac1{36p}+O(p^{-2})\right)\]
for $p\to+\infty$, with the corresponding sign change in the first correction at the negative end.

Thus the algebraic scales are ordered as
\begin{equation}\label{scaleorder}
|p|^{2/3},\qquad |p|^{1/6},\qquad |p|^{-1/3},\qquad |p|^{-5/6},\ldots
\end{equation}
The second term in \eqref{scaleorder} must be retained in the matching.

\subsection{Behavior at $p=0$ and the Kummer representation}

For $p<0$ write
\[
p=-x,\qquad x>0,
\qquad z=\frac1{4x},
\qquad Q(-x)=x^{2/3}Y(z).
\]
A direct substitution gives
\[zY''+\left(\frac12-z\right)Y'+\frac23Y=0.\]
This is Kummer's equation with
\[
a=-\frac23,
\qquad b=\frac12.
\]
As $p\to0^-$ one has $z\to+\infty$.  One Kummer branch grows exponentially and cannot produce a smooth $Q$ at $p=0$.  Up to scale, the admissible branch is therefore the algebraic solution
\[Q(-x)=C x^{2/3}U\left(-\frac23,\frac12,\frac1{4x}\right).\]
As $x\to\infty$, $z\to0$, and the standard connection formula for $U$ contains both a constant term and a $z^{1/2}$ term.  Therefore
\[Q(-x)=A_-x^{2/3}+B_-x^{1/6}+C_-x^{-1/3}+\cdots,\]
with
\[B_-\neq0.\]
Thus the $1/6$ mode is forced by smoothness at $p=0$ on the negative side; it cannot be discarded by a choice of normalization.

A regular power series
\[
Q(p)=a_0+a_1p+a_2p^2+\cdots
\]
has
\[
a_1=\frac49a_0,
\]
and the entire Taylor jet is recursively fixed by $a_0$.

On the positive side there is, in addition, a flat solution associated with the irregular singular point at $p=0$.  This point deserves a complete calculation because the irregular singularity allows an additional flat solution.

Introduce the unified Kummer variable
\[
w=-\frac1{4p}.
\]
If
\[
Q(p)=p^{2/3}Y(w)
\]
(with the appropriate real branch understood on each side), then $Y$ solves
\[wY''+\left(\frac12-w\right)Y'+\frac23Y=0.\]
Thus $a=-2/3$ and $b=1/2$ in the standard Kummer notation.

For $p>0$ one has $w\to-\infty$ as $p\downarrow0$.  Kummer's transformation shows that the solution exponentially small in this limit is
\begin{equation}\label{flatmodeexact}
Q_{\mathrm{flat}}(p)
=p^{2/3}e^{-1/(4p)}
U\left(\frac76,\frac12,\frac1{4p}\right).
\end{equation}
Indeed, since
\[
U\left(\frac76,\frac12,z\right)\sim z^{-7/6},
\qquad z\to+\infty,
\]
we obtain
\[Q_{\mathrm{flat}}(p)
=4^{7/6}p^{11/6}e^{-1/(4p)}(1+O(p)),
\qquad p\downarrow0.\]
In particular
\[
D^kQ_{\mathrm{flat}}(0^+)=0
\qquad\hbox{for every }k\ge0.
\]
Hence one may continue the regular Taylor jet through $p=0$ and still add
\[
\lambda Q_{\mathrm{flat}}.
\]
This is the Stokes degree of freedom.

The crucial point is that this degree of freedom really changes the leading coefficient at $+\infty$.  To see this, use the connection formula for $U$ at the origin:
\[
U\left(\frac76,\frac12,z\right)
=
\frac{\Gamma(1/2)}{\Gamma(5/3)}
+
\frac{\Gamma(-1/2)}{\Gamma(7/6)}z^{1/2}
+O(z),
\qquad z\to0^+.
\]
Substituting $z=(4p)^{-1}$ in \eqref{flatmodeexact} gives
\[Q_{\mathrm{flat}}(p)
=
A_{\mathrm{flat}}p^{2/3}
+B_{\mathrm{flat}}p^{1/6}
+O(p^{-1/3}),
\qquad p\to+\infty,\]
where
\[
A_{\mathrm{flat}}
=
\frac{\Gamma(1/2)}{\Gamma(5/3)}\neq0.
\]
Consequently the Stokes parameter acts nontrivially on the coefficient of the dominant $p^{2/3}$ mode.

We may therefore normalize the unique smooth negative branch so that
\[
A_-=-1.
\]
Let $Q_{\mathrm{reg}}$ denote its regular continuation to $p>0$, and write
\[
Q_{\mathrm{reg}}(p)
=A_+^{\mathrm{reg}}p^{2/3}+B_+^{\mathrm{reg}}p^{1/6}+O(p^{-1/3}).
\]
Then the choice
\[\lambda
=\frac{1-A_+^{\mathrm{reg}}}{A_{\mathrm{flat}}}\]
produces a global $C^\infty$ solution
\[
Q=Q_{\mathrm{reg}}+\lambda Q_{\mathrm{flat}}\mathbf 1_{\{p>0\}}
\]
(with the flat term extended by zero to $p\le0$) satisfying
\[
A_-=-1,
\qquad
A_+=1.
\]
Because the extension of the flat term has zero jet at $p=0$, this piecewise definition is $C^\infty$ and solves the ODE classically on both sides and by continuity at $p=0$.

We have therefore proved the following lemma.

\begin{remark}[Preliminary inner-profile statement]\label{InnerProfileLemma}
The preceding calculations identify the two indicial branches $p^{2/3}$ and $p^{1/6}$ and the first terms of their asymptotic expansions.  A complete global existence statement, including smooth passage through $p=0$ and the Stokes parameter, is proved in Lemma~\ref{innerprofilefinal}.
\end{remark}

The independent $|p|^{1/6}$ mode must be retained, and the smooth profile can be normalized to have opposite unit leading coefficients at the two ends.

\subsection{Translation of inner modes to outer Jacobi modes}

The normal inner correction is normalized by
\begin{equation}\label{hinnerfull}
h_{\mathrm{in}}(r,T)
=\eps^{2/3}r^{-5/3}
Q\left(\frac{T}{\eps r^2}\right).
\end{equation}
Let
\[
p=\frac{T}{\eps r^2}.
\]
We now translate each algebraic mode of $Q$ into an outer term.

\subsubsection{The $p^{2/3}$ mode}

\[
\eps^{2/3}r^{-5/3}p^{2/3}
=\frac{T^{2/3}}{r^3}.
\]
This is exactly the normal form of the $r^{-3}$ outer correction.

\subsubsection{The missing $p^{1/6}$ mode}

\[
\eps^{2/3}r^{-5/3}p^{1/6}
=\eps^{1/2}\frac{T^{1/6}}{r^2}.
\]
To pass from normal to vertical displacement we multiply, to leading order, by
\[
|\grad F_0|\sim r^2.
\]
Hence the corresponding vertical term is
\[
\eps^{1/2}T^{1/6}
\sim \eps^{1/2}r^{1/2}g(\theta)^{1/6}.
\]
Since the whole second correction enters $F$ with prefactor $\eps^2$, the actual graph contains
\[\boxed{
\eps^{5/2}B_\pm r^{1/2}g(\theta)^{1/6}.}\]
But by \eqref{kernelq},
\[
r^{1/2}g^{1/6}
\]
is exactly the homogeneous outer Jacobi mode corresponding to $\beta=1/2$.

The new fractional order $\eps^{5/2}$ is therefore not an arbitrary patch: it is the outer homogeneous Jacobi mode selected by the second algebraic branch of the inner heat profile.

\subsubsection{The $p^{-1/3}$ mode}

The next correction of the $2/3$ branch gives
\[
\eps^{2/3}r^{-5/3}|p|^{-1/3}
=\eps\frac{|T|^{-1/3}}r.
\]
This is the next matching correction and occurs after the $|p|^{1/6}$ contribution.

\subsection{The uniform inner--outer ansatz}

A truncated outer expansion would have the schematic form
\[F_{\mathrm{tr}}
=F_0+\eps\Phi_1+\eps^2\Phi_2+\eps^3\Phi_3+\cdots.\]
The uniform expansion used below is
\[\boxed{
F_{\mathrm{app}}
=F_0+\eps\Phi_1
+\eps^2\Phi_2
+\eps^{5/2}\Phi_{5/2}
+\eps^3\Phi_3+\cdots.}\]
The term $\Phi_{5/2}$ is homogeneous in the far outer region,
\[
\Phi_{5/2}^{\pm}=B_\pm r^{1/2}g_\pm^{1/6},
\]
but it is not continued to the cone.  It is only the far-field expansion of the smooth inner profile \eqref{hinnerfull}.

\subsection{Matching regions and cutoff estimates}

Consider first the matching scale
\begin{equation}\label{matchregion0}
|T|\sim r^{3-\sigma}.
\end{equation}
Then
\[
|p|=\frac{|T|}{\eps r^2}
\sim\frac{r^{1-\sigma}}\eps\gg1.
\]

If the $p^{1/6}$ term is omitted, the leading inner--outer mismatch is
\[
\eps^{1/2}\frac{|T|^{1/6}}{r^2}.
\]
At \eqref{matchregion0},
\[
|T|^{1/6}=r^{(3-\sigma)/6}=r^{1/2-\sigma/6},
\]
so
\[|h_{\mathrm{in}}-h_{\mathrm{out}}^{(0)}|
\gtrsim
\eps^{1/2}r^{-3/2-\sigma/6}.\]
This is too large for the residual estimate required below.

Thus the $p^{1/6}$ term is a necessary matching contribution and must be included in the cutoff estimate.

After including both $p^{1/6}$ and $p^{-1/3}$, the first omitted algebraic scale is $p^{-5/6}$.  Hence
\begin{align*}
|h_{\mathrm{in}}-h_{\mathrm{out}}^{\mathrm{new}}|
&\le C\eps^{2/3}r^{-5/3}|p|^{-5/6}\notag\\
&\le C\eps^{3/2}r^{-5/2+5\sigma/6}.
\end{align*}
A radial cutoff on scale $r$ costs two derivatives, producing
\[O\left(\eps^{3/2}r^{-9/2+5\sigma/6}\right)\]
in the normal Jacobi equation.  Since this layer is multiplied by $\eps^2$ in the graph expansion, the actual translator residual carries factor $\eps^{7/2}$.

For $\sigma<3/5$,
\[
-\frac92+\frac56\sigma<-4,
\]
so the resulting cutoff error decays faster than the critical $r^{-4}$ scale.

\subsection{Three-region decomposition}

The previous discussion suggests organizing the proof in three regions.

\paragraph{Inner region.}
\[|T|\le K\eps r^2.\]
Use the smooth inner profile $Q$ directly.

\paragraph{Overlap region.}
\[K\eps r^2<|T|<r^{3-\delta}.\]
Here $|p|\gg1$ and the inner profile is expanded in the algebraic modes
\[
|p|^{2/3},\quad |p|^{1/6},\quad |p|^{-1/3},\ldots
\]
which are matched to the separated outer Jacobi modes.

\paragraph{Far outer region.}
\[|T|\ge r^{3-\delta}.\]
Return completely to $(r,\theta)$ and use the outer Jacobi theory.

This decomposition makes the role of each coordinate system transparent and prevents the inner variables from contaminating the entire proof.

\subsection{Nonlinear consistency of the $\eps^{5/2}$ mode}

We next verify that the additional mode does not create a larger error.

The outer profile is
\[
\Phi_{5/2}=B_\pm r^{1/2}g^{1/6}.
\]
By construction it satisfies
\[L_0\Phi_{5/2}=0.\]
Therefore its leading linear contribution vanishes.  The difference between the exact operator and $L_0$ contains extra factors arising from the $r^{-4}$ correction in
\[
(r^{-4}+9g^2+(g')^2)^{-3/2},
\]
so away from the cutoff region the linear residual gains several powers of $r^{-1}$.

The potentially dangerous interactions with $\Phi_1$ are exactly the mixed second derivatives that formed the completed square \eqref{squareop}.  They must not be estimated as errors in the variable $t$; they are incorporated into the principal operator after the change $T=t+\eps\Phi_1$.

To organize all remaining singular angular factors we define
\[\eta:=\frac{\eps r^2}{|T|}.\]
In the outer region $\eta\ll1$.  Since $T\sim r^3g$ there,
\[
g^{-a}\sim\left(\frac{r^3}{|T|}\right)^a
=\eps^{-a}r^a\eta^a.
\]
Thus every negative power of $g$ can be rewritten as a controlled power of $\eta$ and $\eps$.

For example,
\[
\partial_\theta\Phi_{5/2}\sim r^{1/2}g^{-5/6},
\]
and hence
\[
\eps^{5/2}|\partial_\theta\Phi_{5/2}|
\lesssim
\eps^{5/3}r^{4/3}\eta^{5/6},
\]
which is small in the overlap after the metric factors in the Jacobi operator are inserted.

The preceding observation can be organized as a weighted bookkeeping lemma.  We state it here in the form needed later.  Its proof is an explicit inspection of the quadratic and cubic terms in \eqref{polyA}; the point of the statement is to record which terms are absorbed into the recentered principal operator and which are true remainders.

\begin{lemma}[Outer nonlinear bookkeeping]\label{NonlinearBookkeeping}
Fix $0<\delta<1/2$ and consider the outer region
\[
|T|\ge K\eps r^2,
\qquad K\gg1.
\]
Let
\[
\Phi=\Phi_1+\eps\Phi_2+\eps^{3/2}\Phi_{5/2}+\eps^2\Phi_3,
\]
where the profiles have the asymptotic degrees
\[
\Phi_1=O(r^2),\qquad
\Phi_2=O(rg^{2/3}),\qquad
\Phi_{5/2}=O(r^{1/2}g^{1/6}),
\]
and where $\Phi_{5/2}$ is chosen in the homogeneous kernel $L_0\Phi_{5/2}=0$.  After changing from $t$ to $T=t+\eps\Phi_1$, all terms linear in $\Phi_2$ or $\Phi_{5/2}$ which have the apparent size generated by
\[
-2\eps\rho^{-2}\Phi_{1,s}\partial_{Ts}
+\eps^2\rho^{-2}\Phi_{1,s}^2\partial_{TT}
\]
are part of the principal operator.  Every remaining contribution containing $\Phi_{5/2}$ carries either
\begin{enumerate}[label=(\alph*)]
\item an additional factor $\eta^\kappa$, $\kappa>0$, where $\eta=\eps r^2/|T|$, or
\item an additional factor $r^{-1}$ relative to the critical $r^{-4}$ scale, or
\item total prefactor at least $\eps^5$.
\end{enumerate}
Consequently, in the overlap region $K\eps r^2\le |T|\le r^{3-\delta}$, these terms are lower order with respect to the first unmatched outer residual, uniformly for $K$ large and $\eps$ small.
\end{lemma}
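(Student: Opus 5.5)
The proof is an inspection of the polynomial expansion \eqref{polyA} with $A=\eps$ and $\phi=\Phi$. The plan is to sort every term containing $\Phi_{5/2}$ according to its total $\eps$-prefactor and its $(r,g)$-homogeneity, and then convert each negative power of $g$ into powers of $\eta$.

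\textbf{Step 1: sizes of the building blocks.} In the outer region, with $T\sim r^3g$, I will record the sizes of the coefficients and of the derivatives of the profiles, using the $(s,t)$ scalings of Section~\ref{sec:outer}:
\[
|\xi_0|\sim r^2,\qquad \partial_t\sim r^{-3}g^{-1}\ \text{(acting on powers of }g),\qquad \rho^{-1}\partial_s\sim r^{-1}\ \text{(tangentially)}.
\]
Each derivative of $\Phi_{5/2}=O(r^{1/2}g^{1/6})$ then costs either $r^{-1}$ or $(r^3 g)^{-1}\sim |T|^{-1}$. By the rewriting $g^{-a}\sim\eps^{-a}r^a\eta^a$, every negative power of $g$ becomes an explicit power of $\eta$.

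\textbf{Step 2: linear terms in $\Phi_{5/2}$.} These appear at order $\eps^{5/2}$ in $E_1$. By $L_0\Phi_{5/2}=0$, only $\widetilde L_0-L_0$ and $\widetilde E_0$ survive. Both involve either the $r^{-4}$ correction in $(r^{-4}+D)^{-3/2}$ or derivatives of $|\xi_0|^{-2}$ carrying an extra $r^{-4}$. This gives alternative (b), and in fact a large power of $r^{-1}$.

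\textbf{Step 3: quadratic and cubic cross terms.} These have prefactor $\eps^{7/2}$, from $E_2[\Phi_1,\Phi_{5/2}]$, and $\eps^{9/2}$, from $E_3[\Phi_1,\Phi_1,\Phi_{5/2}]$. The terms with the exact structure $-2\eps\rho^{-2}\Phi_{1,s}\Phi_{5/2,st}$ and $\eps^2\rho^{-2}\Phi_{1,s}^2\Phi_{5/2,tt}$ are precisely the completed square \eqref{squareop}. Rewriting them as $\partial_s|_T^2\Phi_{5/2}$ places them in the principal operator, as in Section~\ref{sec:recentering}.

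What remains in $E_2$ falls into two groups:
\begin{enumerate}[label=(\roman*)]
\item terms where $\partial_t$ falls on a coefficient or on $\Phi_1$ instead of on $\Phi_{5/2}$. Since $\Phi_1$ is smooth across the cone, these gain $r^{-1}$ against the critical scale, which is (b);
\item terms where a $\partial_t$ falls on $\Phi_{5/2}$ without a matching $\Phi_{1,s}$ factor. Such a term has size $\eps^{7/2}r^{-4}\cdot r^{1/2}g^{1/6}\cdot(\eps r^2/|T|)$ relative to $\eps^{5/2}r^{-4}$. Since $\eps r^{1/2}\cdot\eta\lesssim \eta^{\kappa}$ once $|T|\ge K\eps r^2$ and $|T|\le r^{3-\delta}$, this falls under (a).
\end{enumerate}
The lower-order pieces of $E_3$ cross the commutator of the transport derivative $\partial_s-\eps\Phi_{1,s}\partial_t$ with itself. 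These pieces contain $\Phi_{1,ss}$ or $\Phi_{1,st}$, so again (b) holds. Interactions of $\Phi_{5/2}$ with $\Phi_2$, $\Phi_3$ or with itself have prefactor at least $\eps^{5/2+2}$, hence at least $\eps^{5}$ when counted inside the $\eps^{2}$ normalization of the second-order layer. This is (c).

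\textbf{Main obstacle.} The delicate point is Step 3(ii). One must check that in the overlap $K\eps r^2\le|T|\le r^{3-\delta}$ the powers of $g^{-1}$ never outnumber the available $\eps$ and $r^{-1}$ factors. In particular, the $\partial_{tt}\Phi_{5/2}\sim r^{1/2}g^{-11/6}r^{-6}$ contribution appearing in $E_3$ must be fully absorbed by the square, not merely partially.

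I would first verify this by computing the difference between $\partial_s|_T^2$ and \eqref{squareop} exactly, keeping the derivative-of-coefficient terms. I would then bound that difference term by term using $g^{-a}=\eps^{-a}r^a\eta^a$. The condition $\delta>0$ ensures that $r^{-1}$ gains are not swallowed by $\eta^{-1}$ losses at the outer edge of the overlap. Finally, comparing with the first unmatched residual, of order $\eps^{7/2}r^{-9/2+5\sigma/6}$, gives the concluding sentence of the lemma.
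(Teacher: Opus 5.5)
Your overall route is the paper's: absorb the two mixed terms into $(\partial_s-\eps\Phi_{1,s}\partial_t)^2$ via $T=t+\eps\Phi_1$, trade negative powers of $g$ for powers of $\eta$, and use (c) for the $\Phi_{5/2}$ self-interaction. However, three of the finer classifications you add are false as written, and the argument rests on them.

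(i) The $\Phi_2$--$\Phi_{5/2}$ cross terms in $\eps^2E_2[\Phi]$ carry the prefactor $\eps^2\cdot\eps\cdot\eps^{3/2}=\eps^{9/2}$, not $\eps^5$. No normalization changes this; factoring out $\eps^2$ only lowers the exponent. This is why the paper reserves (c) for terms quadratic in $\Phi_{5/2}$. These cross terms must go through the $\eta$-bookkeeping like the other remainders. That works, but it is a separate check, since $\Phi_{2,t}\sim r^{-1}|T|^{-1/3}$ is itself singular at the cone.

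(ii) In Step 3(ii) the inequality $\eps r^{1/2}\eta\lesssim\eta^\kappa$ is false. At $\eta\approx 1/K$ the left side is about $\eps r^{1/2}/K\to\infty$ as $r\to\infty$. The constraint $|T|\le r^{3-\delta}$ bounds $\eta$ from below, not from above. The slip comes from comparing with $\eps^{5/2}r^{-4}$ instead of the natural size $\eps^{5/2}r^{-4}|\Phi_{5/2}|$, plus a spurious factor $\eps$ from writing $\partial_t\sim\eta$ rather than $\partial_T\sim|T|^{-1}=\eta(\eps r^2)^{-1}$. Done correctly, $|\xi_0|\partial_s(\rho^{-2}\Phi_{1,s}/|\xi_0|^2)\approx L_0\Phi_1=O(r^{-2})$ and $\Phi_{5/2,t}\sim|T|^{-5/6}$. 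The term is therefore $O(\eps^{7/2}r^{-2}|T|^{-5/6})=\eta\cdot O(\eps^{5/2}r^{-4}|T|^{1/6})$, which is (a) with $\kappa=1$. That is exactly the paper's step.

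(iii) Step 2 fails near the cone. Since $r^{1/2}g^{1/6}=t^{1/6}$, the profile $\Phi_{5/2}$ is a function of $F_0$ alone. The identity $L_0\Phi_{5/2}=0$ holds only because $L_0\phi=\operatorname{div}(P^\perp\nabla\phi/|\nabla F_0|)$ sees nothing but derivatives along level sets of $F_0$; here $P^\perp$ is the projection orthogonal to $\nabla F_0$. The full linearization instead gives $H'(F_0)[t^{1/6}]=\operatorname{div}\bigl(\tfrac16t^{-5/6}\nabla F_0/W_0^3\bigr)\approx-\tfrac5{36}W_0^{-1}t^{-11/6}$. In $(s,t)$ variables this is the piece $|\xi_0|\partial_t(\phi_t/|\xi_0|^2)$ of $\widetilde L_0$.

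The ratio of this term to $r^{-4}|\Phi_{5/2}|$ is $r^{-4}g^{-2}\asymp(r/|T|)^2$: the relative $r^{-4}$ in the coefficient is eaten by $g^{-2}$. At $|T|=K\eps r^2$ the ratio is $(K\eps r)^{-2}$, which is not $O(r^{-1})$ and exceeds $1$ when $r<(K\eps)^{-1}$. So this term falls under none of (a), (b), (c) uniformly in the overlap. It is the $h_{TT}$ term of the recentered inner operator and must be kept in the principal part. The paper's remark before the lemma claims the gain of powers of $r^{-1}$ only away from the cutoff region.

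In particular, the singular $\partial_{tt}\Phi_{5/2}$ you flag as the main obstacle already enters linearly, with coefficient $|\xi_0|^{-1}$. The completed square, whose $\partial_{tt}$-coefficient is $\eps^2\rho^{-2}\Phi_{1,s}^2|\xi_0|^{-1}$, does not contain it. With these three corrections your outline reduces to the paper's schematic argument.
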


\paragraph{Proof.}
The potentially largest terms are those linear in the new profile and involving one copy of $\Phi_1$.  In the $(s,t)$ variables they arise from the $A^2$ and $A^3$ pieces in \eqref{polyA}; schematically they are
\[
-2\eps\rho^{-2}\Phi_{1,s}\Phi_{5/2,Ts},
\qquad
\eps^2\rho^{-2}\Phi_{1,s}^2\Phi_{5/2,TT}.
\]
As explained in Section~\ref{sec:recentering}, these two expressions are not errors: together with the $s$--second derivative they form
\[
\left(\partial_s-\eps\Phi_{1,s}\partial_t\right)^2\Phi_{5/2}
=\partial_{s|T}^2\Phi_{5/2}+\text{lower-order terms}.
\]
Estimating them separately would therefore give misleadingly large powers.

For the genuine remainders we use
\[
g\sim \frac{|T|}{r^3}
=\frac{\eps}{\eta r}.
\]
Thus, for example,
\[
\eps^{5/2}r^{1/2}g^{-5/6}
=\eps^{5/3}r^{4/3}\eta^{5/6}.
\]
The inverse metric coefficient in the angular direction contributes $r^{-2}(1+r^4(g')^2)^{-1}$, while in the cone regime $g'$ stays bounded away from zero.  Hence two angular derivatives recover the radial decay that is hidden in the raw derivative $g^{-5/6}$.  The terms with one angular and one recentered transversal derivative acquire, in addition, a factor $\eta$ because
\[
\partial_T\sim |T|^{-1}=\eta(\eps r^2)^{-1}.
\]
Finally, all terms quadratic in $\Phi_{5/2}$ carry the prefactor $\eps^5$ and are harmless at the orders presently retained.

The key point is that the dangerous mixed terms are incorporated into the operator after recentering, rather than estimated as nonlinear errors.

\section{The exact BDG graph and weighted Jacobi theory}\label{sec:exactbdg}

Up to this point the principal calculations have been carried out on the homogeneous cubic graph
\[
\Gamma_0=\{(x,F_0(x)):x\in\R^8\},\qquad F_0=r^3g(\theta).
\]
The actual Bombieri--De Giorgi--Giusti graph is
\[
\Gamma=\{(x,\olF(x)):x\in\R^8\},
\]
and the construction of a translator must ultimately be performed around $\Gamma$, not around $\Gamma_0$.  The purpose of this section is therefore not cosmetic: we explain in detail why every model computation made on $\Gamma_0$ survives on $\Gamma$ with an error which is strictly lower order.

We isolate here the geometric transfer statement that will be used repeatedly later.

\subsubsection{Zeroth-order closeness}

The starting estimate is
\begin{equation}\label{zeroclose}
0\le \olF-F_0\le Cr^{-\sigma_0}
\qquad\hbox{in }T=\{v>u\},\quad r\gg1,
\end{equation}
for some $\sigma_0\in(0,1)$.  By odd reflection the corresponding estimate holds on the opposite sector with the appropriate sign.

At first sight one might try to differentiate \eqref{zeroclose}.  This is not legitimate.  The graph $F_0$ has slope of size
\[
|\grad F_0|\sim r^2,
\]
so a vertical error of size $r^{-\sigma_0}$ can correspond to a much smaller \emph{normal} displacement, and Euclidean derivatives in the steep direction may lose powers of $r$.  The correct statement is intrinsic to the graph.

To see the scaling, the unit normal to $\Gamma_0$ is
\[
\nu_0(x)=\frac{(-\grad F_0(x),1)}{\sqrt{1+|\grad F_0(x)|^2}},
\]
so its vertical component satisfies
\[
\nu_0\cdot e_9\sim r^{-2}.
\]
Consequently a vertical separation $O(r^{-\sigma_0})$ corresponds to a normal separation of order
\[
O(r^{-2-\sigma_0}).
\]
This two-power gain is the basic geometric fact behind all derivative estimates below.

\subsubsection{Local graph representation on tangent planes}

Fix $p_0\in\Gamma_0$ with $r(p_0)=R\gg1$.  The curvature of both $\Gamma_0$ and $\Gamma$ is $O(R^{-1})$ on balls of radius comparable with $R$.  In particular, after choosing a sufficiently small universal $\vartheta>0$, both surfaces can be represented in $B(p_0,\vartheta R)$ as graphs over the tangent plane $T_{p_0}\Gamma_0$.

Choose an orthonormal basis $\Pi_1,\ldots,\Pi_8$ of $T_{p_0}\Gamma_0$.  We write
\begin{align}
 p(t)&=p_0+\sum_{j=1}^8t_j\Pi_j+G_0(t)\nu_0(p_0),\\
 q(t)&=p_0+\sum_{j=1}^8t_j\Pi_j+G(t)\nu_0(p_0),
\end{align}
for $|t|<\vartheta R$, where $p(t)\in\Gamma_0$ and $q(t)\in\Gamma$.

For the homogeneous graph, direct differentiation of the parametrization gives
\[|D^mG_0(t)|\le C_mR^{1-m},\qquad m\ge1,\]
and the same estimate holds for the coordinate map $x=x(t)$.  This is simply the intrinsic version of the homogeneity
\[
D^mF_0=O(R^{3-m}),
\]
combined with the large slope $|\grad F_0|\sim R^2$.

The key improvement is that \eqref{zeroclose} implies much more than
\[
|G-G_0|=O(R^{-\sigma_0}).
\]
Indeed, because the normal makes an angle of order $R^{-2}$ with the vertical direction, one obtains
\begin{equation}\label{normalclose}
\|G-G_0\|_{L^\infty(B_{\vartheta R})}\le CR^{-2-\sigma_0}.
\end{equation}

The estimate $\olF-F_0=O(r^{-\sigma_0})$ is vertical.  The quantity to which local elliptic theory is applied is the normal graph difference $G-G_0$, and that is two powers smaller.

\subsubsection{Schauder improvement of the normal graph difference}

Both $G$ and $G_0$ solve uniformly elliptic minimal-surface equations in the tangent coordinates $t$, after rescaling by $R$.  Put
\[
\widetilde G(y)=R^{-1}G(Ry),\qquad
\widetilde G_0(y)=R^{-1}G_0(Ry),\qquad
\widetilde h=\widetilde G-\widetilde G_0.
\]
On a fixed ball $B_{2\vartheta}\subset\R^8$, the coefficients of the equation for $\widetilde h$ have uniformly bounded $C^k$ norms and a uniform ellipticity constant.  From \eqref{normalclose},
\[
\|\widetilde h\|_{L^\infty(B_{2\vartheta})}\le CR^{-3-\sigma_0}.
\]
The inhomogeneity produced by the fact that $F_0$ is only an approximate minimal graph is $O(R^{-4})$ after scaling.  Interior Schauder estimates therefore give, for each fixed $m\ge1$,
\[
\|D^m\widetilde h\|_{L^\infty(B_{\vartheta})}
\le C_mR^{-3-\sigma_0},
\]
after possibly decreasing $\sigma_0$.  Returning to the original scale yields
\begin{equation}\label{Gderall}
|D^m(G-G_0)(t)|\le C_mR^{-m-2-\sigma_0},
\qquad |t|<\vartheta R.
\end{equation}

In intrinsic notation this implies the estimate used throughout the paper:
\begin{equation}\label{FbarF0derivs}
|D_{\Gamma_0}^m(\olF-F_0)|\le C_m r^{-m-\sigma_0},
\qquad m\ge1,
\end{equation}
together with
\[
\olF-F_0=O(r^{-\sigma_0}).
\]
The apparent mismatch between \eqref{Gderall} and \eqref{FbarF0derivs} is exactly the conversion between normal and vertical graph displacement.

\subsubsection{Normal projection and comparison of Jacobi operators}

For $r$ large the normal projection defines a smooth diffeomorphism
\[
\pi:\Gamma\longrightarrow\Gamma_0,
\]
with
\[
\pi(y)=y+t_y\nu(y),\qquad |t_y|\le Cr^{-2-\sigma_0}.
\]
If $h$ on $\Gamma$ and $h_0$ on $\Gamma_0$ are related by
\[
h(y)=h_0(\pi(y)),
\]
then \eqref{Gderall} gives
\begin{align*}
D_\Gamma h
 &=\bigl[D_{\Gamma_0}h_0
 +O(r^{-2-\sigma_0})D_{\Gamma_0}h_0\bigr]\circ\pi,\\
D_\Gamma^2 h
 &=\bigl[D_{\Gamma_0}^2h_0
 +O(r^{-2-\sigma_0})D_{\Gamma_0}^2h_0
 +O(r^{-3-\sigma_0})D_{\Gamma_0}h_0\bigr]\circ\pi.
\end{align*}
Since the second fundamental forms also differ by $O(r^{-2-\sigma_0})$ relative to their natural size, the Jacobi operators satisfy
\begin{align*}
\cJ_\Gamma h
&=\Bigl[\cJ_{\Gamma_0}h_0
+O(r^{-2-\sigma_0})D_{\Gamma_0}^2h_0
+O(r^{-3-\sigma_0})D_{\Gamma_0}h_0\notag\\
&\hspace{35mm}+O(r^{-4-\sigma_0})h_0\Bigr]\circ\pi.
\end{align*}

\begin{proposition}[Transfer from $\Gamma_0$ to $\Gamma$]\label{transferprop}
Let $h_0$ be an $O(4)\times O(4)$-invariant function on $\Gamma_0$ satisfying, for some $\gamma\in\R$,
\[
|D_{\Gamma_0}^jh_0|\le Cr^{-\gamma-j},\qquad j=0,1,2.
\]
Let $h=h_0\circ\pi$ on $\Gamma$.  Then
\[
\cJ_\Gamma h
=\bigl(\cJ_{\Gamma_0}h_0\bigr)\circ\pi
+O(r^{-\gamma-4-\sigma_0}).
\]
In particular, every strict Jacobi barrier on $\Gamma_0$ with margin $cr^{-\gamma-4}$ remains a strict barrier on $\Gamma$ for $r$ sufficiently large.
\end{proposition}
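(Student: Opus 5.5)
The plan is to substitute the hypothesis on $h_0$ directly into the operator comparison displayed just before the statement, and then check the orders term by term. The comparison is
\[
\cJ_\Gamma h=\Bigl[\cJ_{\Gamma_0}h_0+O(r^{-2-\sigma_0})D^2_{\Gamma_0}h_0+O(r^{-3-\sigma_0})D_{\Gamma_0}h_0+O(r^{-4-\sigma_0})h_0\Bigr]\circ\pi .
\]
I would first recheck where its coefficients come from. The key input is the intrinsic estimate \eqref{Gderall}, used in the tangent-plane charts of radius $\vartheta R$ around each $p_0\in\Gamma_0$.

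In such a chart, $\Gamma$ and $\Gamma_0$ are normal graphs $G$ and $G_0$ over $T_{p_0}\Gamma_0$. Their induced metrics $g_{ij}=\delta_{ij}+\partial_iG\,\partial_jG$ differ by $O(|DG_0||D(G-G_0)|)+O(|D(G-G_0)|^2)$. Christoffel symbols involve one more derivative, and $|A|^2$ involves $D^2G$. Since $|DG_0|\le C$ near $t=0$ and $|D^mG_0|\le CR^{1-m}$, the estimate \eqref{Gderall} gives the following bounds:
\begin{itemize}
\item metric difference: $O(R^{-3-\sigma_0})$, which is better than the coefficient $r^{-2-\sigma_0}$ needed in front of $D^2h_0$;
\item Christoffel difference: $O(R^{-4-\sigma_0})$, which is better than $r^{-3-\sigma_0}$;
\item $|A_\Gamma|^2-|A_{\Gamma_0}|^2=O(|A||D^2(G-G_0)|)=O(R^{-1}\cdot R^{-4-\sigma_0})$, which is better than $r^{-4-\sigma_0}$.
\end{itemize}
The pullback $h=h_0\circ\pi$ also has to be accounted for. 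Here $\pi$ is the normal projection, which moves points by $O(r^{-2-\sigma_0})$ and whose differential differs from the chart identification by $O(|D(G-G_0)|\,|A|)$. The resulting corrections are of the same or better order. So the displayed comparison holds with these coefficients (in fact with room to spare), uniformly in $p_0$ because every chart constant is scale-invariant after rescaling by $R$.

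Next comes the substitution. Inserting $|D^j_{\Gamma_0}h_0|\le Cr^{-\gamma-j}$, each error term is bounded as follows:
\begin{align*}
r^{-2-\sigma_0}\,r^{-\gamma-2}&=r^{-\gamma-4-\sigma_0},\\
r^{-3-\sigma_0}\,r^{-\gamma-1}&=r^{-\gamma-4-\sigma_0},\\
r^{-4-\sigma_0}\,r^{-\gamma}&=r^{-\gamma-4-\sigma_0}.
\end{align*}
Since $r\circ\pi=r(1+O(r^{-3-\sigma_0}))$, these bounds can be expressed in terms of $r$ on either surface. This gives $\cJ_\Gamma h=(\cJ_{\Gamma_0}h_0)\circ\pi+O(r^{-\gamma-4-\sigma_0})$.

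For the barrier statement, suppose $\cJ_{\Gamma_0}h_0\le -cr^{-\gamma-4}$. Then $\cJ_\Gamma h\le -cr^{-\gamma-4}+Cr^{-\gamma-4-\sigma_0}\le -\tfrac c2 r^{-\gamma-4}$ once $r^{\sigma_0}\ge 2C/c$. The reverse inequality for subsolutions follows in the same way.

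The main obstacle is the step that bounds the difference of the two operators only in terms of $D^{\le2}(G-G_0)$ in the rescaled charts. This requires the $C^k$ bounds \eqref{Gderall} up to $m=3$, so that the $|A|^2$ comparison and the Christoffel comparison are legitimate, together with the invariance of all constants across charts. I would handle this entirely in the rescaled variables $y=t/R$, where both surfaces are uniformly smooth graphs. There the operator difference is a smooth function of $(D\widetilde h,D^2\widetilde h,D^3\widetilde h)$ that vanishes linearly, and scaling back produces exactly the stated powers of $R$. The $O(4)\times O(4)$ invariance is not needed for the estimate itself; it only ensures that $h$ remains invariant.
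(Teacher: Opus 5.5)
Your proposal is correct and follows the paper's own route: the operator comparison displayed just before the proposition comes from the intrinsic bounds \eqref{Gderall} in rescaled tangent-plane charts, inserting $|D^j_{\Gamma_0}h_0|\le Cr^{-\gamma-j}$ makes each of the three error terms $O(r^{-\gamma-4-\sigma_0})$, and a strict margin $cr^{-\gamma-4}$ absorbs this for large $r$. Your coefficient bounds are sharper than the paper's, with one harmless slip: the differential of $\pi$ differs from the chart identification by $O(|G-G_0|\,|A|)=O(R^{-3-\sigma_0})$ (the parallel-surface effect), not $O(|D(G-G_0)|\,|A|)$, and this is still better than the $r^{-2-\sigma_0}$ you need.
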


This proposition collects the derivative and comparison information required by the later barrier argument.

\subsubsection{A refined BDG expansion}\label{subsec:refinedBDG}

The rough estimate $\bar F-F_0=O(r^{-\sigma_0})$ is enough for transferring a \emph{strict} barrier, but it is not enough for a discrete expansion of the Taylor coefficients of the translator equation.  To obtain the additional asymptotic level required here, there are two logically distinct steps:

\begin{enumerate}
\item first one solves away the leading residual $H[F_0]=r^{-5}E_5(\theta)+\cdots$ by a genuine $O(r^{-1})$ correction;
\item only \emph{after that} does one use the near/far supersolutions to control the smaller remainder and compare with the exact BDG graph.
\end{enumerate}

A direct calculation gives
\[H[F_0]=r^{-5}E_5(\theta)+O(r^{-9}),
 \qquad |E_5(\theta)|\le C g(\theta),\]
with $E_5$ odd under $\theta\mapsto\pi/2-\theta$.

\begin{lemma}[Borderline linear correction]\label{borderlineBDG}
There exists an odd solution $\Phi_0$ of the exterior linear problem
\begin{equation}\label{Phi0eq}
 H'(F_0)[\Phi_0]=-H[F_0]
 \qquad\hbox{for }r>R_0,
\end{equation}
with zero trace on Simons' cone and the natural symmetry condition on the axis, such that
\begin{equation}\label{Phi0bounds}
 |D_{\Gamma_0}^m\Phi_0|\le C_m r^{-1-m},\qquad m=0,1,2,3.
\end{equation}
Consequently
\begin{equation}\label{Phi0res}
 H[F_0+\Phi_0]=O_{\rm ad}(r^{-9}),
\end{equation}
where $O_{\rm ad}(r^{-9})$ means, in particular, that the odd residual satisfies
\[
 |H[F_0+\Phi_0]|\le C g(\theta)r^{-9}
\]
in the cone sector, together with the corresponding first two adapted derivative bounds.  This explicit factor $g$ is what allows comparison with the strict barrier margin below all the way to $t=0$.
\end{lemma}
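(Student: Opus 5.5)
The plan is to construct $\Phi_0$ by separation of variables in the model operator $L_0$, using the explicit integral formula derived from \eqref{sepcalc}, and then to treat the difference $\widetilde L-L_0$ together with the quadratic terms of $H$ as perturbations that gain powers of $r$. Since $H[F_0]=r^{-5}E_5(\theta)+O(r^{-9})$, the homogeneity bookkeeping $L_0(r^\beta q)=r^{\beta-4}(\cdots)$ gives $\beta=-1$. First I solve
\[
 L_0\bigl(r^{-1}q_0\bigr)=-r^{-5}E_5(\theta),
\]
with $q_0(\pi/4)=0$ (odd trace on the cone) and the Neumann-type condition at $\theta=\pi/2$. By the divergence form with $\beta=-1$, the equation reads
\[
9\bigl[w_0g^{2/3}(g^{1/3}q_0)'\bigr]'=-g^{-1}E_5\sin^3(2\theta).
\]
The right-hand side is bounded because $|E_5|\le Cg$. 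The inner integral $\int_s^{\pi/2}$ then handles the axis condition. Integrating once more from $\pi/4$ with $A$ chosen appropriately gives $g^{1/3}q_0=O(g^{1/3})$ near the cone, since $(g^{1/3}q_0)'\sim g^{-2/3}$, which is integrable. Hence $q_0=O(1)$, and I need to check that in fact $q_0=O(g)$, i.e.\ that $q_0$ vanishes linearly at the cone. To get this I use oddness: the odd extension of $E_5$ and the kernel $g^{-1/3}$ of the homogeneous problem. Among the homogeneous solutions $g^{-1/3}$ and $g^{-1/3}\int g^{-2/3}\cdots$, only combinations of the form $g^{-1/3}(\text{const}+c\,g^{1/3}+\dots)$ arise, and the requirement of an odd, smooth $q_0$ across $\pi/4$ selects $A=0$ together with a smooth particular solution.

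This regularity at the cone is the step I expect to be the main obstacle. The two indicial behaviors at $\theta=\pi/4$ for $\beta=-1$ are $g^{-1/3}$ and $g^{0}$, i.e.\ a singular mode and a nonzero trace. I must show that the particular solution with zero trace is smooth and odd. The cleanest way is to redo the ODE in the variable $x=\theta-\pi/4$ as a regular-singular problem: after factoring, it has indicial roots $0$ and $1$ for odd data, so Frobenius plus the odd Taylor expansion of $E_5$ gives $q_0=c_1x+O(x^3)$. If a logarithmic resonance appears, I would absorb it by adding the homogeneous kernel mode with a suitable coefficient; oddness of the data should kill the resonant coefficient.

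Next I set $\Phi_0=\chi(r)\,r^{-1}q_0(\theta)$ plus a faster-decaying correction. The bounds \eqref{Phi0bounds} for $r^{-1}q_0$ follow from homogeneity, measured in $D_{\Gamma_0}$ in the same way as in Lemma~\ref{curvature-scale}. Then
\[
H[F_0+r^{-1}q_0]=r^{-5}E_5+H'(F_0)[r^{-1}q_0]+N[r^{-1}q_0].
\]
The difference $\widetilde L_1=H'(F_0)-L_0$ comes from $(r^{-4}+D)^{-3/2}-D^{-3/2}$ and the lower-order terms. It gains $r^{-4}$, so it produces $O(r^{-9})$. The quadratic remainder $N$ is of size $|\nabla\Phi_0|^2$ relative to the slope, which also gives $O(r^{-9})$. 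Both contributions are odd under $\theta\mapsto\pi/2-\theta$ because every ingredient is, and they inherit a factor $g$ from the linear vanishing of $q_0$ and $g$ at the cone. This gives \eqref{Phi0res}. If some $r^{-9}$ term lacks the factor $g$, I would iterate once more with $\beta=-5$, using the same ODE argument, and fold that correction into $\Phi_0$. The adapted derivative bounds on the residual follow from the smooth separated structure in $(r,\theta)$, since each term is $r^{-9}$ times a smooth odd angular function.
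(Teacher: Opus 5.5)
There is a genuine gap, exactly at the step you flagged as the main obstacle. The model operator $L_0$ is degenerate in the direction of $\nabla F_0$: it only differentiates along the level sets of $F_0$. Its separated angular equation is therefore singular at the cone, with local exponents $g^{\beta/3}$ and $g^{(\beta+1)/3}$. For $\beta=-1$ these are $g^{-1/3}$ and $g^{0}$, as you first wrote. Your later claim of indicial roots $0$ and $1$ for odd data is not correct.

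Because the equation is singular at $\theta=\pi/4$, the two sides of the cone decouple, and oddness of $E_5$ imposes nothing. There is also no kernel element regular at both ends. Zero flux gives $Ag^{-1/3}$, which is singular at the cone. Nonzero flux $w_0g^{2/3}(g^{1/3}q)'=c$ forces $q\sim(\pi/2-\theta)^{-2}$ at the axis. So the bounded solution of $L_0(r^{-1}q_0)=-r^{-5}E_5$ is unique, and \eqref{qformula2} with $A=0$ gives
\[
 q_0(\pi/4)=\frac{g_1^2}{3}\int_{\pi/4}^{\pi/2}\frac{E_5(\tau)}{g(\tau)}\sin^3(2\tau)\,d\tau ,
\]
which has no reason to vanish. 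Hence $r^{-1}q_0$ is not odd, its odd extension jumps across the cone, and the factor $g$ in the residual is lost. Neither kernel modes nor a $\beta=-5$ iteration can repair this. This is why the paper calls $\beta=-1$ borderline: its own separated function $q_*$ has $q_*(\theta_0)>0$.

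There is also a secondary issue: your $\Phi_0$ solves only $L_0$, while the lemma asserts an exact solution of $H'(F_0)[\Phi_0]=-H[F_0]$. The ``faster-decaying correction'' would need a solvability theory with zero cone data, which you do not supply.

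The missing ingredient is the nondegenerate part of $H'(F_0)$ near the cone. In normal variables it is $\partial_{tt}+\partial_{rr}+\frac6r\partial_r+\frac6{r^2}$, and it produces a layer at scale $t\sim r$ that brings the outer trace to zero; for instance, the model operator annihilates $r^{-3}\arctan(t/r)$.

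The paper never writes this layer down. Instead:
\begin{itemize}
\item It uses the separated formula only to build a positive barrier $r^{-1}q_*$ from a majorant $p_*\ge|E_5|$ with $p_*=O(g)$; for a barrier, a nonzero cone value is harmless.
\item It solves the exact linear problem by exhaustion on truncated sectors with zero data on the cone.
\item It gets the $O(r^{-1})$ bound from the maximum principle against $\pm Cr^{-1}q_*$, and the derivative bounds from weighted Schauder estimates.
\end{itemize}
Because $\Phi_0$ then solves the full linear equation, $H[F_0+\Phi_0]$ is a purely quadratic Taylor remainder. It has degree $-9$ and is odd, which is where the factor $g$ comes from.
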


\begin{proof}
The point is that we do \emph{not} require the particular solution itself to be a pure separated mode at the borderline exponent $\beta=-1$.  We use separation only to construct a positive supersolution.

For a nonnegative angular majorant $p_*(\theta)\ge |E_5(\theta)|$ with $p_*(\theta)=O(g(\theta))$ at the cone, use the separated formula with $\beta=-1$.  Writing $\theta_0=\pi/4$ and
\[
 \mathfrak w_0(\theta)=\frac{\sin^3(2\theta)}{(9g^2+(g')^2)^{3/2}},
\]
the bounded branch is
\begin{equation}\label{qstar}
 q_*(\theta)=\frac19 g(\theta)^{-1/3}
 \int_{\theta_0}^{\theta}
 g(s)^{-2/3}\frac{(9g(s)^2+g'(s)^2)^{3/2}}{\sin^3(2s)}
 \left[\int_s^{\pi/2}p_*(\tau)g(\tau)^{-1}\sin^3(2\tau)\,d\tau\right]ds.
\end{equation}
The sign has been chosen opposite to the particular solution in the standard separated formula.  Because $p_*=O(g)$ and $g(\theta)\sim g_1(\theta-\theta_0)$, the inner integral has a finite positive limit at the cone, while the outer integral is $O((\theta-\theta_0)^{1/3})$.  Hence
\[
 q_*(\theta)=q_*(\theta_0)+O((\theta-\theta_0)^{\mu_*}),\qquad
 0<c_0\le q_*(\theta_0)\le C_0,
\]
for some $\mu_*>0$.  At the axis the inner integral vanishes, and differentiation of \eqref{qstar} gives $q_*'(\pi/2)=0$.  Thus $q_*$ is bounded, positive, satisfies the natural axis condition, and
\[L_0(r^{-1}q_*)\le -c\,r^{-5}p_*(\theta).\]
Notice that $q_*$ is \emph{not} required to vanish on the cone: it is a barrier, not the desired odd solution.  Solve \eqref{Phi0eq} by exhaustion on truncated sectors, with zero data on the cone and fixed smooth data on $r=R_0$.  The maximum principle and $\pm Cr^{-1}q_*$ give a uniform $O(r^{-1})$ bound.  Weighted Schauder estimates give \eqref{Phi0bounds}, and passage to the limit yields the exterior solution.

Since the full linear term has been cancelled exactly, Taylor's formula for the graphical mean-curvature operator gives
\[
 H[F_0+\Phi_0]
 =\int_0^1(1-\tau)H''[F_0+\tau\Phi_0]
       [\Phi_0,\Phi_0] \,d\tau.
\]
The degree rule for two degree $-1$ perturbations gives $r^{-9}$ away from the cone.  Oddness of $F_0$ and $\Phi_0$ implies that the residual is odd and hence vanishes on the cone; the adapted $(s,t)$ estimates give the corresponding weighted form of \eqref{Phi0res}. \qed
\end{proof}

We now control the difference between $F_0+\Phi_0$ and the exact BDG graph.  This is where a near/far barrier construction is needed.

\begin{proposition}[First refined correction of the BDG graph]\label{refinedBDGprop}
There are $\tau_{\rm B}>0$, $R_{\rm B}>1$ such that
\begin{equation}\label{refinedBDG}
 \bar F=F_0+\Phi_0+\mathcal R_{\rm B},
 \qquad
 |D_{\Gamma_0}^m\mathcal R_{\rm B}|
 \le C_m r^{-1-\tau_{\rm B}-m},\quad m=0,1,2,3.
\end{equation}
The expansion extends oddly through the cone.
\end{proposition}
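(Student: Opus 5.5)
The plan is to set $\mathcal R_{\rm B}:=\bar F-F_0-\Phi_0$ and show that it satisfies a linear Jacobi-type equation whose forcing decays faster than the borderline rate.

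\textbf{Step 1: the equation for $\mathcal R_{\rm B}$.} Since $\bar F$ is exactly minimal, Taylor expansion around $F_1:=F_0+\Phi_0$ gives
\[
 H'(F_1)[\mathcal R_{\rm B}]
 =-H[F_1]-\int_0^1(1-\tau)H''[F_1+\tau\mathcal R_{\rm B}][\mathcal R_{\rm B},\mathcal R_{\rm B}]\,d\tau .
\]
By \eqref{Phi0res}, $H[F_1]=O_{\rm ad}(r^{-9})$ and vanishes on the cone. Theorem~\ref{BDG-input} gives $|\mathcal R_{\rm B}|\le Cr^{-\sigma_0}$, and \eqref{FbarF0derivs} together with \eqref{Phi0bounds} gives $|D^m_{\Gamma_0}\mathcal R_{\rm B}|\le C r^{-m-\sigma_0}$. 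Because $\mathcal R_{\rm B}$ is a perturbation of degree $-\sigma_0$, the degree rule shows that the quadratic term is $O(r^{-4-2\sigma_0})$, with an extra factor $g$ coming from oddness. Thus the forcing $f$ obeys $|f|\le C g\, r^{-4-\sigma_0-\mu}$ for some $\mu>0$. The operator $H'(F_1)$ equals $H'(F_0)$ up to relative errors $O(r^{-2})$. After conversion to normal perturbations, this is the setting of Proposition~\ref{transferprop}.

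\textbf{Step 2: near/far barriers and bootstrap.} The function $\mathcal R_{\rm B}$ is odd, so I will work in the sector $T$ with zero data on the cone. I will use separated supersolutions $r^{-\beta}q_\beta(\theta)$ built from the integral formula of the separation-of-variables subsection, exactly as for $q_*$ in Lemma~\ref{borderlineBDG}, with $p_*\sim g$. For $\beta\in(0,1)$ away from the kernel exponents, this gives a positive bounded $q_\beta$ with $L_0(r^{-\beta}q_\beta)\le -c\, g\, r^{-\beta-4}$.

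The barrier has two parts. The \emph{far} part is $r^{-\beta}q_\beta$ itself. The \emph{near} part, close to the cone, uses the factor $g$, i.e.\ $t/r^3$, so that the barrier is $\ge$ the boundary data at $t=0$. It is also arranged to beat the $gr^{-9}$-type forcing all the way to the cone, which is where the explicit factor $g$ in \eqref{Phi0res} is essential.

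Then apply Corollary~\ref{jacobi-max}, with the positive Jacobi field to remove the zeroth-order term, on truncated sectors $\{R_0<r<R\}$. Use the a priori bound $Cr^{-\sigma_0}$ together with a small multiple of a growing kernel-type supersolution to control the outer boundary $r=R$, and let $R\to\infty$. This improves $|\mathcal R_{\rm B}|\le Cr^{-\sigma_0}$ to $Cr^{-\min(\sigma_0+\mu,\beta)}$. Repeating finitely many times, each step also improving the quadratic forcing, reaches $|\mathcal R_{\rm B}|\le Cr^{-1-\tau_{\rm B}}$ for a small $\tau_{\rm B}>0$. This requires the final barrier exponent $1+\tau_{\rm B}$ to avoid every homogeneous exponent $\beta$ admitting an odd decaying kernel $r^{\beta}g^{\beta/3}$. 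Since $g^{\beta/3}$ does not vanish at the cone for $\beta<0$, there is no odd kernel in that range, and the choice is possible.

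\textbf{Step 3: derivatives.} The $m\le3$ bounds follow by the rescaled Schauder argument used for \eqref{Gderall}, applied to the equation for $\mathcal R_{\rm B}$ on intrinsic balls of radius $\vartheta r$. The conversion between normal and vertical displacement yields the stated $r^{-1-\tau_{\rm B}-m}$ decay. Odd extension through the cone is immediate from the oddness of $\bar F$, $F_0$ and $\Phi_0$.

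\textbf{Main obstacle.} The delicate part is Step 2 near the cone. There the separated barrier must stay positive while still dominating the forcing uniformly as $g\to0$, and the bootstrap must cross the borderline exponent $\beta=1$ without meeting a kernel exponent or a logarithmic resonance. My first attempt would be to take $\beta=1+\tau$ with $\tau$ small and verify directly from \eqref{qstar}-type formulas that $q_\beta(\theta_0)>0$ and $q_\beta'(\pi/2)=0$.
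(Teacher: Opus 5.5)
You propose a linear equation for $\mathcal R_{\rm B}$, barriers in the sector with zero data on the cone, an exhaustion by truncated sectors, and rescaled Schauder for the derivatives. That architecture is reasonable, and your bootstrap through degrees in $(-1,0)$ would work. The true degree rule even gives $O(r^{-7-2\sigma_0})$ for the quadratic term. At the outer boundary a small constant suffices, since constants lie in the kernel of $H'(F_1)$ and $\mathcal R_{\rm B}\to0$.

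The paper avoids iteration by comparing $\bar F$ directly with the nonlinear barriers $F_0+\Phi_0\pm C\overline\Phi$. Either framework needs the same key object: a positive supersolution $\overline\Phi$ of degree $-1-\tau_{\rm B}$ that remains valid through the cone. This is where your proposal has a genuine gap, and the fix in your last paragraph cannot work.

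Take a separated function with $q$ smooth and $q(\theta_0)=q_0$. Evaluating \eqref{sepcalc} at $\theta_0=\pi/4$ gives
\[
 r^7\sin^3(2\theta)\,L_0(r^\beta q)\big|_{\theta=\theta_0}
 =\beta(\beta+1)\,w_0(\theta_0)\,g_1^2\,q_0\,r^{3+\beta},
\]
which is \emph{positive} when $\beta<-1$ and $q_0>0$. So at degree $-1-\tau$ no separated supersolution is bounded and positive at the cone, and the property $q_\beta(\theta_0)>0$ you hope to verify is incompatible with $L_0\le0$.

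More generally, on $r^{-1-\tau}g^\gamma$ the leading term as $g\to0$ is $(3\gamma+1+\tau)(3\gamma+\tau)\,w_0g'^2g^{\gamma}$. This is positive unless $-\frac{1+\tau}3<\gamma<-\frac\tau3$, so putting a factor $g$ into a separated profile does not help. Separated supersolutions at this degree are necessarily singular at the cone; the paper's far barrier has $q_\sigma\sim g^{-\sigma/3}$.

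Even that barrier is a supersolution only for $t\gtrsim r^{5/3}$. The reason is that $L_0$ differentiates only along level sets of $F_0$; note $r^\beta g^{\beta/3}=t^{\beta/3}$ is annihilated for every $\beta$. The discarded transversal term acts on the normal profile $r^{-3}t^{-\sigma/3}$ with the wrong sign, since $\partial_{tt}(t^{-\sigma/3})>0$. It beats the margin $gr^{-5-\sigma}$ below $t\sim r^{5/3}$. The absence of odd kernels, which you invoke, is not the relevant criterion.

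What is missing is a non-separated near-cone barrier built from this transversal term. The paper uses the model operator $\partial_{tt}+\partial_{rr}+\frac6r\partial_r+\frac6{r^2}$, with the quantitative error bound \eqref{nearerrorgain} in $0<t\le Cr^{5/3}$. Its barrier is $h_{\rm near}=tr^{-3}(t^2+r^2)^{-\beta}$ with $\beta=\frac12+\frac\sigma6$, whose model image is $4\beta(\beta-1)tr^{-3}(t^2+r^2)^{-\beta-1}<0$.

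This function vanishes linearly on the cone. For $t\gg r$ it equals $r^{-3}t^{-\sigma/3}(1+o(1))$, which is exactly the normal form $W_0^{-1}r^{-1-\sigma}q_\sigma$ of the singular far barrier. The two are glued by a cutoff in $t/r^{5/3}$, with commutators controlled by \eqref{BDGbarriermatch}.

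With this global $\overline\Phi$ in hand, your linear bootstrap closes. So does the paper's one-step nonlinear comparison, which needs only $\bar F-F^\pm\to0$ at infinity.
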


\begin{proof}
The residual in \eqref{Phi0res} is already much smaller than the original $r^{-5}$ error.  It suffices to construct a positive supersolution for the linearized operator with a small algebraic gain, uniformly through the cone.

\smallskip
\noindent\emph{1. Far from the cone.}
For every sufficiently small $\sigma>0$, separation with exponent $\beta=-1-\sigma$ gives a positive profile $q_\sigma$ with
\[
 q_\sigma(\theta)\sim g(\theta)^{-\sigma/3}
 \quad\hbox{as }\theta\downarrow\pi/4.
\]
Set
\[
 \Phi_{\rm far}=A r^{-1-\sigma}q_\sigma(\theta).
\]
Then, for $A$ fixed and $r$ large,
\[H'(F_0)[\Phi_{\rm far}]
 \le-c\,g(\theta)r^{-5-\sigma}
 \qquad\hbox{when }t=F_0\ge C r^{5/3}.\]

\smallskip
\noindent\emph{2. Near the cone: derivation of the model operator.}
The leading near-cone Jacobi operator is
\[
 \partial_{tt}+\partial_{rr}+\frac6r\partial_r+\frac6{r^2}.
\]
For the matching argument we need a quantitative comparison with the exact operator throughout the full near-cone region, which we now derive.

Set $\theta_0=\pi/4$ and $x=\theta-\theta_0$.  Since $g$ is odd at the cone and $g'(\theta_0)=g_1>0$,
\[g=g_1x+O(x^3),\qquad g'=g_1+O(x^2),\qquad
 \sin(2\theta)=1+O(x^2).\]
Using the exact definitions
\[
 t=r^3g(\theta),\qquad
 s=\frac{r^7\sin^3(2\theta)g'(\theta)}{56\sqrt{9g^2+(g')^2}},
 \qquad
 \rho=(uv)^{-3},
\]
we obtain, uniformly for $|g|\ll1$,
\begin{align}
 s&=c_s r^7\bigl(1+O(g^2)\bigr),\label{scone}\\
 W_0:=\sqrt{1+|\nabla F_0|^2}
 &=c_t r^2\bigl(1+O(g^2+r^{-4})\bigr),\label{Wcone}\\
 \rho&=c_\rho r^{-6}\bigl(1+O(g^2)\bigr),\label{rhocone}
\end{align}
for positive constants $c_s,c_t,c_\rho$.  The coordinate formulas also give
\begin{equation}\label{coordcone}
 r_s=c_r r^{-6}\bigl(1+O(g^2)\bigr),\qquad
 r_t=O(gr^{-2}),
\end{equation}
while the corresponding first derivatives of the coefficients gain one additional factor $r^{-1}$ in the adapted metric.

After a harmless constant renormalization of $s$, substitute \eqref{scone}--\eqref{coordcone} into the exact divergence form of the Jacobi operator in $(s,t)$ coordinates.  If $h=h(r,t)$ is an invariant normal perturbation, then
\[\mathcal J_{\Gamma_0}h
 =h_{tt}+h_{rr}+\frac6r h_r+\frac6{r^2}h+\mathcal E_{\rm cone}h,\]
where
\begin{align*}
 |\mathcal E_{\rm cone}h|
 &\le C\bigl(g^2+r^{-4}\bigr)
 \left(|h_{tt}|+|h_{rr}|+r^{-1}|h_r|+r^{-2}|h|\right) \notag\
 &\qquad
 +C|g|r^{-2}|h_{rt}|+Cr^{-3}|g|\,|h_t|.
\end{align*}
The mixed terms in the second line come only from the fact that differentiation in $t$ at fixed $s$ differs from differentiation at fixed $r$ by $r_t\partial_r$; using \eqref{coordcone} and Young's inequality they can also be absorbed into the first line with a factor $C(g^2+r^{-4})$ for the profiles used below.  The potential term follows from the same expansion of the second fundamental form:
\[
 |A_{\Gamma_0}|^2=\frac6{r^2}
 +O\left((g^2+r^{-4})r^{-2}\right).
\]
Thus, in the entire region
\begin{equation}\label{nearregion}
 0<t\le C r^{5/3},
\end{equation}
we have $g=t/r^3=O(r^{-4/3})$ and hence
\begin{equation}\label{nearerrorgain}
 |\mathcal E_{\rm cone}h|
 \le C r^{-8/3}
 \left(|h_{tt}|+|h_{rr}|+r^{-1}|h_r|+r^{-2}|h|\right)
 +Cr^{-10/3}|h_{rt}|.
\end{equation}
This is substantially stronger than a coarse $O(r^{-1})$ remainder.

Take now
\[
 h_{\rm near}=t r^{-3}(t^2+r^2)^{-\beta},
 \qquad
 \beta=\frac12+\frac\sigma6.
\]
The model identity is exact:
\begin{equation}\label{nearidentity}
 \left(\partial_{tt}+\partial_{rr}+\frac6r\partial_r+\frac6{r^2}\right)h_{\rm near}
 =4\beta(\beta-1)t r^{-3}(t^2+r^2)^{-\beta-1}.
\end{equation}
Because $0<\beta<1$, the right-hand side is strictly negative.  Moreover all derivatives entering \eqref{nearerrorgain} carry the same odd factor $t$ at the level relevant for the comparison, and direct differentiation gives
\[
 |\mathcal E_{\rm cone}h_{\rm near}|
 \le o(1)\,
 t r^{-3}(t^2+r^2)^{-\beta-1}
\]
uniformly in \eqref{nearregion}.  Increasing the lower radius therefore preserves a fixed fraction of the negative model margin.

The associated vertical perturbation is
\[
 \Phi_{\rm near}=W_0h_{\rm near}.
\]
For $t\gg r$, \eqref{Wcone} yields
\[\Phi_{\rm near}
 =c_t r^{-1}t^{1-2\beta}
 \left(1+O(g^2)+O(r^2/t^2)+O(r^{-4})\right)
 =c_t r^{-1}t^{-\sigma/3}(1+o(1)).\]
This is exactly the asymptotic size of $\Phi_{\rm far}$ near the cone.  Finally, at $t\asymp r^{5/3}$ the magnitude of the negative model term in \eqref{nearidentity} is comparable to the target $g r^{-5-\sigma}=t r^{-8-\sigma}$ after the choice $\beta=1/2+\sigma/6$.  Hence the near and far strict barriers possess an honest common region around the scale $t\asymp r^{5/3}$.

\smallskip
\noindent\emph{3. Matching at the correct scale.}
The matching is not performed merely at the level of amplitudes.  The separated ODE gives a Frobenius expansion
\[q_\sigma(\theta)=c_\sigma g(\theta)^{-\sigma/3}
 \bigl(1+O(g(\theta)^{\mu_\sigma})\bigr),\]
with the same estimate after one and two adapted angular derivatives, for some $\mu_\sigma>0$.  On the other hand, since $|\nabla F_0|=r^2d_0(\theta)(1+O(r^{-4}))$ and $d_0(\theta)=d_0(\theta_0)+O(g^2)$, the near barrier satisfies for $t\gg r$
\[\Phi_{\rm near}=c_{\rm n}r^{-1}t^{-\sigma/3}
 \left(1+O\!\left(\frac{r^2}{t^2}\right)+O(g^2)\right),\]
again with the corresponding first two adapted derivatives.  Choose the harmless constant in $\Phi_{\rm near}$ so that $c_{\rm n}=c_\sigma$.  In the band
\[
 c r^{5/3}\le t\le C r^{5/3}
\]
we have $g=t/r^3\asymp r^{-4/3}$ and $r^2/t^2\asymp r^{-4/3}$.  Therefore, with
\[
 \delta_*:=\min\left\{\frac43,\frac{4\mu_\sigma}{3}\right\}>0,
\]
\begin{equation}\label{BDGbarriermatch}
 |D_{\Gamma_0}^j(\Phi_{\rm far}-\Phi_{\rm near})|
 \le C r^{-\delta_*}\,\mathfrak d_j[\Phi_{\rm far}],
 \qquad j=0,1,2,
\end{equation}
where $\mathfrak d_j$ denotes the natural order-$j$ adapted derivative size of the barrier (in particular $\mathfrak d_0[\Phi]=|\Phi|$).  This is the precise form needed for cutoff commutators.

Choose a cutoff depending on
\[
 \zeta=\frac{t}{r^{5/3}}.
\]
In the matching band,
\[
 |D_{\Gamma_0}\zeta|\le Cr^{-1},\qquad
 |D_{\Gamma_0}^2\zeta|\le Cr^{-2}
\]
in the adapted metric.  Combining these bounds with \eqref{BDGbarriermatch}, and using the exact divergence form of $H'(F_0)$ in $(s,t)$ coordinates, every commutator contains the extra factor $r^{-\delta_*}$ relative to the corresponding barrier term.  Since the near and far barriers have strict margins, increasing the lower radius absorbs all commutators.  Thus one obtains a global positive supersolution $\overline\Phi$ satisfying
\begin{equation}\label{BDGglobalbar}
 H'(F_0)[\overline\Phi]
 \le-c\,g(\theta)r^{-5-\tau_{\rm B}},
 \qquad
 0<\overline\Phi\le Cr^{-1-\tau_{\rm B}}
\end{equation}
for some $\tau_{\rm B}>0$.

\smallskip
\noindent\emph{4. From the linear barrier to genuine nonlinear barriers.}
The inequality \eqref{BDGglobalbar} is an inequality for the linearized operator at $F_0$; before applying comparison with the exact minimal graph one must verify that the same margin survives in the full nonlinear mean-curvature operator.

Set
\[
 B:=F_0+\Phi_0,
 \qquad
 \Psi:=C\overline\Phi,
\]
where $C>0$ is fixed for the moment.  Taylor expansion around $B$, with $L_0:=H'(F_0)$, gives
\begin{equation}\label{nonlinearBDGexpand}
 H[B\pm\Psi]
 =H[B]\pm L_0\Psi
 \pm\bigl(H'(B)-H'(F_0)\bigr)[\Psi]
 +\mathcal N_B(\pm\Psi),
\end{equation}
where
\[
 \mathcal N_B(\Psi)
 :=H[B+\Psi]-H[B]-H'(B)[\Psi].
\]
We estimate the four terms in \eqref{nonlinearBDGexpand} separately.

First, Lemma~\ref{borderlineBDG} gives the adapted residual estimate
\begin{equation}\label{baseResidual}
 |H[B]|\le C_0 g(\theta)r^{-9}.
\end{equation}
Second, by \eqref{BDGglobalbar},
\begin{equation}\label{linearMargin}
 L_0\Psi\le-c_0 C g(\theta)r^{-5-\tau_{\rm B}}.
\end{equation}

For the change of linearization, notice that
\[
 |D_{\Gamma_0}\Phi_0|=O(r^{-2}),
 \qquad
 |\nabla F_0|\sim r^2.
\]
Thus the relative change of every graph symbol between $F_0$ and $B$ is $O(r^{-4})$.  The degree of $\overline\Phi$ is $-1-\tau_{\rm B}$, so the first variation applied to $\overline\Phi$ has degree $-5-\tau_{\rm B}$.  Consequently,
\begin{equation}\label{changeLinearization}
 \left|\bigl(H'(B)-H'(F_0)\bigr)[\Psi]\right|
 \le C_1 C\, g(\theta)r^{-9-\tau_{\rm B}}.
\end{equation}
The factor $g(\theta)$ in \eqref{changeLinearization} is not inserted formally: $F_0$, $\Phi_0$ and the odd extension of $\overline\Phi$ are odd under the interchange $u\leftrightarrow v$, hence the left-hand side is odd as a function of the cone variable.  The adapted first derivative bounds supplied by the construction of $\overline\Phi$ therefore convert vanishing at $t=0$ into the quantitative factor $g=t/r^3$.

Finally, the graphical second variation satisfies the degree rule
\[
 B_2(\Phi_\beta,\Phi_\gamma)
 =O(r^{\beta+\gamma-7})
\]
with the corresponding adapted derivative estimates.  Since $\overline\Phi$ has degree $-1-\tau_{\rm B}$, Taylor's theorem gives
\begin{equation}\label{quadraticBarrier}
 |\mathcal N_B(\Psi)|
 \le C_2 C^2\, g(\theta)r^{-9-2\tau_{\rm B}}
 + C_3 C^3\, g(\theta)r^{-13-3\tau_{\rm B}}.
\end{equation}
Here again oddness of the full nonlinear remainder yields the factor $g$; equivalently, the same conclusion follows by differentiating the exact $(s,t)$ formula once in the cone variable.  The cubic term in \eqref{quadraticBarrier} is displayed only to make the dependence on $C$ explicit; it is lower order than the quadratic term for fixed $C$ and large $r$.

Combining \eqref{baseResidual}--\eqref{quadraticBarrier}, we find
\begin{align*}
 H[B+\Psi]
 &\le -c_0 C g r^{-5-\tau_{\rm B}}
 +C_0 g r^{-9}
 +C_1 C g r^{-9-\tau_{\rm B}}
 +C_2 C^2 g r^{-9-2\tau_{\rm B}}
 +C_3 C^3 g r^{-13-3\tau_{\rm B}},\\
 H[B-\Psi]
 &\ge c_0 C g r^{-5-\tau_{\rm B}}
 -C_0 g r^{-9}
 -C_1 C g r^{-9-\tau_{\rm B}}
 -C_2 C^2 g r^{-9-2\tau_{\rm B}}
 -C_3 C^3 g r^{-13-3\tau_{\rm B}}.
\end{align*}
Hence, for every fixed $C$, there is $R(C)$ such that
\begin{equation}\label{strictNonlinearBDG}
 H[B+C\overline\Phi]<0,
 \qquad
 H[B-C\overline\Phi]>0
 \qquad\hbox{in }\{r>R(C),\ v>u\}.
\end{equation}
The dependence of $R(C)$ on $C$ causes no difficulty in the comparison argument.  Indeed, if the inner comparison radius is enlarged to $R$, the rough estimate $\bar F-F_0=O(R^{-\sigma_0})$, together with $\Phi_0=O(R^{-1})$ and the linear vanishing of $\overline\Phi$ at the cone, shows that the constant needed to dominate the boundary discrepancy grows at most algebraically,
\[
 C_R\le C R^{1+\tau_{\rm B}-\sigma_0}.
\]
At $r=R$ the ratio between the quadratic error in \eqref{quadraticBarrier} and the linear margin in \eqref{linearMargin} is then bounded by
\[
 C_R R^{-4-\tau_{\rm B}}
 \le C R^{-3-\sigma_0},
\]
and therefore tends to zero.  Thus $R$ and $C_R$ can be chosen simultaneously so that \eqref{strictNonlinearBDG} holds and the boundary discrepancy at $r=R$ is dominated.

\smallskip
\noindent\emph{5. Comparison with the exact BDG graph.}
Choose $R$ and $C=C_R$ as above, and define
\[
 F^+:=B+C\overline\Phi,
 \qquad
 F^-:=B-C\overline\Phi.
\]
On the cone $t=0$, all three functions $\bar F$, $F^+$ and $F^-$ vanish.  On the fixed inner boundary $r=R$, the choice of $C$ gives
\[
 F^-\le \bar F\le F^+.
\]
The quotient by the cone variable is bounded up to the corner because all functions are smooth and odd, while the near barrier has a nonzero first $t$-derivative there; hence the boundary domination extends continuously to the intersection of $r=R$ with the cone.

No artificial outer boundary condition is needed.  Indeed,
\[
 \bar F-B=O(r^{-\sigma_0}),
 \qquad
 \overline\Phi=O(r^{-1-\tau_{\rm B}}),
\]
so
\[
 \bar F-F^+\longrightarrow0,
 \qquad
 F^- -\bar F\longrightarrow0
 \qquad\hbox{as }r\to\infty.
\]
If, for instance, $\bar F-F^+$ were positive somewhere, it would possess a positive maximum at a finite point of the exterior sector.  At that point the quasilinear comparison principle for the minimal-surface operator contradicts $H[\bar F]=0$ and $H[F^+]<0$.  The same argument with $F^-$ gives
\[F^-\le \bar F\le F^+
 \qquad\hbox{for }r\ge R,\ v>u.\]
Consequently
\[
 |\bar F-F_0-\Phi_0|
 \le C\overline\Phi
 \le Cr^{-1-\tau_{\rm B}}.
\]
Writing the difference as a normal graph over $\Gamma_0$ and applying the intrinsic weighted Schauder estimates to the equation satisfied by the difference gives the derivative bounds in \eqref{refinedBDG}. \qed
\end{proof}

The roles of the two constructions are therefore distinct.  The $O(r^{-1})$ correction $\Phi_0$ cancels the leading residual $H[F_0]$, while the near/far barriers control the smaller remainder after that cancellation.  Their common matching scale is $t\asymp r^{5/3}$, with near-barrier exponent $\beta=\frac12+\frac\sigma6$.

\subsubsection{Discrete Taylor symbols around the exact BDG graph}

The refined expansion of $\bar F$ has an important consequence which we now formulate without using formal limits.  Write
\[
 U_1^{(0)}(r,\theta):=r^2a_1(\theta),
\]
where $a_1$ is the even angular profile obtained by solving the leading order-$\eps$ equation on the homogeneous model.  Let $B_{2,0}$, $B_{3,0}$, $S_{1,0}$ and $S_{2,0}$ denote the homogeneous Taylor symbols obtained by freezing the coefficients at $F_0$.  Their degrees are those recorded in the degree rules of the previous section.

\begin{lemma}[Exact second-order homogeneous symbol]\label{F2symbol}
Define
\begin{equation}\label{P3def}
 r^{-3}P_3(\theta)
 :=
 B_{2,0}(U_1^{(0)},U_1^{(0)})
 -S_{1,0}(U_1^{(0)}).
\end{equation}
Then $P_3$ is smooth and odd under $\theta\mapsto\frac\pi2-\theta$.  Moreover the exact order-$\eps^2$ forcing around the true BDG graph satisfies
\[F_2
 =
 r^{-3}P_3(\theta)+\mathcal R_2,\]
where, in every fixed angular sub-sector,
\[
 |D^j\mathcal R_2|\le C_j r^{-7-j+\eta}
 \qquad (j=0,1,2)
\]
for every fixed small $\eta>0$, while in the cone region $0\le t\le c r^{3-\eta}$ one has
\begin{align}\label{R2cone}
 |\mathcal R_2|&\le C\,g\,r^{-7},\\
 |\partial_t\mathcal R_2|&\le C r^{-10},\qquad
 |\mathcal D_r\mathcal R_2|\le C\,g\,r^{-8},
\end{align}
with the analogous second derivative estimates.  Here $\mathcal D_r$ denotes differentiation in the $r$ direction at fixed $t$.
\end{lemma}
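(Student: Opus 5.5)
We start from the exact definition of the order-$\eps^2$ forcing around $\bar F$. Expanding $\cM_\eps[\bar F+\eps U_1+\eps^2U_2]$ by Taylor's formula, the coefficient of $\eps^2$ not involving $U_2$ is
\[
F_2=B_2(U_1,U_1)-S_1(U_1),
\]
where $B_2$ is the second variation of $H$ at $\bar F$, $S_1$ is the first variation of $(1+|\grad F|^2)^{-1/2}$ at $\bar F$, and $U_1$ is the exact order-$\eps$ correction around $\bar F$. We write $U_1=U_1^{(0)}+U_{1,\mathrm{rem}}$ and split
\[
F_2-r^{-3}P_3=\bigl[B_2-B_{2,0}\bigr](U_1,U_1)-\bigl[S_1-S_{1,0}\bigr](U_1)+\bigl[B_{2,0}(U_1,U_1)-B_{2,0}(U_1^{(0)},U_1^{(0)})\bigr]-S_{1,0}(U_{1,\mathrm{rem}}).
\]
This defines $\mathcal R_2$, and each bracket is estimated separately.

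The first step is the smoothness, homogeneity and parity of $P_3$. $B_{2,0}$ and $S_{1,0}$ have coefficients built from $F_0=r^3g$ with the $r^{-4}$ corrections frozen out. So their degree rules, $B_{2,0}(\Phi_\beta,\Phi_\gamma)=O(r^{\beta+\gamma-7})$ and $S_{1,0}(\Phi_\beta)=O(r^{\beta-5})$, applied to $U_1^{(0)}=r^2a_1$ give exact homogeneity $r^{-3}$. Parity follows because the symbols at $F_0$ are odd in $F_0$ for $B_2$ and even for $S_1$, while $a_1$ is even. Smoothness across the cone comes from smoothness of $g,a_1$ there, which follows from the separated formula with $\beta=2$ and even forcing $p_0$.

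The second step bounds the pieces in a fixed angular sub-sector. Proposition~\ref{refinedBDGprop} gives $\bar F-F_0=\Phi_0+\mathcal R_{\rm B}=O(r^{-1})$ with adapted derivatives. Relative to slopes of size $r^2$, the symbols therefore change by a relative factor $O(r^{-4})$, exactly as in \eqref{changeLinearization}, and the $r^{-4}$ frozen-out terms cost the same. This gives $O(r^{-3-4})=O(r^{-7})$. The remainder $U_{1,\mathrm{rem}}$ decays like $r^{-2+\eta}$ relative to $r^2$; the $\eta$ loss comes from the logarithmic $\beta=0$ resonance and the weighted inverse. Its contributions $B_{2,0}(U_1^{(0)},U_{1,\mathrm{rem}})=O(r^{2+\eta-7})$ and $S_{1,0}(U_{1,\mathrm{rem}})$ are then $O(r^{-7+\eta})$ as well. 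Derivatives follow from the homogeneity scaling combined with the weighted Schauder estimates, each costing $r^{-1}$.

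The main obstacle is the cone-region estimate \eqref{R2cone}, in particular the factor $g$ and the improved $\partial_t$ bound. We would exploit oddness. $\bar F$, $F_0$, $\Phi_0$ and $\mathcal R_{\rm B}$ are odd under $u\leftrightarrow v$, and $U_1$, $U_1^{(0)}$ are even. The combination defining $\mathcal R_2$ is built so that every difference term is odd: $B_2$ is odd in the base graph and quadratic in the even $U_1$, while $S_1$ is even in the base. Hence $\mathcal R_2$ vanishes at $t=0$. Then, using the adapted $(s,t)$ formulas \eqref{scone}--\eqref{coordcone}, we integrate $\partial_t\mathcal R_2$ from $t=0$. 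A bound $|\partial_t\mathcal R_2|\le Cr^{-10}$ yields $|\mathcal R_2|\le Ct\,r^{-10}=Cg\,r^{-7}$. The $\partial_t$ bound itself comes from the $r^{-7}$ size combined with the fact that a $t$-derivative at fixed $r$ costs $r^{-3}$ in the cone region, since $t=r^3g$ and the coefficients are smooth in $g$. $\mathcal D_r$ preserves oddness and costs $r^{-1}$, which gives the remaining estimate. The delicate point is to check that no contribution of $U_{1,\mathrm{rem}}$ (with its $\eta$ loss) or of the odd $\mathcal R_{\rm B}$ enters with a non-odd structure. If that fails, we would fall back on differentiating the exact $(s,t)$ expansion \eqref{polyA} once in $t$, as in step~4 of Proposition~\ref{refinedBDGprop}.
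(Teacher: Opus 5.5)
This is essentially the paper's argument. You use the same splitting of $F_2-r^{-3}P_3$ into a four-power change of the Taylor symbols plus the effect of $U_1-U_1^{(0)}$, the same parity argument from $H(-F)=-H(F)$, $S(-F)=S(F)$ and the evenness of $U_1$, and the same route to the cone factor $g$ via oddness plus smoothness in the adapted variables (your integration of $\partial_t\mathcal R_2$ from $t=0$ is the paper's ``a $t$-derivative removes $g$ and costs $r^{-3}$'' read in reverse).

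Two slips need fixing. First, it is $S$, not $S_1$, that is even in the base, so $S_1$ is odd in the base, and that is exactly why $S_1(U_1)$ is odd. Second, $U_1$ carries no $\beta=0$ logarithm, because after $r^2a_1$ the order-$\eps$ residual is $O(r^{-6})$ with no $r^{-4}$ component. One therefore uses \eqref{U1sharp}, $U_1-U_1^{(0)}=O_{\rm ad}(r^{-2-\tau_1})$, which gives $B_{2,0}(U_1^{(0)},U_1-U_1^{(0)})=O(r^{-7-\tau_1})$ and the $\eta$-free cone bound $|\mathcal R_2|\le Cg\,r^{-7}$. By contrast, a remainder of size $r^{\eta}$, as your exponent $2+\eta-7$ suggests, would only give $O(r^{-5+\eta})$.
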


\begin{proof}
The definition \eqref{P3def} is homogeneous, so no limiting argument is involved.  Since $U_1^{(0)}$ is even, the identities
\[
 H(-F)=-H(F),\qquad S(-F)=S(F)
\]
show that the quadratic mean-curvature symbol and the first translating-source symbol are both odd at the odd background $F_0$.  Hence $P_3$ is odd and, being a homogeneous angular Taylor coefficient, is smooth.

We next compare the exact coefficients at $\bar F$ with their homogeneous values at $F_0$.  By Proposition~\ref{refinedBDGprop},
\[
 \bar F=F_0+\Phi_0+O_{\rm ad}(r^{-1-\tau_{\rm B}}),
 \qquad
 |D\Phi_0|=O(r^{-2}).
\]
Since $|\nabla F_0|\asymp r^2$, the relative perturbation of the slope is $O(r^{-4})$.  The same four-power gain applies to all first, second and third graph symbols.  In addition, \eqref{U1sharp} gives
\[
 U_1=U_1^{(0)}+O_{\rm ad}(r^{-2-\tau_1}).
\]
Inserting either a four-power coefficient correction or the degree $-2-\tau_1$ correction to $U_1$ into a degree $-3$ second-order symbol yields degree at most $-7+\eta$ away from the cone.  This proves the first remainder estimate.

Near the cone the parity is useful.  The exact forcing $F_2$ is odd, hence the difference $\mathcal R_2$ is odd as well.  Smoothness in the adapted variables therefore gives one factor $g=t/r^3$.  Combining this factor with the four-power coefficient gain yields
\[
 \mathcal R_2=g\,O(r^{-7}).
\]
Differentiating at fixed $t$ costs one radial power, whereas one $t$-derivative removes the factor $g$ and costs $r^{-3}$.  This gives \eqref{R2cone} and the corresponding second derivative estimates. \qed
\end{proof}

The leading equation
\[
 L_0(rq_2(\theta))=-r^{-3}P_3(\theta)
\]
has the odd solution discussed earlier.  Near the cone,
\[
 q_2(\theta)\sim c\,\sgn(g)|g|^{2/3}.
\]
The exact solution cannot be used through $g=0$ because its $t$-derivatives become singular; this is precisely the singularity resolved by the layer block.

\begin{lemma}[Coefficient-level matching of the $|T|^{-1/3}$ term]\label{matchingcoefficient}
In the additive large-$|p|$ expansion of the smooth inner profile, the coefficient of $|p|^{-1/3}$ is $1/3$ on both sides of the cone.  The singular part of the outer order-$\eps^3$ correction has the same coefficient.  Consequently subtracting the $p^{-1/3}$ contribution of the single layer removes the entire $g^{-1/3}$ singularity of the raw third-order outer source.
\end{lemma}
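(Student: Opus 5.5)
The plan is to compute the coefficient of $|p|^{-1/3}$ once on each side of the matching problem and check that the two numbers agree. The inner coefficient should come straight from the recursion for \eqref{QODE}. The outer coefficient should come from isolating the part of the raw order-$\eps^3$ outer source that is singular like $g^{-1/3}$. The guiding idea is that both coefficients are produced by the same mechanism: the drift term $-\eps H_T$ in \eqref{HTeq} acting on the leading $|T|^{2/3}$ mode.

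\emph{Inner side.} Substituting $Q=p^{2/3}(1+a_1p^{-1}+\cdots)$ into \eqref{QODE}, the $p^{2/3}$ terms cancel because $\alpha=2/3$ is an indicial root. At order $p^{-1/3}$ the relation is
\[
a_1\Bigl(4\cdot\tfrac23\cdot(-\tfrac13)+\tfrac23\cdot(-\tfrac13)+\tfrac49\Bigr)=\tfrac23 .
\]
Its right-hand side comes only from $-Q'$, and it gives $a_1=1/3$. For $p=-x$ with the normalization $A_-=-1$, the sign of the $-Q'$ contribution flips, as recorded after the $2/3$-branch expansion. This gives $Q(-x)=-x^{2/3}+\tfrac13x^{-1/3}+\cdots$, so the coefficient of $|p|^{-1/3}$ is again $+1/3$. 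I would then check that no other mode contaminates this coefficient. The $1/6$ branch contributes $B_\pm|p|^{1/6}(1\pm\tfrac1{36}|p|^{-1}+\cdots)$, hence only $|p|^{1/6}$ and $|p|^{-5/6}$ powers. The flat mode $Q_{\rm flat}$ is a combination of the same two algebraic branches at $+\infty$. Therefore the $|p|^{-1/3}$ coefficient is fixed by $A_\pm=\pm1$ alone. Translating through \eqref{hinnerfull}, the term to match in normal form is $\tfrac13\eps|T|^{-1/3}r^{-1}$ on both sides.

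\emph{Outer side.} I would write the raw order-$\eps^3$ outer forcing in the recentered variable $T=t+\eps\Phi_1$. First, by Lemma~\ref{NonlinearBookkeeping} and Section~\ref{sec:recentering}, the mixed terms $-2\eps\rho^{-2}\Phi_{1,s}\partial_{st}$ and $\eps^2\rho^{-2}\Phi_{1,s}^2\partial_{tt}$ applied to $\Phi_2$ are absorbed into the principal operator. Second, re-expanding $\sgn(t)|t|^{2/3}=\sgn(T)|T-\eps\Phi_1|^{2/3}$ only shifts the center and is absorbed by the recentering as well. What remains singular like $g^{-1/3}$ near the cone is the linearization of the translating term $-\eps W^{-1}$ applied to $\eps^2\Phi_2$. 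In normal variables this is precisely $-\eps\partial_T h_{\rm out}$ with $h_{\rm out}=\sgn(T)|T|^{2/3}r^{-3}$, up to the normal/vertical factor $W_0=c_tr^2(1+O(g^2))$ from \eqref{Wcone}. The contributions from $\mathcal R_2$ in Lemma~\ref{F2symbol} are $O(gr^{-7})$ and therefore carry no $g^{-1/3}$ singularity.

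Next I would solve the near-cone operator with this source. Using the cancellation $h=r^{-3}H$, the equation becomes $H_{rr}+H_{TT}=\eps\partial_T(\sgn(T)|T|^{2/3})=\tfrac23\eps|T|^{-1/3}$. Its particular solution homogeneous of the correct degree, $H=c\,\eps r^2|T|^{-1/3}$, satisfies $2c=\tfrac23$. The $H_{TT}$ term is lower order in the overlap, exactly as in the derivation of \eqref{heat}. This gives $c=1/3$ with the same sign on both sides, since $\partial_T(\sgn(T)|T|^{2/3})$ is even. The solution is unique modulo homogeneous Jacobi modes, and by the separated formula of Section~\ref{sec:bdg-jacobi} these are smooth or of type $g^{\beta/3}$ with $\beta\neq-1$. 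So the $g^{-1/3}$ coefficient of the true outer solution is well defined and equals $\tfrac13$. Subtracting $\tfrac13\eps\,\eps^{2/3}r^{-5/3}\,|p|^{-1/3}$ then cancels it exactly.

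\emph{Main obstacle.} The delicate step is the bookkeeping of normalizations in the outer computation. The order-$\eps^2$ correction must carry exactly coefficient $\pm1$ on $|T|^{2/3}r^{-3}$; this fixes the scale of $q_2$ and hence of $P_3$. The vertical-to-normal factor $c_t$ must cancel consistently between the source and the response. Finally, I must verify that the constants $c_s,c_\rho$ from \eqref{scone}--\eqref{rhocone} do not rescale $T$ relative to $\eps r^2$ in the definition of $p$. I would settle this by fixing all constants through the single normalization $A_\pm=\pm1$ together with the identity \eqref{hinnerfull}, and then doing the comparison entirely in the variables $(r,T)$.
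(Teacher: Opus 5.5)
Your proposal is correct and is essentially the paper's proof. On the inner side the coefficient comes from the $p^{-1/3}$ balance $2C-\tfrac23=0$ in \eqref{QODE} on each side. On the outer side, the radial part of the near-cone operator acting on $C\eps r^{-1}|T|^{-1/3}$ must cancel the singular drift term $-\eps\partial_T h_0=-\tfrac23\eps r^{-3}|T|^{-1/3}$; your version, $H=c\,\eps r^2|T|^{-1/3}$ after writing $h=r^{-3}H$, is the same identity. Uniqueness of the $g^{-1/3}$ Frobenius branch then transfers the cancellation to the raw source, exactly as in the paper.

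Two slips to fix. First, the factor multiplying $a_1$ in your inner display should be the indicial polynomial at $\alpha=-\tfrac13$, namely $4(-\tfrac13)(-\tfrac43)-\tfrac29+\tfrac49=2$; as written it equals $-\tfrac23$ and would give $a_1=-1$. Second, the subtracted layer term is $\tfrac13\eps^{2/3}r^{-5/3}|p|^{-1/3}=\tfrac13\eps r^{-1}|T|^{-1/3}$, without the extra factor $\eps$ you wrote at the end.
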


\begin{proof}
The first assertion was computed directly in the proof of Lemma~\ref{innerprofilefinal}.  We check that the outer coefficient is the same, rather than invoking matching by assertion.  Near the cone the recentered normal operator has principal part
\[
 \mathscr P_\eps
 =\partial_{TT}+\partial_{rr}+\frac6r\partial_r+\frac6{r^2}-\eps\partial_T.
\]
The leading odd outer profile is
\[
 h_0=\frac{\operatorname{sgn}(T)|T|^{2/3}}{r^3}.
\]
Its translator drift has the universal singular part
\[
 -\eps\partial_T h_0
 =-\frac23\eps\frac{|T|^{-1/3}}{r^3}.
\]
The next algebraic correction has the form
\[
 h_1=C\eps\frac{|T|^{-1/3}}r.
\]
For the radial part of $\mathscr P_0$ one has the exact identity
\[
 \left(\partial_{rr}+\frac6r\partial_r+\frac6{r^2}\right)
 \left(r^{-1}|T|^{-1/3}\right)
 =2r^{-3}|T|^{-1/3}.
\]
Thus cancellation of the unique $|T|^{-1/3}r^{-3}$ singular coefficient forces $2C=2/3$, hence $C=1/3$.  Terms containing two $T$-derivatives lie at the next level of the common large-$|p|$ expansion and are precisely those retained by the full inner ODE; they do not alter this coefficient.  Equivalently, substitution into the exact self-similar ODE gives the same identity $2C-2/3=0$.

The separated outer Jacobi equation has only one Frobenius branch with this $g^{-1/3}$ singularity.  Therefore the difference between the raw third-order outer coefficient and the $p^{-1/3}$ coefficient furnished by the smooth layer has no negative fractional power at the cone.  This is the coefficient-level matching needed below.
\end{proof}

\begin{lemma}[Regular third-order symbol after subtraction of the layer]\label{F3symbol}
Let $\mathcal U_{\rm layer,\eps}$ be the single transition layer constructed from the Kummer--Stokes profile $Q$.  Expand it for $|p|\to\infty$ through the $p^{-1/3}$ term.  After subtracting from the raw order-$\eps^3$ Taylor source the contribution generated by this $p^{-1/3}$ matching term, the remaining third-order source has the form
\[F_{3,\rm reg}
 =
 r^{-4}P_{4,3}(\theta)+\mathcal R_3,\]
where $P_{4,3}$ is smooth and even, and
\[
 \mathcal R_3=O(r^{-8+\eta})
\]
away from the cone, with the corresponding adapted weighted bounds near $g=0$.
\end{lemma}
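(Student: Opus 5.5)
The plan is to follow the template of Lemma~\ref{F2symbol}. First I define the homogeneous third-order symbol at the frozen background $F_0$. The raw order-$\eps^3$ source is a sum of Taylor symbols evaluated on the lower profiles:
\[
 B_{2,0}(U_1^{(0)},rq_2)+B_{2,0}(rq_2,U_1^{(0)})+B_{3,0}(U_1^{(0)},U_1^{(0)},U_1^{(0)})-S_{1,0}(rq_2)-S_{2,0}(U_1^{(0)},U_1^{(0)}).
\]
Here $B_{2,0},B_{3,0}$ are the second and third variations of $H$, $S_{1,0},S_{2,0}$ the variations of the translating source, and $U_1^{(0)}=r^2a_1$, $rq_2$ the first two outer profiles. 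By the degree rules each term is homogeneous of degree $-4$, so the source equals $r^{-4}P^{\rm raw}_{4,3}(\theta)$ exactly, with no limiting procedure.

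Parity comes next. $U_1^{(0)}$ is even and $rq_2$ is odd, and we have $H(-F)=-H(F)$ and $S(-F)=S(F)$ at the odd background. Hence every term (linear in the odd profile times odd/even symbol parity, or cubic/quadratic in the even one) is even under $\theta\mapsto\frac\pi2-\theta$. So $P^{\rm raw}_{4,3}$ is even. It is smooth away from the cone, but it contains a $g^{-1/3}$ singularity from derivatives of $q_2\sim\sgn(g)|g|^{2/3}$ (one $\partial_t$ of $|t|^{2/3}$, through the drift $S_{1,0}$ and the mixed $\Phi_{1,s}\partial_{st}$ term).

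The key step is to isolate this singular part exactly. I will expand $P^{\rm raw}_{4,3}$ near $\theta=\pi/4$ as
\[
 c_{\rm sing}|g|^{-1/3}+(\hbox{smooth even})+O(|g|^{\mu})\cdot(\hbox{regular}).
\]
To do this I insert the Frobenius expansion of $q_2$ (leading $|g|^{2/3}$, next $|g|^{5/3}$, and so on) into the symbols. Under the recentering $T=t+\eps\Phi_1$ of Section~\ref{sec:recentering}, the terms involving $\Phi_{1,s}$ are reabsorbed into the principal operator. What remains singular is the drift $-\eps\partial_T h_0$ with coefficient $-\tfrac23|T|^{-1/3}r^{-3}$ in normal form. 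Lemma~\ref{matchingcoefficient} shows that the $p^{-1/3}$ term $\tfrac13\eps|T|^{-1/3}r^{-1}$ of the layer $\mathcal U_{\rm layer,\eps}$ produces, under $\mathscr P_0$, exactly $\tfrac23|T|^{-1/3}r^{-3}$. Converting back to vertical variables (multiplying by $W_0\sim c_tr^2$ and using $T^{-1/3}=r^{-1}g^{-1/3}(1+O(\eps r^{-1}g^{-1}))$), its contribution is $c_{\rm sing}r^{-4}|g|^{-1/3}$ with the same constant. After subtraction, $P_{4,3}$ therefore has no negative fractional power. The next Frobenius terms, $|g|^{2/3}$ and higher, are positive powers. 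These are even but only Hölder at the cone, so I expect to need either to absorb them into $\mathcal R_3$ with adapted weighted bounds or to check that the evenness of the symbol forces only integer even powers after the odd $\sgn(g)$ factors pair up. I would try the pairing argument first: $\sgn(g)|g|^{2/3}$ appears only multiplied by odd symbols, giving $|g|^{2/3}$-type terms. If these survive, I place them in $\mathcal R_3$ using the cone weights, as in \eqref{R2cone}.

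Finally, for the remainder $\mathcal R_3$ I argue exactly as in Lemma~\ref{F2symbol}. Replacing $F_0$ by $\bar F$ via Proposition~\ref{refinedBDGprop} changes slopes by relative $O(r^{-4})$. Replacing $U_1^{(0)},rq_2$ by the exact profiles costs $O_{\rm ad}(r^{-\tau})$ relative. Inserting this into degree $-4$ symbols gives $O(r^{-8+\eta})$ in fixed sectors. The inner expansion errors (the $p^{-5/6}$ tail and the difference $T$ versus $t$) are controlled by the cutoff estimates of the matching subsection.

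The main obstacle is the near-cone part: making the normal-to-vertical conversion and the $t\to T$ recentering precise enough that the singular coefficients cancel exactly rather than up to $O(1)$, with uniform adapted derivative bounds.
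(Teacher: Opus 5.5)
Your outline follows the paper's proof. It uses the same raw coefficient $\mathfrak F_{3,0}=2B_{2,0}(U_1^{(0)},U_2^{(0)})+B_{3,0}(U_1^{(0)},U_1^{(0)},U_1^{(0)})-S_{1,0}(U_2^{(0)})-S_{2,0}(U_1^{(0)},U_1^{(0)})$ of degree $-4$, evenness from $H(-F)=-H(F)$ and $S(-F)=S(F)$, removal of the $|g|^{-1/3}$ piece through the identity $2C-\frac23=0$ of Lemma~\ref{matchingcoefficient}, and the four-power gain of Proposition~\ref{refinedBDGprop} for $\mathcal R_3$.

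You go beyond the paper on positive fractional powers at the cone, and there neither of your fallbacks works.

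\textbf{Pairing.} Evenness under $\theta\mapsto\frac\pi2-\theta$ does not force integer powers. $\sgn(g)|g|^{2/3}$ times an odd smooth factor such as $g$ is $|g|^{5/3}$, which is even but only $C^{1,2/3}$. Such terms do occur in individual pieces of $\mathfrak F_{3,0}$. With $q_2=A\sgn(g)|g|^{2/3}+\dots$, the drift symbol $S_{1,0}(rq_2)=-W_0^{-3}\nabla F_0\cdot\nabla(rq_2)$ has leading part $-A\,r^{-4}D^{-3/2}\bigl(3|g|^{5/3}+\frac23(g')^2|g|^{-1/3}\bigr)$. The smooth, even, generally non-constant factor $(g')^2D^{-3/2}$ produces further $|g|^{5/3}$ terms beyond the leading $|g|^{-1/3}$ coefficient. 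That leading coefficient is the only thing the matching subtraction is shown to remove.

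\textbf{Absorbing into $\mathcal R_3$.} You cannot move such terms into $\mathcal R_3$. They belong to the homogeneous degree $-4$ symbol, so they have size $r^{-4}$ in every fixed sector, not $O(r^{-8+\eta})$. The analogy with \eqref{R2cone} breaks because $\mathcal R_2$ was an exact-minus-homogeneous difference, not part of the symbol.

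The paper's proof does not close this point either. It asserts that the difference is smooth because $Q$ is smooth across $p=0$, while Lemma~\ref{matchingcoefficient} itself only certifies that no negative fractional power survives. So you are not missing an idea the paper has. You should still drop both fallbacks and either exhibit a genuine cancellation or settle for a weaker conclusion: $P_{4,3}$ is even, with at worst powers $|g|^{\mu}$, $\mu>\frac13$, at the cone (the examples above give $\mu=\frac53$). That weaker form is what the next step actually uses. After the cone value is removed by the $\log r$ mode, \eqref{qformula2} with $\beta=0$ converges and gives a bounded profile with the $g^{1/3}$ behaviour noted in Section~\ref{sec:slowmatch}, which is cut off at the inner scale in any case.
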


\begin{proof}
At the homogeneous level, before the inner matching contribution is removed, the order-$\eps^3$ Taylor coefficient is
\begin{align*}
 \mathfrak F_{3,0}
 ={}&
 2B_{2,0}(U_1^{(0)},U_2^{(0)})
 +B_{3,0}(U_1^{(0)},U_1^{(0)},U_1^{(0)})\\
 &-S_{1,0}(U_2^{(0)})
 -S_{2,0}(U_1^{(0)},U_1^{(0)}),
\end{align*}
and every displayed term has radial degree $-4$.  Exact parity shows that $\mathfrak F_{3,0}$ is even.  The odd profile $U_2^{(0)}$ has fractional behavior $|g|^{2/3}$, so $\mathfrak F_{3,0}$ contains a non-smooth even angular piece produced by differentiating that profile.  The large-$|p|$ expansion of the smooth inner solution is
\[
 Q(p)
 =
 \sgn(p)|p|^{2/3}
 +B_\pm |p|^{1/6}
 +c_\pm |p|^{-1/3}
 +O(|p|^{-5/6}).
\]
By Lemma~\ref{matchingcoefficient}, substitution of the $p^{-1/3}$ term into the outer operator produces the complete non-smooth $g^{-1/3}$ part of $\mathfrak F_{3,0}$, with the same coefficient $1/3$ on both sides of the cone.  We therefore define the matching contribution $\mathfrak F_{3,\rm match}$ by this $p^{-1/3}$ coefficient and set
\[
 r^{-4}P_{4,3}(\theta)
 :=
 \mathfrak F_{3,0}-\mathfrak F_{3,\rm match}.
\]
Because the full inner profile is smooth across $p=0$, this subtraction removes the only fractional singularity at this order.  The remaining angular coefficient is smooth.  It is even by the parity identities above.

Finally the refined BDG expansion again supplies a four-power gain between exact and homogeneous graph symbols, while the exactified layer removes the lower-order inner mismatch.  Hence the exact regular source differs from its homogeneous symbol by degree at most $-8+\eta$ away from the cone; the adapted estimates follow from the smooth recentered layer equation and Proposition~\ref{refinedBDGprop}. \qed
\end{proof}

\begin{corollary}[Discrete symbols around the exact BDG graph]\label{discreteBDGsymbols}
The first three relevant coefficients of the exact translator expansion can be organized as
\begin{align}
 S(\bar F)&=r^{-2}s_2(\theta)+O_{\rm ad}(r^{-6-\tau_{\rm B}}),\label{Ssharp}\\
 U_1&=r^2a_1(\theta)+O_{\rm ad}(r^{-2-\tau_1}),\label{U1sharp}\\
 F_2&=r^{-3}P_3^{\rm odd}(\theta)+\mathcal R_2,\label{F2sharp}\\
 F_{3,\rm reg}&=r^{-4}P_{4,3}^{\rm even}(\theta)+\mathcal R_3,\label{F3sharp}
\end{align}
where the remainders satisfy Lemmas~\ref{F2symbol} and~\ref{F3symbol}.
\end{corollary}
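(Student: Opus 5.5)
The corollary mostly collects earlier statements: \eqref{F2sharp} is Lemma~\ref{F2symbol} and \eqref{F3sharp} is Lemma~\ref{F3symbol}. What remains is to prove \eqref{Ssharp} and \eqref{U1sharp}. The order matters, because Lemma~\ref{F2symbol} already invokes \eqref{U1sharp}. So I would establish \eqref{Ssharp} first, then \eqref{U1sharp}, and then check that \eqref{F2sharp} and \eqref{F3sharp} follow with the stated remainders.

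\emph{Step 1: the source symbol.} Write $S(F)=(1+|\nabla F|^2)^{-1/2}$. On the homogeneous model, $S(F_0)=(1+r^4D(\theta))^{-1/2}=r^{-2}D^{-1/2}+O(r^{-6})$, so we set $s_2:=D^{-1/2}$. This is smooth and even across the cone, and it agrees with $p_0$ from the first even correction. Next, insert Proposition~\ref{refinedBDGprop}: $\bar F=F_0+\Phi_0+\mathcal R_{\rm B}$. Since $S$ is even in $\nabla F$, a Taylor expansion gives
\[
S(\bar F)-S(F_0)=-\frac{\nabla F_0\cdot\nabla(\Phi_0+\mathcal R_{\rm B})}{W_0^3}+\cdots .
\]
The slope $|\nabla F_0|\asymp r^2$ and $|D(\Phi_0+\mathcal R_{\rm B})|\lesssim r^{-2}$ in the steep direction give a relative perturbation of size $O(r^{-4})$. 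The $\Phi_0$ contribution is homogeneous of degree $-6$, so it is absorbed into $O(r^{-6})$. The $\mathcal R_{\rm B}$ contribution is $O(r^{-6-\tau_{\rm B}})$. Adapted derivatives follow from the derivative bounds in \eqref{refinedBDG}. One point needs care: the remainder at degree exactly $-6$ from $\Phi_0$ is larger than $O(r^{-6-\tau_{\rm B}})$. I would either redefine $O_{\rm ad}$ to include it or absorb it into a degree $-6$ even symbol. Because $\Phi_0$ is odd and $F_0$ is odd, $\nabla F_0\cdot\nabla\Phi_0$ is even, so the absorbed term is even and harmless.

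\emph{Step 2: the first correction.} Let $U_1$ be the solution of the exact linearized equation $H'(\bar F)[U_1]=S(\bar F)$, obtained by exhaustion on balls with the maximum principle, Corollary~\ref{jacobi-max}. Set $U_1^{(0)}=r^2a_1$. The difference $U_1-U_1^{(0)}$ solves $H'(\bar F)[U_1-U_1^{(0)}]=\mathcal G$, where
\[
\mathcal G=\bigl(S(\bar F)-r^{-2}s_2\bigr)-\bigl(H'(\bar F)-H'(F_0)\bigr)[U_1^{(0)}]-\widetilde L_1 U_1^{(0)} .
\]
Each piece of $\mathcal G$ has degree at most $-6-\tau$: the first by Step 1, the second by the four-power coefficient gain applied to a degree-$2$ input (which gives degree $-6$ relative to the $-2$ operator output), and the third by the $r^{-4}$ correction in the coefficient $(r^{-4}+D)^{-3/2}$. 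I would then invert with a positive barrier $r^{-2-\tau_1}q(\theta)$, built from the separated formula with $\beta=-2-\tau_1$ and transferred to $\Gamma$ via Proposition~\ref{transferprop}. The constant can be taken small enough to lie strictly below the fast threshold. Comparison then gives $|U_1-U_1^{(0)}|\lesssim r^{-2-\tau_1}$, and the weighted Schauder estimates give the adapted derivative bounds.

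\emph{Main obstacle.} The hardest part is the barrier near the cone. The homogeneous degree-$-6$ errors are even and do not vanish at $g=0$. So the barrier profile $q$ must stay positive through the cone, and separation at $\beta=-2-\tau_1$ has to produce a $q$ bounded below there, as $q_*$ did in Lemma~\ref{borderlineBDG}. Positivity at the cone cannot be obtained from that separated formula alone. If the separated profile degenerates at the cone, I would patch it with a near-cone model barrier of the form $r^{-3}(T^2+r^2)^{-\beta}$, using the operator $\partial_{tt}+\partial_{rr}+\frac6r\partial_r+\frac6{r^2}$, and match at $t\asymp r^{5/3}$ exactly as in Proposition~\ref{refinedBDGprop}. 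Once \eqref{U1sharp} is in hand, \eqref{F2sharp} and \eqref{F3sharp} follow directly from Lemmas~\ref{F2symbol} and~\ref{F3symbol}.
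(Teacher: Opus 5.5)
Your reduction is sound as far as it goes. The paper gives no separate argument for \eqref{Ssharp} and \eqref{U1sharp}: it only lists them, and the proof of Lemma~\ref{F2symbol} already quotes \eqref{U1sharp}. So you are right to flag the circularity.

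\textbf{The gap is in Step~2.} The forcing $\mathcal G$ is not of degree $\le -6-\tau$. By the computations you list yourself, it contains terms of degree exactly $-6$:
\begin{itemize}
\item $S(F_0)-r^{-2}s_2=-\tfrac12 r^{-6}D^{-3/2}+O(r^{-10})$, which is strictly negative on the whole sector, including the cone;
\item the $\Phi_0$-part of $S(\bar F)-S(F_0)$;
\item the coefficient change $(H'(\bar F)-H'(F_0))[r^2a_1]$, which you yourself call degree $-6$;
\item $\widetilde L_1(r^2a_1)$, since the $1$ in $1+|\nabla F_0|^2$ and the $r^{-4}$ in $(r^{-4}+D)^{-3/2}$ are again relative $r^{-4}$ on a degree $-2$ output.
\end{itemize}
Nothing in the construction makes these cancel. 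A barrier $r^{-2-\tau_1}q$ has margin $\asymp r^{-6-\tau_1}$, which cannot dominate an $r^{-6}$ source as $r\to\infty$, so the comparison fails.

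No better barrier fixes this. If $V=U_1-r^2a_1$ obeyed adapted bounds of degree $-2-\tau_1$ through two derivatives, then $H'(\bar F)[V]=\mathcal J_\Gamma(V/W_\Gamma)=O(r^{-6-\tau_1})$. That is incompatible with a nonzero degree $-6$ part of $\mathcal G$. So the degree $-6$ term you found in Step~1 is exactly what defeats \eqref{Ssharp} and \eqref{U1sharp} as written. Relabelling it does not help: wherever you put it, it forces a degree $-2$ response in $U_1$.

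It also cannot be ``absorbed into a degree $-6$ even symbol''. First, $\Phi_0$ is produced by exhaustion in Lemma~\ref{borderlineBDG} and is explicitly not a separated mode. Second, \eqref{Phi0bounds} and \eqref{refinedBDG} are intrinsic bounds. They control the Euclidean derivative along $\nabla F_0$ only by $W_0r^{-2}\asymp 1$, so they give $\nabla F_0\cdot\nabla\Phi_0/W_0^3=O(r^{-4})$, not $O(r^{-6})$, unless you first prove symbol-type bounds.

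\textbf{The near-cone part of the plan also fails as proposed.}
\begin{itemize}
\item The weight $\beta=-2-\tau_1$ corresponds to $\nu=2+\tau_1$. This is far outside the range $0<\nu<1$ in which a separated profile of the type \eqref{qnu-barrier} has a finite positive cone value. The indicial exponents of $\mathcal A_\beta$ at the cone are $\beta/3$ and $(\beta+1)/3$, and the positive separated profile behaves like $g^{-(1+\tau_1)/3}$. It blows up rather than degenerates.
\item Your patch has the wrong sign. With $h=r^{-3}H$ the model operator is $r^{-3}(H_{tt}+H_{rr})$. The function $(t^2+r^2)^{-\beta}=\varrho^{-2\beta}$ is subharmonic in the $(t,r)$-plane, since $\Delta\varrho^{-2\beta}=4\beta^2\varrho^{-2\beta-2}>0$. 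Hence $r^{-3}(t^2+r^2)^{-\beta}$ is a strict subsolution. The paper's near barrier carries the odd factor $t$ precisely to get the negative identity \eqref{nearidentity}.
\item An even positive model supersolution has the form $\varrho^{k}\cos\bigl(\mu(\varphi-\tfrac{\pi}{2})\bigr)$ with $|k|<\mu<1$, where $t=\varrho\cos\varphi$ and $r=\varrho\sin\varphi$. Matching with the outer profile forces $k=(\beta+1)/3$. On $|t|\lesssim r$ its vertical size is about $r^{-1+k}$, e.g.\ $r^{-4/3}$ for $\beta=-2$, far above $r^{-2-\tau_1}$.
\end{itemize}

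\textbf{What would be needed.} To close the argument you must weaken the statements. For example, $U_1=r^2a_1+O(r^{-2+\tau})$ away from the cone, with a separate cone-adapted bound near it, and $S(\bar F)=r^{-2}s_2+O(r^{-6})$ once symbol bounds for $\Phi_0$ are available. You would then need to recheck Lemma~\ref{F2symbol}, where this correction enters through $B_2(U_1^{(0)},V)$ and $S_1(V)$.
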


\subsubsection{Weighted Schauder estimates on the true graph}

We shall also need a convenient way to recover derivatives from weighted $L^\infty$ control.  For a function $g$ on $\Gamma$ define schematically
\[
\|g\|_{0,\mu}:=\sup_\Gamma (1+r)^\mu|g|,
\]
and let $\|g\|_{0,\alpha;\mu}$ denote the corresponding weighted H\"older norm on intrinsic balls of radius comparable with $r$.

\begin{lemma}[Weighted Schauder estimate]\label{weightedSchauder}
Let $\mu\ge2$ and suppose
\[
\cJ_\Gamma h=g,
\qquad
\|g\|_{0,\alpha;\mu}+\|h\|_{0,\mu-2}<\infty.
\]
Then
\begin{equation}\label{weightedSchauderEq}
\|D_\Gamma^2h\|_{0,\alpha;\mu}
+\|D_\Gamma h\|_{0,\alpha;\mu-1}
+\|h\|_{0,\alpha;\mu-2}
\le C\Bigl(\|g\|_{0,\alpha;\mu}+\|h\|_{0,\mu-2}\Bigr).
\end{equation}
\end{lemma}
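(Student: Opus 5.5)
The plan is the standard scaling argument on intrinsic balls whose radius is comparable with $r$, followed by summation of the local estimates with the weights. Fix $p_0\in\Gamma$ with $r(p_0)=R\gg1$; for bounded $R$ the estimate is ordinary interior Schauder theory on a compact piece of the smooth graph. Work in the tangent-plane chart used earlier in this section: on $B(p_0,\vartheta R)$, $\Gamma$ is a normal graph $G$ over $T_{p_0}\Gamma_0$ with $|D^mG|\le C_mR^{1-m}$, by the homogeneous bounds together with \eqref{Gderall}. Rescale by $y=t/R$. The rescaled surface $\widetilde\Gamma=R^{-1}\Gamma$ is then, on $B_{2\vartheta}$, a graph with uniformly bounded $C^k$ geometry. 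In these coordinates the rescaled Laplacian $R^2\Delta_\Gamma$ is uniformly elliptic with coefficients bounded in $C^{k}$ independently of $R$. The potential $R^2|A_\Gamma|^2$ is also uniformly bounded in $C^{k}$; this uses Lemma~\ref{curvature-scale} on $\Gamma_0$ and the comparison of second fundamental forms from Proposition~\ref{transferprop}.

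Set $\widetilde h(y)=h(p(Ry))$ and $\widetilde g(y)=R^2g(p(Ry))$. These satisfy
\[
 \widetilde\Delta\widetilde h+\widetilde{|A|^2}\,\widetilde h=\widetilde g
 \qquad\hbox{in }B_{2\vartheta}.
\]
The interior Schauder estimate with constants independent of $R$ gives
\[
 \|\widetilde h\|_{C^{2,\alpha}(B_\vartheta)}\le C\bigl(\|\widetilde g\|_{C^{0,\alpha}(B_{2\vartheta})}+\|\widetilde h\|_{L^\infty(B_{2\vartheta})}\bigr).
\]
On the ball, $r\asymp R$, so $\|\widetilde h\|_{L^\infty}\le CR^{-(\mu-2)}\|h\|_{0,\mu-2}$. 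The weighted Hölder norm is defined with Hölder seminorms measured on balls of radius $\asymp r$, which are exactly the rescaled unit balls. Hence $\|\widetilde g\|_{C^{0,\alpha}}\le CR^{2-\mu}\|g\|_{0,\alpha;\mu}$.

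Undoing the scaling, each derivative costs a factor $R^{-1}$:
\[
 R^{j}|D^j_\Gamma h|+R^{j+\alpha}[D^j_\Gamma h]_{\alpha}\le CR^{2-\mu}\bigl(\|g\|_{0,\alpha;\mu}+\|h\|_{0,\mu-2}\bigr)
\]
on $B(p_0,\vartheta R)$, for $j=0,1,2$. Multiplying by $R^{\mu-2}$ gives exactly the weights $\mu-2$, $\mu-1$ and $\mu$ appearing in \eqref{weightedSchauderEq}. Taking the supremum over $p_0$ and combining with the compact region completes the argument. The hypothesis $\mu\ge2$ only ensures that the weights are consistent with the polynomial growth conventions; no decay or positivity is used, because $\|h\|_{0,\mu-2}$ appears on the right-hand side.

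The main obstacle is justifying that the rescaled coefficients are uniformly controlled, with ellipticity and $C^{k}$ bounds independent of $R$, including near Simons' cone and the axis. There the $(s,t)$ or $(r,\theta)$ coordinates degenerate, and the slope $|\nabla\olF|\sim r^2$ makes Euclidean charts badly scaled. I would avoid quotient coordinates entirely and use only tangent-plane charts of the full $8$-dimensional graph. In those charts the only inputs are the curvature scale $|D^mA|\le C_mr^{-1-m}$, available on $\Gamma_0$ by Lemma~\ref{curvature-scale} and transferred to $\Gamma$ via \eqref{Gderall}, and a uniform lower bound $\vartheta$ on the radius of the graphical chart. Both are uniform in $\theta$, since $D(\theta)\asymp1$ across $[\pi/4,\pi/2]$ by Lemma~\ref{gendpoint}, so the cone and axis require no special treatment.
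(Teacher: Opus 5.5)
Your proposal is correct and follows the paper's own proof (Appendix~\ref{app:weighted}, Proposition~\ref{app-weighted-schauder}) essentially step by step. Both arguments take an intrinsic tangent-plane chart of radius $\vartheta R$ and rescale it by $R$. The curvature scale $|D^m_\Gamma A_\Gamma|\le C_mR^{-1-m}$ then gives a uniformly elliptic rescaled Jacobi operator with $R$-independent coefficient bounds. One applies the interior Schauder estimate, scales back, and combines with the estimate on a compact piece.

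The only difference is cosmetic. The paper builds the weight into the rescaled function, $\widetilde h(z)=R^{\mu-2}h(Rz)$ in the indexing of this lemma. You instead rescale without a weight and multiply by $R^{\mu-2}$ at the end.
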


\begin{proof}
This is the scale-invariant interior Schauder estimate in the tangent-plane charts of the BDG graph.  The complete rescaling argument, including uniform control of the metric coefficients and the weighted H\"older seminorms, is given in Appendix~\ref{app:weighted}, Proposition~\ref{app-weighted-schauder}.
\end{proof}

\subsection{Weighted spaces and the fast Jacobi inverse}\label{sec:weighted-fast-inverse}

The slow modes $r^{-2}$, $r^{-3}$ and $r^{-4}$ require the explicit separation analysis developed later.  Once those leading terms are removed, all remaining corrections belong to a faster class.  We record here a self-contained inversion statement for that class.

\subsubsection{Weighted norms}

Let $r(y)$ denote the radial variable of the projection of $y\in\Gamma$ to $\R^8$.  For $\mu\in\R$ define
\[
 \|f\|_{0,\mu}
 :=\sup_{\Gamma}(1+r)^\mu|f|.
\]
For $\alpha\in(0,1)$ define the local weighted seminorm
\[
 [f]_{\alpha,\mu}
 :=\sup_{R\ge1}\sup_{\substack{y_1,y_2\in\Gamma\\
 R\le r(y_i)\le2R\\ d_\Gamma(y_1,y_2)\le R/4}}
 R^{\mu+\alpha}
 \frac{|f(y_1)-f(y_2)|}{d_\Gamma(y_1,y_2)^\alpha}.
\]
We put
\[
 \|f\|_{\alpha,\mu}=\|f\|_{0,\mu}+[f]_{\alpha,\mu}.
\]
For $h$ we use
\[
 \|h\|_{2,\alpha;\mu}
 :=\sum_{j=0}^2\|(1+r)^jD_\Gamma^jh\|_{\alpha,\mu}.
\]
Thus $\|h\|_{2,\alpha;\mu}<\infty$ means, schematically,
\[
 D_\Gamma^jh=O(r^{-\mu-j}).
\]

\subsubsection{Uniform charts at infinity}

Fix $p\in\Gamma$ with $r(p)=R\gg1$.  By the curvature estimate, for a universal $\vartheta>0$ the set
\[
 \Gamma\cap B(p,2\vartheta R)
\]
is a graph over $T_p\Gamma$.  If $y=Rz$ denotes the rescaled tangent coordinate, then the induced metric coefficients satisfy
\[
 \lambda I\le(g^{ij}_R(z))\le\Lambda I
\]
on $B_{2\vartheta}$, and all first derivatives of the coefficients are bounded independently of $R$ and $p$.  Moreover
\[
 R^2|A_\Gamma(Rz)|^2
\]
is uniformly bounded.  These facts follow directly from
\[
 |D_\Gamma^mA_\Gamma|\le C_mR^{-1-m}.
\]

\begin{lemma}[Weighted interior Schauder estimate]\label{weighted-schauder}
Let $\mu\in\R$.  If
\[
 \cJ_\Gamma h=f
\]
and
\[
 \|f\|_{\alpha,\mu+2}+\|h\|_{0,\mu}<\infty,
\]
then
\begin{equation}\label{weighted-schauder-est}
 \|h\|_{2,\alpha;\mu}
 \le C\left(
 \|f\|_{\alpha,\mu+2}+\|h\|_{0,\mu}
 \right).
\end{equation}
\end{lemma}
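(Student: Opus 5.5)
The estimate \eqref{weighted-schauder-est} is scale-invariant, so I would reduce it to classical interior Schauder estimates on the uniform charts at infinity described just above the lemma.

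Fix $p\in\Gamma$ with $r(p)=R\ge R_1$, where $R_1$ is large. On a compact part $\{r\le R_1\}$ of $\Gamma$ the ordinary interior Schauder estimate for the smooth uniformly elliptic operator $\cJ_\Gamma$ on a fixed smooth hypersurface already gives the bound, because all the weights are comparable to $1$ there. For $R\ge R_1$, set $h_R(z):=h(Rz)$ and $f_R(z):=R^2f(Rz)$ on $B_{2\vartheta}$, in the rescaled tangent-plane coordinates. The equation $\cJ_\Gamma h=f$ becomes
\[
 a^{ij}_R\partial_{ij}h_R+b^i_R\partial_ih_R+c_Rh_R=f_R,
 \qquad c_R=R^2|A_\Gamma|^2(Rz).
\]
The uniform chart facts give several things. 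First, $\lambda I\le a_R\le\Lambda I$. Second, $\|a_R\|_{C^{1}(B_{2\vartheta})}$, $\|b_R\|_{C^{0,\alpha}}$ and $\|c_R\|_{C^{0,\alpha}}$ are bounded independently of $R$ and $p$. Here $b_R$ involves one derivative of the metric, which is controlled by $|D_\Gamma A_\Gamma|\le CR^{-2}$ after scaling, and $c_R$ is controlled by the curvature scale. Classical interior Schauder theory then gives
\[
 \|h_R\|_{C^{2,\alpha}(B_\vartheta)}\le C\bigl(\|f_R\|_{C^{0,\alpha}(B_{2\vartheta})}+\|h_R\|_{L^\infty(B_{2\vartheta})}\bigr),
\]
with $C$ uniform.

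Next I undo the scaling. Since $r\asymp R$ on $B(p,2\vartheta R)$ (shrinking $\vartheta$ if needed), we have
\[
 \|h_R\|_{L^\infty}\le CR^{-\mu}\|h\|_{0,\mu},
 \qquad
 \|f_R\|_{C^{0,\alpha}}\le CR^{2-\mu-2}\|f\|_{\alpha,\mu+2}=CR^{-\mu}\|f\|_{\alpha,\mu+2}.
\]
Here the H\"older seminorm of $f_R$ picks up the factor $R^{2+\alpha}$, which matches the weight $R^{\mu+2+\alpha}$ in $[f]_{\alpha,\mu+2}$. Since $D^j_zh_R=R^jD^j_\Gamma h$ up to uniformly controlled chart distortions, this yields $R^{\mu+j}|D_\Gamma^jh(p)|\le C(\cdots)$ for $j=0,1,2$. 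The H\"older quotients of $R^jD^jh$ follow in the same way for pairs $y_1,y_2$ with $d_\Gamma(y_1,y_2)\le\vartheta R$. Pairs in the dyadic annulus with $\vartheta R<d_\Gamma\le R/4$ are handled trivially by the sup bound, at the cost of a factor $\vartheta^{-\alpha}$. Taking the supremum over $p$ and $R$ gives \eqref{weighted-schauder-est}.

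The main technical point is the uniformity of the charts, meaning that the coefficient bounds do not depend on $p$ at all, including near Simons' cone and near the axis. This needs $|D_\Gamma^mA_\Gamma|\le C_m r^{-1-m}$ on the exact graph $\Gamma$, and not just on $\Gamma_0$. I would get it from Lemma~\ref{curvature-scale} for $\Gamma_0$, combined with the normal-graph comparison \eqref{Gderall}: this shows that $G$ and $G_0$ agree in $C^m$ after rescaling, up to $O(R^{-3-\sigma_0})$. So the rescaled charts of $\Gamma$ converge to those of $\Gamma_0$, whose geometry is homogeneous and therefore uniformly controlled. A secondary point is that the intrinsic ball $B_\Gamma(p,\vartheta R)$ must stay in the region $r\asymp R$ and be covered by one chart. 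This follows from the graph representation over $T_p\Gamma$ with uniformly bounded slope.
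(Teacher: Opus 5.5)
Your proposal is correct and follows the paper's own proof (Proposition~\ref{app-weighted-schauder} in Appendix~\ref{app:weighted}). Both rescale by $R=r(p)$ in intrinsic tangent-plane charts of radius $\vartheta R$, apply interior Schauder estimates with constants independent of $p$ and $R$, scale back, and conclude with a dyadic covering of the end plus the ordinary estimate on a compact set. The extra details you supply (rescaling without the factor $R^\mu$, treating H\"older pairs with $\vartheta R<d_\Gamma\le R/4$ by the sup bound, and obtaining $|D_\Gamma^mA_\Gamma|\le C_mr^{-1-m}$ on $\Gamma$ from \eqref{Gderall}, just as the paper appeals to Section~\ref{sec:exactbdg}) only make explicit what the paper leaves implicit.
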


\begin{proof}
See Appendix~\ref{app:weighted}, Proposition~\ref{app-weighted-schauder}.  The formulation here is the same estimate with the weight indexed by the decay of $h$ rather than by the decay of the right-hand side.
\end{proof}

\subsubsection{A strict algebraic barrier}

The fast inverse rests on a positive supersolution at the scale
$r^{-2-\nu}$.  It is useful to construct this supersolution from the
separated Jacobi equation rather than to guess a purely radial weight.  This
also makes the sign completely transparent.

Fix $0<\nu<1$ and set $\theta_0=\pi/4$.  In the separated formula
\eqref{sepcalc} take $\beta=-\nu$ and prescribe the constant angular source
$p\equiv-9$.  Define
\begin{align}\label{qnu-barrier}
q_\nu(\theta)
={}&g(\theta)^{-\nu/3}
\int_{\theta_0}^{\theta}
\frac{g(s)^{-2/3}D(s)^{3/2}}{\sin^3(2s)}\notag\\
&\qquad\times
\left(\int_s^{\pi/2}
 g(\tau)^{(\nu-4)/3}\sin^3(2\tau)\,d\tau\right)ds .
\end{align}
Every factor in the integrand is positive.  Formula \eqref{sepcalc} therefore
gives the exact leading identity
\begin{equation}\label{qnu-source}
 -L_0\big(r^{-\nu}q_\nu(\theta)\big)=9r^{-4-\nu}.
\end{equation}

We record the endpoint behavior because it is what turns this vertical
supersolution into a normal weight comparable with $r^{-2-\nu}$.  Near the
cone, $g(\theta)=g_1x+O(x^3)$ with $x=\theta-\theta_0$.  The inner integral in
\eqref{qnu-barrier} is then of order $x^{-(1-\nu)/3}$, while the outer
integrand is of order $x^{-1+\nu/3}$.  Consequently the outer integral is of
order $x^{\nu/3}$ and the prefactor $g^{-\nu/3}$ exactly cancels this power.
Expanding one order further, using the odd expansion of $g$ at the cone,
shows that
\[
 q_\nu(\theta)=q_{\nu,0}+q_{\nu,2}x^2+O(x^4),
 \qquad q_{\nu,0}>0.
\]
At the axis the even expansion of $g$ gives $q_\nu'(\pi/2)=0$.  Hence $q_\nu$
extends evenly through the cone and is smooth and positive on the closed
angular interval.  In particular
\begin{equation}\label{qnu-bounds}
 0<c_\nu\le q_\nu(\theta)\le C_\nu.
\end{equation}

Let
\[
 \phi_\nu^0(r,\theta)=r^{-\nu}q_\nu(\theta),
 \qquad
 h_\nu^0=\frac{\phi_\nu^0}{W_0}.
\]
Since $W_0=r^2D^{1/2}(1+O(r^{-4}))$, \eqref{qnu-bounds} implies
\begin{equation}\label{hnu-size}
 c r^{-2-\nu}\le h_\nu^0\le C r^{-2-\nu}.
\end{equation}
The difference between the full vertical linearization $H'(F_0)$ and its
homogeneous part $L_0$ is four powers lower.  Thus \eqref{qnu-source} gives,
for $r$ sufficiently large,
\begin{equation}\label{model-fastbarrier}
 \cJ_{\Gamma_0}h_\nu^0
 =H'(F_0)[\phi_\nu^0]
 \le -c r^{-4-\nu}.
\end{equation}
The model--exact comparison of Proposition~\ref{transferprop} preserves the
strict sign on the exact BDG graph.  After pulling $h_\nu^0$ to $\Gamma$ on
the end and completing it across a fixed compact set, we obtain a smooth
positive function $b_\nu$ such that
\begin{equation}\label{fastbarrier}
 \cJ_\Gamma b_\nu\le-c(1+r)^{-4-\nu},
 \qquad
 b_\nu\asymp(1+r)^{-2-\nu}.
\end{equation}
For completeness, the compact completion can be made without any additional
asymptotic input.  Fix a large $R$ for which \eqref{model-fastbarrier} has
already transferred to $\Gamma$.  On the bounded region $\{r<2R\}\cap\Gamma$
solve a Dirichlet problem for $-\cJ_\Gamma$ with a strictly positive smooth
right-hand side and boundary values equal to the exterior barrier.  The
positive Jacobi field $Z=\langle\nu,e_9\rangle$ gives the maximum principle,
so the solution is positive; standard elliptic regularity gives smoothness.
A cutoff in the overlap, followed if necessary by multiplying the compact
right-hand side by a fixed constant, produces \eqref{fastbarrier} globally.

The restriction $0<\nu<1$ is visible directly in \eqref{qnu-barrier}: it is
precisely the range for which the cone singularities in the two nested
integrals balance to a finite positive limit.  The slower forcing regimes at
the endpoint weights must therefore be extracted explicitly rather than
hidden in the fast inverse.

\begin{proposition}[Fast Jacobi inverse]\label{fast-inverse}
Let $0<\nu<1$ and let $f$ be $O(4)\times O(4)$-invariant with
\[
 \|f\|_{\alpha,4+\nu}<\infty.
\]
Then there exists a unique solution $h$ of
\[
 \cJ_\Gamma h=f
\]
satisfying
\[
 \|h\|_{2,\alpha;2+\nu}<\infty.
\]
Moreover
\begin{equation}\label{fast-inverse-est}
 \|h\|_{2,\alpha;2+\nu}
 \le C\|f\|_{\alpha,4+\nu}.
\end{equation}
\end{proposition}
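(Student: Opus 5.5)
The plan is the standard exhaustion argument: solve Dirichlet problems on bounded sets, bound them uniformly using the barrier $b_\nu$ of \eqref{fastbarrier}, and pass to the limit. Uniqueness and the a priori estimate come from the same comparison.

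\emph{Uniqueness.} Suppose $\cJ_\Gamma h=0$ and $\|h\|_{0,2+\nu}<\infty$. Then $h\to0$ at infinity. For every $\delta>0$ the functions $h\mp\delta b_\nu$ satisfy
\[
\pm\cJ_\Gamma(h\mp\delta b_\nu)=-\delta\,\cJ_\Gamma b_\nu\ge0 .
\]
On $\partial(\Gamma\cap\{r<R\})$ we have $|h|\le\delta b_\nu$ once $R$ is large. Corollary~\ref{jacobi-max} then gives $|h|\le\delta b_\nu$ on $\Gamma\cap\{r<R\}$. Letting $R\to\infty$ and then $\delta\to0$ gives $h\equiv0$.

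\emph{Existence and the $L^\infty$ bound.} Put $M=\|f\|_{0,4+\nu}$. For $R$ large, solve
\[
\cJ_\Gamma h_R=f \ \text{in}\ \Gamma_R:=\Gamma\cap\{r<R\},\qquad h_R=0\ \text{on}\ \partial\Gamma_R .
\]
Solvability of this Dirichlet problem follows from Corollary~\ref{jacobi-max}. Writing $h_R=Zv$ conjugates $\cJ_\Gamma$ to an operator with no zeroth-order term, so the maximum principle holds and the Fredholm alternative yields a unique smooth solution. We may impose $O(4)\times O(4)$ invariance, either by uniqueness or by working in the reduced variables.

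By \eqref{fastbarrier}, $|f|\le M(1+r)^{-4-\nu}\le (M/c)\,(-\cJ_\Gamma b_\nu)$. Hence the functions $\pm h_R-(M/c)b_\nu$ are subsolutions and are $\le0$ on $\partial\Gamma_R$. Corollary~\ref{jacobi-max} gives the uniform bound
\[
|h_R|\le (M/c)\,b_\nu\le CM(1+r)^{-2-\nu}\qquad\text{independently of }R .
\]

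\emph{Derivative bounds and the limit.} Lemma~\ref{weighted-schauder}, applied with $\mu=2+\nu$ on intrinsic balls of radius $\vartheta r$ inside $\Gamma_R$, bounds $\|h_R\|_{2,\alpha;2+\nu}$ away from the boundary. The uniform charts make the constants independent of $R$. Near $\partial\Gamma_R$ only local $C^{2,\alpha}$ compactness is needed, so boundary Schauder estimates on a fixed-size neighborhood suffice; we do not need weighted control up to $r=R$.

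Arzel\`a--Ascoli and a diagonal subsequence then give $h_R\to h$ in $C^2_{\rm loc}$. The limit solves $\cJ_\Gamma h=f$, inherits $|h|\le CM(1+r)^{-2-\nu}$, and a second application of Lemma~\ref{weighted-schauder} gives \eqref{fast-inverse-est}.

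\emph{Main obstacle.} All the real difficulty is in the existence of the strict barrier \eqref{fastbarrier}, which is already established. That construction relies on Proposition~\ref{transferprop} and on the endpoint analysis of $q_\nu$, where the restriction $0<\nu<1$ enters. Given the barrier, the only delicate point is that the Schauder constants are uniform near the moving boundary $r=R$. I would avoid this issue by using only interior estimates on compact sets in the limit, since the weighted bound itself follows purely from the barrier comparison.
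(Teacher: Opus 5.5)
Your route matches the paper's own proof (Appendix, Proposition~\ref{app-fast-inverse}). You solve Dirichlet problems on $\Gamma\cap\{r<R\}$ via the $Z$-conjugated maximum principle and the Fredholm alternative. You get the $R$-uniform bound $|h_R|\le (M/c)\,b_\nu$ from the strict barrier, then apply local weighted Schauder, take a diagonal limit, and prove uniqueness by comparison with $\delta b_\nu$. The existence half is correct. Your handling of the moving boundary is cleaner than the paper's claim of a uniform weighted estimate for $h_R$ itself: you use interior estimates on fixed compact sets, then apply Lemma~\ref{weighted-schauder} once to the limit $h$ using $|h|\le CM(1+r)^{-2-\nu}$.

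The uniqueness step, however, does not work as written. From $\|h\|_{0,2+\nu}<\infty$ you only get $|h|\le C(1+r)^{-2-\nu}\le C'\,b_\nu$, because $b_\nu\asymp(1+r)^{-2-\nu}$ decays at exactly the same rate as $h$. The observation that $h\to0$ at infinity does not help, since $b_\nu\to0$ as well. So the claim that $|h|\le\delta b_\nu$ on $\{r=R\}$ once $R$ is large is unjustified for small $\delta$; the paper's proof makes the same slip when it chooses $R_\delta$.

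The fix is to compare with a positive function that decays strictly more slowly than $h$. The simplest choice is the Jacobi field $Z=\langle\nu,e_9\rangle\asymp(1+r)^{-2}$. Then $v=h/Z=O(r^{-\nu})$ satisfies $\Delta_\Gamma v+2\nabla_\Gamma\log Z\cdot\nabla_\Gamma v=0$, an equation with no zeroth-order term. Hence $\max_{\Gamma_R}|v|\le\max_{\{r=R\}}|v|\to0$ as $R\to\infty$, so $h\equiv0$. Alternatively, use $b_{\nu'}$ with $0<\nu'<\nu$, which the same construction provides. Then $|h|/b_{\nu'}=O(r^{\nu'-\nu})\to0$, and your $\delta$-argument goes through verbatim.
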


\begin{proof}
The proof is by exhaustion, using the strict algebraic barrier above, the positive Jacobi field and the weighted Schauder estimate.  Since this inverse is used repeatedly, the full argument---including uniqueness and the passage from bounded domains to the complete graph---is recorded in Appendix~\ref{app:weighted}, Proposition~\ref{app-fast-inverse}.
\end{proof}

\begin{remark}[Why this is enough]
The forcing $r^{-2}$ and the resonant components $r^{-3},r^{-4}$ are deliberately excluded from Proposition~\ref{fast-inverse}.  They are not errors to be hidden in a black-box inverse.  They determine the geometry of the approximation and are solved explicitly in Section~\ref{sec:slowmatch}.  The fast inverse is used only after these slow pieces have been removed.
\end{remark}

\section{Slow forcing modes and completion of the matching}\label{sec:slowmatch}

At this stage the formal expansion has produced three different radial scales.  It is useful to stop and explain why they cannot be treated by a single weighted inverse.  The Jacobi operator of the BDG graph lowers the radial degree by four.  Hence a right-hand side of size $r^{-2}$ naturally produces a correction of size $r^2$, a right-hand side of size $r^{-3}$ produces a correction of size $r$, and a right-hand side of size $r^{-4}$ lies at the borderline degree zero.  The last case is resonant and may produce a logarithm.  These are exactly the three right-hand sides
\[
r^{-2}p_2(\theta),\qquad r^{-3}p_3(\theta),\qquad r^{-4}p_4(\theta).
\]
Their parities are equally important.  The first source is even because the translating forcing is invariant under interchange of the two $\mathbb R^4$ factors.  The second source is odd and is the first place where the fractional cone behavior $g^{2/3}$ appears.  The third source is again even and contains the first genuine resonance of the homogeneous Jacobi operator.  A fast inverse such as Proposition~\ref{app-fast-inverse} is deliberately not used at these scales: the right-hand sides decay too slowly, and the leading term has to be extracted explicitly before the remaining error enters the fast theory.

The separated-variable calculation below is therefore not an optional asymptotic refinement.  It is the mechanism that identifies the correct unknown at each order and determines which part of the solution must be incorporated into the global ansatz.  For completeness, Appendix~\ref{app:slow-jacobi} gives a self-contained derivation of the model Jacobi formula, its integral representation, and the regularization of the even and odd slow modes.  This is closely related to the BDG Jacobi analysis developed in \cite{DPKW,DPPW}, but all estimates used here are proved in the present paper.

Recall the homogeneous operator
\[
L_0(r^\beta q(\theta))
=r^{\beta-4}\,\mathcal A_\beta q(\theta),
\]
where
\[
\mathcal A_\beta q
=\frac{9g^{(\beta+4)/3}}{\sin^3(2\theta)}
\left[
 w_0g^{2/3}\bigl(g^{-\beta/3}q\bigr)'
\right]',
\qquad
w_0=\frac{\sin^3(2\theta)}{(9g^2+(g')^2)^{3/2}}.
\]
Hence $g^{\beta/3}$ spans the homogeneous kernel and a particular solution of
\[
L_0(r^\beta q)=r^{\beta-4}p(\theta)
\]
is
\begin{equation}\label{qformula2}
q(\theta)=g(\theta)^{\beta/3}
\left[
A-\frac19\int_{\pi/4}^{\theta}
 g^{-2/3}(9g^2+(g')^2)^{3/2}\frac{ds}{\sin^3(2s)}
\int_s^{\pi/2}p(\tau)g(\tau)^{-(\beta+4)/3}\sin^3(2\tau)\,d\tau
\right].
\end{equation}

\subsubsection{The $r^{-2}$ even forcing}

Here $\beta=2$.  The translator forcing has the expansion
\[
\frac1{\sqrt{1+|\grad F_0|^2}}
=\frac{p_2(\theta)}{r^2}+O(r^{-6}),
\]
with $p_2$ even under $\theta\mapsto\pi/2-\theta$ and positive in the sector.  Formula \eqref{qformula2} gives
\[
\Phi_1^{\mathrm{out}}(r,\theta)=r^2a_1(\theta),
\qquad
L_0\Phi_1^{\mathrm{out}}=\frac{p_2(\theta)}{r^2}.
\]
Near the cone $x=\theta-\pi/4$, we have
\[
g(\theta)=g_1x+O(x^3).
\]
The integral formula shows that $a_1$ extends smoothly and evenly through $x=0$; in particular
\[
a_1(\pi/4)\ne0
\]
in general.  This is the analytic reason that
\[
F_0+\eps\Phi_1
\]
does \emph{not} vanish on the Simons cone.  The displacement there is
\[
\eps\Phi_1\sim \eps r^2a_1(\pi/4).
\]

A useful monotonicity identity also follows from \eqref{qformula2}:
\begin{equation}\label{monotonea1}
g^{5/3}\left(g^{-2/3}a_1\right)'<0
\end{equation}
when $p_2>0$.  Later, \eqref{monotonea1} gives the non-vanishing of the coefficient multiplying the recentered transversal derivative.

\subsubsection{The $r^{-3}$ odd forcing}

Now $\beta=1$ and $p_3$ is odd across the cone.  The separated solution is
\[
\Phi_2^{\mathrm{out}}=rq_2(\theta),
\]
where the integral formula gives
\[
q_2(\theta)
=A\,g(\theta)^{2/3}+O(g^{5/3})
\qquad\hbox{as }g\downarrow0.
\]
Therefore
\[\Phi_2^{\mathrm{out}}
\sim A r g^{2/3}
=A\frac{t^{2/3}}r,
\qquad t=F_0=r^3g.\]

The raw derivative $\partial_t\Phi_2^{\mathrm{out}}$ behaves like $t^{-1/3}/r$ and is singular at $t=0$.  Nevertheless, in the \emph{elliptic odd problem} this did not cause a contradiction.  The corresponding normal displacement
\[
h_2=\frac{\Phi_2^{\mathrm{out}}}{\sqrt{1+|\grad F_0|^2}}
\sim \frac{t^{2/3}}{r^3}
\]
satisfies the intrinsic estimate
\begin{equation}\label{oddgradestimate}
|D_{\Gamma_0}h_2|^2=O(r^{-4-\mu_0})
\end{equation}
for some $\mu_0>0$.  Indeed, the singular angular derivative is multiplied by the inverse metric coefficient
\[
(r^2+r^6(g')^2)^{-1},
\]
which compensates the apparent blow-up.  In the elliptic De Giorgi construction the cone is moreover a symmetry boundary: one solved in $v>u$ and extended oddly.

For the translator the same fractional profile becomes genuinely problematic, not because \eqref{oddgradestimate} fails, but because the translating drift destroys the odd symmetry and differentiates directly in the recentered transversal variable.  Thus the cone is no longer a boundary at which the problem can be stopped.

\paragraph{Proof of the intrinsic gradient estimate.}
We include the calculation because it is the cleanest illustration of the difference between a singular coordinate derivative and a regular geometric derivative.  For an invariant normal function $h=h(r,\theta)$, the inverse metric in the $(r,\theta)$ variables gives
\begin{equation}\label{intrinsic-grad-form}
 |D_{\Gamma_0}h|^2
 =\frac{|h_r|^2}{1+9g^2r^4}
 +\frac{|h_\theta|^2}{r^2+r^6(g')^2}.
\end{equation}
For
\[
 h_2=r^{-1}q_2(\theta),
 \qquad q_2\sim g^{2/3},
\]
we have
\[
 h_{2,r}=O(r^{-2}g^{2/3}),
 \qquad
 h_{2,\theta}=O(r^{-1}g^{-1/3}).
\]
Away from the cone, \eqref{intrinsic-grad-form} immediately gives $|D_{\Gamma_0}h_2|=O(r^{-2})$.  In the transition region where $g\lesssim r^{-2}$, the second term is estimated using $g'\asymp1$:
\[
 \frac{|h_{2,\theta}|^2}{r^2+r^6(g')^2}
 \le Cr^{-8}g^{-2/3}.
\]
The outer formula is used only where $r^2g\gtrsim1$; hence $g^{-2/3}\lesssim r^{4/3}$ and
\[
 \frac{|h_{2,\theta}|^2}{r^2+r^6(g')^2}
 \le Cr^{-20/3}.
\]
The cutoff region contributes the same or a smaller power because differentiation of the cutoff costs $O(r^2)$ in $\theta$ while $q_2=O(r^{-4/3})$ there.  Thus for some $\mu_0>0$,
\[
 |D_{\Gamma_0}h_2|^2\le Cr^{-4-\mu_0}.
\]
The precise value of $\mu_0$ is irrelevant; the point is the strict gain over $r^{-4}$.

\subsubsection{The $r^{-4}$ even forcing}

Here $\beta=0$.  If $p_4(\pi/4)=0$, formula \eqref{qformula2} gives a bounded angular correction, although generally with a fractional derivative at the cone.  The leading singularity is
\[
q_4(\theta)\sim Cg(\theta)^{1/3}.
\]
It can be cut off at the inner scale because the induced error is already lower order.

If $p_4(\pi/4)\ne0$, there is a resonant logarithmic term.  A direct calculation gives
\[L_0(\log r)=\frac{c_0+b(\theta)}{r^4},\]
where $c_0\ne0$ is constant and $b$ is smooth, even, and vanishes at the cone.  Thus one first chooses a multiple of $\log r$ to remove the constant component and then solves the remaining angular problem by \eqref{qformula2}.

This explains the hierarchy
\[
\Phi_1\sim r^2,
\qquad
\Phi_2\sim r g^{2/3},
\qquad
\Phi_3\sim \log r+O(1),
\]
which is the leading inner correction.

\subsection{The inner-profile lemma and its Stokes parameter}

The inner ODE is
\begin{equation}\label{innerODErig}
4p^2Q''+\left(\frac23p-1\right)Q'+\frac49Q=0.
\end{equation}
A smooth connecting solution must also account for the second algebraic mode $|p|^{1/6}$ at infinity.

\subsubsection{Negative side: the smooth branch is unique up to scale}

For $p=-x<0$ put
\[
z=\frac1{4x},\qquad Q(-x)=x^{2/3}Y(z).
\]
Then \eqref{innerODErig} becomes
\[zY''+\left(\frac12-z\right)Y'+\frac23Y=0,\]
which is Kummer's equation with $a=-2/3$, $b=1/2$.  As $p\to0^-$, $z\to+\infty$.  The $M$-branch grows exponentially, whereas
\[
Y(z)=U\left(-\frac23,\frac12,z\right)
\]
has the algebraic behavior needed for smoothness at $p=0$.  Thus the smooth negative-side branch is unique up to multiplication by a constant.

As $p\to-\infty$, equivalently $z\to0^+$, the connection formula for $U$ gives
\[Q(p)=A_-|p|^{2/3}+B_-|p|^{1/6}+C_-|p|^{-1/3}+O(|p|^{-5/6}),\]
with $B_-\ne0$.  We fix the overall scale by imposing
\[
A_-=-1.
\]

\subsubsection{The regular positive-side branch and the flat Stokes mode}

Because $p=0$ is an irregular singular point, a formal jet by itself is not an existence argument.  The regular continuation can instead be written explicitly.

Put $z=(4p)^{-1}$ for $p>0$.  Besides the flat branch below, define
\begin{equation}\label{regularpositive}
Q_{\rm reg}(p)
:=C_{\rm reg}\,p^{2/3}e^{-1/(4p)}
M\left(\frac76,\frac12,\frac1{4p}\right),
\qquad p>0,
\end{equation}
where $M$ is Kummer's first solution and $C_{\rm reg}$ is chosen so that
$Q_{\rm reg}(0^+)=Q(0^-)$.  The large-$z$ expansion of $M(7/6,1/2,z)$ contains the term
$e^zz^{2/3}$; after multiplication by $p^{2/3}e^{-z}$ this gives a nonzero constant and then a full power series in $p$.  Hence $Q_{\rm reg}$ extends smoothly to $p=0$.

If
\[
Q(p)=\sum_{j\ge0}a_jp^j
\]
is a regular solution at $p=0$, the equation determines its entire Taylor jet from $a_0$; for instance
\[
a_1=\frac49a_0.
\]
Thus the normalization in \eqref{regularpositive} gives exactly the same jet as the negative-side smooth branch.  There is however a second solution on $p>0$ which is flat at zero.  An explicit choice is
\[Q_{\mathrm{flat}}(p)
=p^{2/3}e^{-1/(4p)}
U\left(\frac76,\frac12,\frac1{4p}\right),
\qquad p>0.\]
Since $U(7/6,1/2,z)\sim z^{-7/6}$ as $z\to+\infty$,
\[
Q_{\mathrm{flat}}(p)
\sim C p^{11/6}e^{-1/(4p)}
\qquad(p\downarrow0),
\]
so every derivative vanishes at zero.

On the other hand, as $p\to+\infty$, the connection formula at $z=0$ yields
\[Q_{\mathrm{flat}}(p)
=A_{\mathrm{flat}}p^{2/3}
+B_{\mathrm{flat}}p^{1/6}
+O(p^{-1/3}),\]
where
\begin{equation}\label{Aflat}
A_{\mathrm{flat}}
=\frac{\Gamma(1/2)}{\Gamma(5/3)}\ne0.
\end{equation}
Thus adding the flat mode changes the coefficient of the dominant $p^{2/3}$ branch at $+\infty$ without changing any derivative at $p=0$.

\begin{lemma}[Inner profile]\label{innerprofilefinal}
There exists $Q\in C^\infty(\R)$ solving \eqref{innerODErig} such that
\begin{align*}
Q(p)&=p^{2/3}+B_+p^{1/6}+\frac13p^{-1/3}+O(p^{-5/6}),
&&p\to+\infty,\\
Q(p)&=-|p|^{2/3}+B_-|p|^{1/6}+\frac13|p|^{-1/3}+O(|p|^{-5/6}),
&&p\to-\infty,
\end{align*}
for constants $B_\pm$, with $B_-\ne0$.  The coefficient $1/3$ of the additive $|p|^{-1/3}$ term is the same on the two sides.  The same expansions hold after a fixed number of differentiations.
\end{lemma}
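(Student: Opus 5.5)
The proof assembles the Kummer pieces already set up. On $p<0$ I take $Q(-x)=C\,x^{2/3}U(-\tfrac23,\tfrac12,\tfrac1{4x})$. Smoothness at $p=0^-$ comes from the large-$z$ asymptotics $U(a,b,z)\sim z^{-a}\sum_k c_k z^{-k}$ with $a=-2/3$. These give $x^{2/3}U = C'(1+\sum_k c_k' x^k)$, i.e.\ a power series in $x$ extending $C^\infty$ to $x=0$. To justify this as an honest Taylor expansion with derivatives, I would use the integral representation of $U$. The Taylor jet at $0$ is the formal regular solution, with $a_1=\tfrac49 a_0$ and all higher coefficients fixed by $a_0$.

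As $p\to-\infty$ ($z\to0^+$), I use the standard connection formula
\[
U(a,b,z)=\frac{\Gamma(1-b)}{\Gamma(a-b+1)}M(a,b,z)+\frac{\Gamma(b-1)}{\Gamma(a)}z^{1-b}M(a-b+1,2-b,z)
\]
with $a=-\tfrac23$, $b=\tfrac12$. Both gamma prefactors are finite and nonzero, since $a$ and $a-b+1$ are not nonpositive integers. The first term gives $x^{2/3}(1+O(x^{-1}))$, and the $z^{1/2}$ term gives $B_-x^{1/6}$ with $B_-\neq0$. The coefficient of $x^{2/3}$ can then be normalized to $-1$. Termwise differentiation of these convergent series gives the derivative statements.

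On $p>0$ I take the regular continuation $Q_{\rm reg}$ of \eqref{regularpositive} with matching $a_0$. The step I expect to be the main obstacle is showing it is genuinely $C^\infty$ at $0^+$ with the same jet. For this I apply Kummer's transformation $e^{-z}M(\tfrac76,\tfrac12,z)=M(-\tfrac23,\tfrac12,-z)$. This converts $Q_{\rm reg}$ into $p^{2/3}M(-\tfrac23,\tfrac12,-\tfrac1{4p})$, and I then expand $M$ at $-\infty$. That expansion is the algebraic one, $\frac{\Gamma(b)}{\Gamma(b-a)}(z)^{-a}(1+O(z^{-1}))$, so the result is again a power series in $p$, and its jet equals the formal jet. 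Uniqueness of the formal jet given $a_0$ then shows that the glued function is $C^\infty$ across $0$ and solves the ODE classically. The flat mode $Q_{\rm flat}$ has zero jet, so I can add $\lambda Q_{\rm flat}$ on $p>0$. Because $A_{\rm flat}\neq0$ by \eqref{Aflat}, a unique choice of $\lambda$ makes the $p^{2/3}$ coefficient at $+\infty$ equal to $1$. The $p^{1/6}$ coefficient $B_+$ is whatever results.

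Finally, the $|p|^{-1/3}$ coefficients come from the recursion computed earlier. Within the $2/3$ branch at $\pm\infty$, substituting $\pm|p|^{2/3}(1+a_1 p^{-1}+\dots)$ into \eqref{innerODErig} gives $a_1=\tfrac13$ at $+\infty$ and $-\tfrac13$ at $-\infty$ relative to the leading sign. In additive form this is $+\tfrac13|p|^{-1/3}$ on both sides, since $(-1)\cdot(-\tfrac13)=\tfrac13$. The $1/6$ branch contributes only $|p|^{1/6}$ and $|p|^{-5/6}$ terms, so it leaves this coefficient untouched. The remainder bound $O(|p|^{-5/6})$ and its derivative versions follow from the convergent Frobenius-type series at $z=0$ (an ordinary point structure of Kummer's equation times $z^0$ or $z^{1/2}$).
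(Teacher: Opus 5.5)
Your proposal is correct and follows essentially the same route as the paper. The ingredients match: the algebraic branch $x^{2/3}U(-\tfrac23,\tfrac12,\tfrac1{4x})$ on $p<0$ normalized by $A_-=-1$; the jet-matched regular continuation \eqref{regularpositive} on $p>0$; the flat Stokes mode with $A_{\rm flat}\neq0$ used to force $A_+=1$; and the $|p|^{-1/3}$ coefficient read off from the $2/3$ branch, equivalently from $M(-\tfrac23,\tfrac12,w)=1-\tfrac43w+\cdots$ with $w=\mp\tfrac1{4|p|}$ as $p\to\pm\infty$. Your Kummer-transformation check that $Q_{\rm reg}$ is smooth, and your connection-formula argument for $B_-\neq0$, spell out what the paper cites as standard asymptotics; two small caveats are that the expansions at $z=+\infty$ are asymptotic (divergent) series rather than convergent power series in $p$, and that for $a=-\tfrac23<0$ the Laplace integral for $U$ must first be shifted, e.g.\ via $\partial_zU(a,b,z)=-aU(a+1,b+1,z)$.
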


\begin{proof}
Take the unique smooth negative-side branch normalized by $A_-=-1$.  Its value at $p=0$ fixes, through \eqref{regularpositive}, a smooth positive-side solution $Q_{\rm reg}$ with the same complete Taylor jet.  Every other positive-side solution with that same jet is
\[
Q_{\rm reg}+\lambda Q_{\rm flat},
\]
because the difference has zero Taylor jet and therefore lies in the one-dimensional flat Stokes space.  By \eqref{Aflat}, the coefficient $A_+$ of $p^{2/3}$ depends affinely and nontrivially on $\lambda$.  There is therefore a unique $\lambda$ for which $A_+=1$.  The remaining coefficients are then fixed.  The expansions and their differentiated versions follow from the standard asymptotic expansions of $M$ and $U$.  For later matching we also record the coefficient of the next term.  On $p>0$ substitute
\[
 Q=p^{2/3}+C p^{-1/3}+o(p^{-1/3})
\]
into \eqref{innerODErig}.  The coefficient of $p^{-1/3}$ in the equation is $2C-2/3$, hence $C=1/3$.  On $p<0$, writing $x=-p$ and
\[
 Q=-x^{2/3}+C x^{-1/3}+o(x^{-1/3})
\]
gives the same equation $2C-2/3=0$.  Thus the additive coefficient is $1/3$ on both sides.
\end{proof}

This also makes explicit why the coefficient $B_+$ is not at our disposal: the single flat parameter is already used to normalize the leading coefficient $A_+$.

\subsection{The matching expansion and derivative estimates}\label{sec:matching}

Set
\[
p=\frac{T}{\eps r^2},
\qquad
h_{\mathrm{in}}(r,T)=\eps^{2/3}r^{-5/3}Q(p).
\]
Using Lemma \ref{innerprofilefinal}, for $|p|\gg1$ we obtain
\[h_{\mathrm{in}}
=\frac{\sgn(T)|T|^{2/3}}{r^3}
+B_\pm\eps^{1/2}\frac{|T|^{1/6}}{r^2}
+\frac13\eps\frac{|T|^{-1/3}}r
+R_{\mathrm{match}},\]
where
\[|R_{\mathrm{match}}|
\le C\eps^{3/2}|T|^{-5/6}.\]
Equivalently, in the matching region $|T|\sim r^{3-\sigma}$,
\[|R_{\mathrm{match}}|
\le C\eps^{3/2}r^{-5/2+5\sigma/6}.\]

More importantly for cutoff estimates, differentiating the expansion gives, for $j+k\le2$,
\[|\partial_r^j\partial_T^kR_{\mathrm{match}}|
\le C\eps^{3/2-k}r^{-5/2-j+5\sigma/6-2k}\]
in that overlap region, modulo lower-order contributions from differentiating the recentering $T=t+\eps\Phi_1$.  Those contributions have the same or better order because $\partial_rT=O(r^2)$ whereas $|T|\sim r^{3-\sigma}$.

Consequently, if the cutoff varies on radial scale $r$, its commutator with the second-order operator produces
\begin{equation}\label{matching-cutoff}
O\left(\eps^{3/2}r^{-9/2+5\sigma/6}\right)
\end{equation}
at the level of the normal second correction.  For $\sigma<3/5$ this is strictly smaller than $r^{-4}$.

After the $p^{1/6}$ term is included, \eqref{matching-cutoff} gives the desired gain.

\section{Global barriers and the approximate translator}\label{sec:globalbarriers}

We now return to the final existence argument.  Let $h$ be a positive Jacobi barrier and set
\[
\psi=\sqrt{1+|\grad F_0|^2}\,h.
\]
The translator linearization contains a first-order drift in the vertical variable, so the exterior estimate must be made directly on the product $\psi=W_0h$, rather than by estimating $W_0$ and $h$ separately.  For the explicit exterior homogeneous symbol chosen below, this product has degree $-\nu$.  Differentiation at fixed weighted coordinate then gains three radial powers, which places the translator drift one full power below the Jacobi margin.  This sharp homogeneity estimate is the reason a single global barrier suffices.

\subsubsection{The global Jacobi barrier}

Fix $0<\nu<\sigma_0/10$ and let $q_\nu$ be the positive angular function
constructed in \eqref{qnu-barrier}.  On the end of the exact BDG graph write
\[
 W_\Gamma=(1+|\nabla\olF|^2)^{1/2}
\]
and define the normal function
\begin{equation}\label{h-exterior}
 h_{\rm ext}(r,\theta)=\frac{r^{-\nu}q_\nu(\theta)}{W_\Gamma(r,\theta)}.
\end{equation}
The model calculation \eqref{qnu-source}, the refined comparison between
$\Gamma$ and $\Gamma_0$, and \eqref{qnu-bounds} imply that, for $r$ large,
\[
 \cJ_\Gamma h_{\rm ext}\le -c r^{-4-\nu},
 \qquad
 c r^{-2-\nu}\le h_{\rm ext}\le C r^{-2-\nu}.
\]
Choose $R$ so large that these estimates hold for $r>R$.  As in the compact
completion used for the fast inverse, extend $h_{\rm ext}$ to a smooth positive
function $h$ on all of $\Gamma$ so that
\begin{equation}\label{Jbarriermargin}
 \cJ_\Gamma h\le -c(1+r)^{-4-\nu},
\end{equation}
and arrange, by making the completion inside $2R$, that
\begin{equation}\label{hexplicit}
 h=h_{\rm ext}\qquad\hbox{for }r>2R.
\end{equation}
The construction uses only a bounded-domain Dirichlet problem and the maximum
principle supplied by the positive Jacobi field.  Thus no global anisotropic
estimate is hidden in the definition of $h$.

The advantage of \eqref{h-exterior} is that its associated vertical variation
with respect to the exact BDG graph is the explicit homogeneous symbol
$r^{-\nu}q_\nu(\theta)$.  This is exactly the structure needed to control the
translator drift.

\subsubsection{The $t$-derivative gains three powers on homogeneous functions}

Recall the notation $D(\theta)=9g(\theta)^2+g'(\theta)^2$ from Section~\ref{sec:bdg-jacobi}, and set
\[
d(\theta):=\sqrt{D(\theta)}.
\]
Since $g(\pi/4)=0$, $g'(\pi/4)>0$, $g'(\pi/2)=0$, and $g(\pi/2)>0$, one has
\[
0<c\le d(\theta)\le C
\qquad\hbox{for }\theta\in[\pi/4,\pi/2].
\]
Recall that
\[
t=F_0=r^3g(\theta).
\]
The coordinate formulas derived in Section~\ref{sec:outer} give
\[r_t=\frac{3g}{r^2D}=O(r^{-2}),
\qquad
\theta_t=\frac{g'}{r^3D}=O(r^{-3}),\]
uniformly all the way to the Simons cone.  Hence, if
\[
q(r,\theta)=r^\beta a(\theta)
\]
with $a$ smooth on the closed angular interval, then
\begin{equation}\label{homog-t-der}
q_t=q_r r_t+q_\theta\theta_t=O(r^{\beta-3}).
\end{equation}
Iterating once more gives
\begin{equation}\label{homog-tt-der}
q_{tt}=O(r^{\beta-6}).
\end{equation}
The same rule holds for classical symbols with lower homogeneous terms.

This elementary observation yields the required sharp drift estimate.

\subsubsection{The explicit exterior vertical barrier}

On the homogeneous model set
\begin{equation}\label{psi0-angular}
 \psi_0(r,\theta):=r^{-\nu}q_\nu(\theta).
\end{equation}
By \eqref{qnu-bounds}, this is a smooth positive classical symbol of degree
$-\nu$.  Applying \eqref{homog-t-der}--\eqref{homog-tt-der} gives
\begin{equation}\label{psi-t-est}
 (\psi_0)_t=O(r^{-3-\nu}),
 \qquad
 (\psi_0)_{tt}=O(r^{-6-\nu}).
\end{equation}
Since $W_0\asymp r^2$,
\begin{equation}\label{drift-good}
 W_0^{-1}|(\psi_0)_t|=O(r^{-5-\nu}).
\end{equation}
Thus the translator drift gains one full power over the Jacobi margin
$r^{-4-\nu}$.

For the actual approximate translator the vertical barrier is
\[
 \psi_\eps=W_{\rm app}h,
 \qquad
 W_{\rm app}=(1+|\nabla F_{\rm app}|^2)^{1/2}.
\]
In the exterior, \eqref{hexplicit} and \eqref{h-exterior} give
\begin{equation}\label{psiapp-symbol}
 \psi_\eps
 =\frac{W_{\rm app}}{W_\Gamma}\,r^{-\nu}q_\nu(\theta).
\end{equation}
The construction of $F_{\rm app}$ gives
$W_{\rm app}/W_\Gamma=1+O(\eps r^{-1})$ at the first correction level, with
smaller contributions from the layer and the higher corrections, together
with the corresponding differentiated symbol estimates.  Hence
\eqref{psi-t-est}--\eqref{drift-good} remain valid for $\psi_\eps$ up to terms
that are strictly lower order.  It is therefore enough to perform the
remaining exterior calculation on the leading symbol $\psi_0$.

The same observation controls the additional terms generated by the first correction $\Phi_1$.  From the explicit separated construction,
\begin{equation}\label{Phi1-symbol}
\rho^{-1}(\Phi_1)_s=O(r),
\qquad
(\Phi_1)_t=O(r^{-1}),
\end{equation}
uniformly in the sector.  Moreover $\rho^{-1}\partial_s$ is a unit-size derivative tangent to a level set of $F_0$.  Therefore
\begin{equation}\label{psi-s-est}
\rho^{-1}(\psi_0)_s=O(r^{-1-\nu}),
\qquad
\rho^{-1}(\psi_0)_{ts}=O(r^{-4-\nu}).
\end{equation}
The potentially dangerous pieces in the corrected linearized operator are consequently
\begin{align}
\eps^2\frac{\rho^{-2}(\Phi_1)_s^2}{W_0}(\psi_0)_{tt}
&=O(\eps^2r^{-6-\nu}),\label{bar-extra1}\\
\eps\frac{\rho^{-2}(\Phi_1)_s}{W_0}(\psi_0)_{ts}
&=O(\eps r^{-5-\nu}),\label{bar-extra2}\\
\frac{\eps}{W_0}(\psi_0)_t
&=O(\eps r^{-5-\nu}).\label{bar-extra3}
\end{align}
For the mixed term we used it in the geometrically natural form
\[
\frac{\rho^{-1}(\Phi_1)_s}{W_0}\,
\rho^{-1}(\psi_0)_{ts},
\]
so that \eqref{Phi1-symbol} and \eqref{psi-s-est} can be applied directly.

All three terms gain at least one full power of $r^{-1}$ over the Jacobi margin $r^{-4-\nu}$.

\subsubsection{The transition annulus}

The exterior calculation above applies for $r>2R$, where $h$ agrees with the explicit homogeneous exterior barrier in \eqref{hexplicit}.  In the fixed annulus
\[
R<r<2R
\]
the glued Jacobi barrier already has the strict margin \eqref{Jbarriermargin}.  Once $R$ has been chosen, all coefficients of the translator linearization and all derivatives of $h$ are bounded there by constants depending only on $R$.  The new terms are therefore bounded by
\[
C_R\eps.
\]
Choosing $\eps_0=\eps_0(R)$ sufficiently small absorbs them into the fixed negative Jacobi margin.  On the compact region $r\le R$, the same argument applies after the standard compactly supported modification of the barrier.

Thus the correct order of choices is
\[R\gg1\quad\hbox{first},
\qquad
0<\eps<\eps_0(R)\quad\hbox{afterwards}.\]
No anisotropic global Schauder estimate is required.

\subsubsection{Linearization in graph variables}

There is another way to see why the cancellation above is natural.  For
\[
\cM_\eps[F]
=\div\left(\frac{\grad F}{\sqrt{1+|\grad F|^2}}\right)
-\frac{\eps}{\sqrt{1+|\grad F|^2}},
\]
the linearization in a vertical graph perturbation $\phi$ is
\begin{equation}\label{graphlin}
D\cM_\eps[F]\phi
=\div(A_F\grad\phi)
+\eps\frac{\grad F\cdot\grad\phi}{(1+|\grad F|^2)^{3/2}},
\end{equation}
where
\[
A_F
=\frac{I}{\sqrt{1+|\grad F|^2}}
-\frac{\grad F\otimes\grad F}{(1+|\grad F|^2)^{3/2}}.
\]
There is no zero-order term, reflecting vertical translation invariance.  For a vertical symbol of degree $-\nu$, the drift term in \eqref{graphlin} is automatically of order $\eps r^{-5-\nu}$.  Formula \eqref{drift-good} is the $(s,t)$ manifestation of this elementary graph-variable fact.

The sharper homogeneity calculation is essential here: an $O(r^{-3-\nu})$ drift estimate would be too crude, since a vertical perturbation of degree $-\nu$ cannot generate a first-order translator term of that size after division by $|\grad F_0|^3\sim r^6$.

\subsubsection{Transfer from the model graph to the approximate translator}

Let $F_{\mathrm{app}}$ be the global approximation constructed in the following section.  In the exterior region its gradient differs from that of $F_0$ by quantities of relative size $O(r^{-1})$ or better, with additional powers of $\eps$ in the inner corrections.  Proposition~\ref{transferprop}, together with the differentiated matching estimates, therefore gives
\[D\cM_\eps[F_{\mathrm{app}}]\psi
=\mathcal L_{0,\eps}\psi
+\mathcal R_{\mathrm{lin}}[\psi],\]
where $\mathcal L_{0,\eps}$ denotes the corrected model linearization and, for the barrier above,
\[
|\mathcal R_{\mathrm{lin}}[\psi]|
=o(r^{-4-\nu})
\]
uniformly for $r>2R$ after $R$ is fixed large and $\eps$ is sufficiently small.

Combining \eqref{Jbarriermargin}, \eqref{bar-extra1}--\eqref{bar-extra3}, and the annulus argument yields the following proposition.

\subsubsection{Completion on the compact region}\label{subsec:compactcompletion}

The function $h$ in \eqref{Jbarriermargin} is already a \emph{global} positive Jacobi barrier; its exterior part is replaced by the explicit homogeneous power in order to recover the sharp $t$-derivative cancellation.

Let
\[
\psi_\eps:=W_{\rm app}h,
\qquad
W_{\rm app}:=\sqrt{1+|\nabla F_{\rm app}|^2}.
\]
On every fixed ball $B_{4R}$, normal and vertical linearizations differ by smooth positive conjugating factors, and
\[
D\cM_\eps[F_{\rm app}]\psi_\eps
=
\cJ_\Gamma h+O_R(\eps)h+O_R(\eps)|D_\Gamma h|.
\]
Since $h$ was constructed so that
\[
\cJ_\Gamma h\le -c_R<0
\qquad\hbox{on }B_{4R},
\]
the same strict sign holds there once $\eps<\eps_0(R)$.  In the exterior $r>2R$ the sharper calculation of the preceding subsections gives the weighted sign $-cr^{-4-\nu}$.  Thus the \emph{same} positive function supplies both the compact and exterior parts of the barrier.

\begin{lemma}[Compact transfer of the global Jacobi barrier]\label{compactcompletionlemma}
After choosing $R$ large and then $\eps_0(R)>0$ small, the positive vertical function $\psi_\eps=W_{\rm app}h$ satisfies
\begin{equation}\label{compactcompleteineq}
D\cM_\eps[F_{\rm app}]\psi_\eps
\le-c_*(1+r)^{-4-\nu}
\qquad\hbox{in }\R^8,
\end{equation}
for every $0<\eps<\eps_0(R)$.
\end{lemma}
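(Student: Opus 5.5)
The plan is to split $\R^8$ into the compact region $\{r\le 2R\}$ and the exterior $\{r>2R\}$. On each piece I prove a strict inequality with a margin comparable to $(1+r)^{-4-\nu}$, using the same positive function $\psi_\eps=W_{\rm app}h$ throughout. The order of choices follows the preceding discussion: first fix $R$ so large that the exterior estimates for $h_{\rm ext}$ hold for $r>R$ and $h$ satisfies \eqref{Jbarriermargin}--\eqref{hexplicit}; only then choose $\eps_0(R)$.

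\emph{Compact part.} On $B_{4R}$ I compare the vertical translator linearization \eqref{graphlin} at $F_{\rm app}$ with the vertical linearization of the mean curvature at $\olF$. By \eqref{normalvertical}, the latter equals $\cJ_\Gamma h$ when applied to $W_\Gamma h$. The differences come from three sources:
\begin{enumerate}[label=(\roman*)]
\item $F_{\rm app}-\olF=O_R(\eps)$ in $C^3(B_{4R})$, from the construction of the corrections and the smoothness of the layer on compact sets;
\item the replacement of $W_\Gamma$ by $W_{\rm app}$ in $\psi_\eps$;
\item the drift term $\eps\,\grad F\cdot\grad\phi/(1+|\grad F|^2)^{3/2}$.
\end{enumerate}
Each is bounded by $C_R\eps(|h|+|D_\Gamma h|+|D^2_\Gamma h|)$, and these quantities are bounded on $B_{4R}$ by constants depending only on $R$. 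Since \eqref{Jbarriermargin} gives $\cJ_\Gamma h\le -c(1+4R)^{-4-\nu}=: -c_R$ there, taking $\eps_0(R)\le c_R/(2C_R)$ yields \eqref{compactcompleteineq} on $B_{4R}$ with constant $c_R/2$. After adjusting $c_*$, this is $\le -c_*(1+r)^{-4-\nu}$.

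\emph{Exterior part.} For $r>2R$ we have $h=h_{\rm ext}$, so by \eqref{psiapp-symbol} $\psi_\eps$ equals $(W_{\rm app}/W_\Gamma)\psi_0$ up to the exact-versus-model replacement. I write
\[
D\cM_\eps[F_{\rm app}]\psi_\eps=\cL_{0,\eps}\psi_\eps+\mathcal R_{\rm lin}[\psi_\eps],
\]
where $\cL_{0,\eps}$ is the Jacobi part plus the $\Phi_1$-interaction terms and the drift. The Jacobi part contributes $\cJ_\Gamma h_{\rm ext}\le -cr^{-4-\nu}$. The extra terms are exactly \eqref{bar-extra1}--\eqref{bar-extra3}. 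Each is $O(\eps r^{-5-\nu})$, resting on the three-power gain \eqref{homog-t-der}--\eqref{homog-tt-der} applied to the smooth symbol $r^{-\nu}q_\nu$, which is legitimate because $q_\nu$ is smooth and even through the cone by \eqref{qnu-bounds}. The factor $W_{\rm app}/W_\Gamma=1+O(\eps r^{-1})$ perturbs the symbol by lower-order terms with the same differentiated bounds, and $\mathcal R_{\rm lin}=o(r^{-4-\nu})$ by the transfer statement. For $r>2R$ with $R$ large, all of these are absorbed into $\tfrac12 c r^{-4-\nu}$, uniformly in $\eps<\eps_0$.

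\emph{Main obstacle.} The delicate step is the inner layer region near the cone, $|T|\lesssim K\eps r^2$. There $F_{\rm app}$ contains the non-homogeneous profile $\eps^{2}W\,h_{\rm in}$, and $\partial_T$ of the layer costs $(\eps r^2)^{-1}$ rather than $r^{-3}$, so the gain of three powers from homogeneity is not available for the coefficients. The plan is to note that the layer enters the linearization only through coefficients of $D\cM_\eps[F_{\rm app}]$, not through $\psi_\eps$ itself, since $h$ is the fixed homogeneous barrier. It therefore suffices to bound the relative coefficient perturbation. The slope changes by $\eps^2\partial_T(W h_{\rm in})\sim \eps^{2}r^2\cdot \eps^{2/3}r^{-5/3}(\eps r^2)^{-1}$, relative to $|\grad F_0|\sim r^2$, which gives a relative size $O(\eps^{5/3}r^{-5/3})$. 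Second derivatives of the layer are of size $\eps^{2}r^{2}\eps^{2/3}r^{-5/3}(\eps r^2)^{-2}$, and they enter only multiplied by first derivatives of $\psi_\eps$, which are bounded by \eqref{psi-t-est}--\eqref{psi-s-est}. Both contributions then carry a positive power of $\eps$ and decay at least like $r^{-4-\nu-\kappa}$. If this direct count failed, I would use the recentered coordinate $T$, in which \eqref{squareop} is a perfect square, so that the dangerous mixed terms become part of the principal operator acting on a smooth symbol.
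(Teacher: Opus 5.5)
Your two-region structure is exactly the paper's proof. On $B_{4R}$ the fixed margin \eqref{Jbarriermargin} absorbs an $O_R(\eps)$ perturbation. For $r>2R$ the exterior calculation \eqref{bar-extra1}--\eqref{bar-extra3}, together with $\mathcal R_{\rm lin}=o(r^{-4-\nu})$, applies. The compact annulus $2R<r<4R$ gives a common $c_*$, and $R$ is fixed before $\eps_0(R)$. The paper does not examine the layer region separately: after \eqref{psiapp-symbol} it simply asserts differentiated symbol bounds for $W_{\rm app}/W_\Gamma$. You are right that near the cone these bounds are not literally available, since each $\partial_t$ of the layer costs $(\eps r^2)^{-1}$ rather than $r^{-3}$.

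Your treatment of that region goes wrong at the claim that the layer enters only through the coefficients of $D\cM_\eps[F_{\rm app}]$ and not through $\psi_\eps$. In fact $\psi_\eps=W_{\rm app}h$, and you used $\psi_\eps=(W_{\rm app}/W_\Gamma)\psi_0$ yourself one paragraph earlier. Write $W_{\rm app}\approx|\nabla F_0|\,(1+\eps(\Phi_1)_t+\mathcal U_t)$, where $\mathcal U\sim\eps^{8/3}r^{1/3}Q(p)$ is the vertical layer. Then $(\psi_\eps)_{tt}$ contains $h\,|\nabla F_0|\,\mathcal U_{ttt}=O(\eps^{-1/3}r^{-17/3-\nu})$, which is a third transversal derivative of the layer. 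Through the $tt$-coefficient $W^{-1}\sim r^{-2}$ of the principal part, it contributes $O(\eps^{-1/3}r^{-23/3-\nu})$ to $D\cM_\eps[F_{\rm app}]\psi_\eps$. This carries a \emph{negative} power of $\eps$, so your conclusion that all layer contributions carry a positive power of $\eps$ is false. Your first-derivative bounds are fine: $h\,\partial_tW_{\rm app}=O(\eps^{2/3}r^{-11/3-\nu})$.

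The term is still absorbable, but only because of where the profile is used. The genuine $Q$-layer lives only in $r\gtrsim R_\eps=\eps^{-\kappa}$. Below that, the layer is the core extension, with scaled bounds of the type \eqref{normalextension}, where each derivative costs only $r^{-1}$. In the region $r\gtrsim R_\eps$ the ratio to the margin is $\eps^{-1/3}r^{-11/3}\le C\eps^{(11\kappa-1)/3}\to0$, since $\kappa>1/5$ by \eqref{kappachoice}. This lower bound on $r$ has to enter your argument explicitly; if the profile were used down to a fixed radius, the estimate would fail as $\eps\to0$. Finally, the phrase ``decay at least like $r^{-4-\nu-\kappa}$'' conflates the $\eps$-exponent $\kappa$ with a radial power. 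Replace it by the explicit bounds above.
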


\begin{proof}
On $B_{4R}$ this follows from the fixed strict Jacobi margin and the $O_R(\eps)$ perturbation estimate above.  On $r>2R$ it follows from the exterior calculation.  The annulus $2R<r<4R$ is compact, and the two estimates overlap there; decreasing $\eps_0(R)$ once more gives a common constant $c_*>0$.
\end{proof}

The reason for imposing the explicit homogeneous exterior form outside $2R$ is the gain in the translator drift; no second compact barrier is needed.

\begin{proposition}[Global barrier]\label{globalbarrier}
There exist $\nu>0$ and, after choosing $R$ sufficiently large, an $\eps_0(R)>0$ and a smooth positive $O(4)\times O(4)$-invariant vertical function $\psi_\eps$ such that for every $0<\eps<\eps_0(R)$,
\begin{equation}\label{globalbarrier-sign}
D\cM_\eps[F_{\mathrm{app}}]\psi_\eps
\le -c(1+r)^{-4-\nu}
\end{equation}
outside a fixed compact set.  Moreover
\[
\psi_\eps=\dfrac{W_{\rm app}}{W_\Gamma}r^{-\nu}q_\nu(\theta)
\qquad\hbox{for }r>2R,
\]
up to the lower-order transfer from $\Gamma_0$ to $\Gamma$, and the barrier can be modified on the fixed compact region so that \eqref{globalbarrier-sign} holds globally.
\end{proposition}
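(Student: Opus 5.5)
The plan is to assemble Proposition~\ref{globalbarrier} from three ingredients already in place: the global Jacobi barrier $h$ of \eqref{Jbarriermargin}--\eqref{hexplicit}, the exterior drift and mixed-term estimates \eqref{psi-t-est}--\eqref{bar-extra3}, and Lemma~\ref{compactcompletionlemma}. Fix $0<\nu<\sigma_0/10$ and build $q_\nu$ from \eqref{qnu-barrier}; the restriction $\nu<1$ keeps $q_\nu$ positive and bounded on the closed interval by \eqref{qnu-bounds}. Then fix $R$ so large that $h_{\rm ext}$ satisfies $\cJ_\Gamma h_{\rm ext}\le -cr^{-4-\nu}$ for $r>R$; this is the model identity \eqref{qnu-source} transported to $\Gamma$ by Proposition~\ref{transferprop}, whose $O(r^{-4-\nu-\sigma_0})$ error is absorbed. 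Complete $h$ across $r\le 2R$ by the Dirichlet-problem construction, with the maximum principle supplied by Corollary~\ref{jacobi-max}. Finally set $\psi_\eps=W_{\rm app}h$, which gives the stated exterior formula \eqref{psiapp-symbol} automatically.

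In the exterior $r>2R$, I would write $D\cM_\eps[F_{\rm app}]$ via the graph form \eqref{graphlin} and split it into four pieces:
\begin{enumerate}[label=(\roman*)]
\item the vertical linearization at $\olF$ applied to $W_\Gamma h$, which equals $\cJ_\Gamma h$ up to the positive factor conventions in \eqref{normalvertical} and hence is $\le -cr^{-4-\nu}$;
\item the change of linearization from $\olF$ to $F_{\rm app}$, controlled by the relative gradient perturbation $O(\eps r^{-1})$ together with the layer and higher-order corrections, which gives the term $\mathcal R_{\rm lin}=o(r^{-4-\nu})$;
\item the mixed completed-square terms \eqref{bar-extra1}--\eqref{bar-extra2}, of size $O(\eps r^{-5-\nu})$;
\item the translator drift \eqref{bar-extra3}, also of size $O(\eps r^{-5-\nu})$ by the three-power gain \eqref{homog-t-der}.
\end{enumerate}
Pieces (iii) and (iv) lose a full power against the margin, so they are absorbed once $R$ is large. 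Since the ratio $W_{\rm app}/W_\Gamma=1+O(\eps r^{-1})$ has the same symbol estimates, replacing $\psi_0$ by $\psi_\eps$ only adds lower-order terms.

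On the compact region $r\le 4R$, all coefficients are bounded in terms of $R$, and $D\cM_\eps[F_{\rm app}]\psi_\eps=\cJ_\Gamma h+O_R(\eps)(h+|D_\Gamma h|)$. Because $h$ has a strict margin $-c_R$ there, taking $\eps<\eps_0(R)$ preserves the sign. The overlap annulus $2R<r<4R$ is compact, so both estimates hold there and a common constant $c$ is obtained, exactly as in Lemma~\ref{compactcompletionlemma}. The order of choices matters: first $\nu$, then $R$, and only then $\eps_0(R)$.

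The main obstacle is piece (ii) near the Simons cone. There $F_{\rm app}$ contains the layer $\eps^2 W h_{\rm in}$ and the fractional outer terms, whose raw $t$-derivatives are singular, while the barrier margin $r^{-4-\nu}$ is uniform up to $t=0$. I would first try to use the recentered variable $T$ and the bookkeeping of Lemma~\ref{NonlinearBookkeeping}, treating the mixed terms as part of the principal operator. I would then bound the layer's contribution to the coefficients by $\eps^{2}\cdot\eps^{2/3}r^{-5/3}|Q'|(\eps r^2)^{-1}$ times the metric factors, and check that this stays $o(r^{-4-\nu})$ uniformly in $p$, since $Q$ is smooth with bounded derivatives on compact $p$-sets and algebraic at infinity. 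If this uniform bound fails for the smallest $r$, the fallback is to enlarge $R$ relative to the fixed $\nu$ and use the extra powers of $\eps$ together with $\eps<\eps_0(R)$.
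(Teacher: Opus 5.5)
Your proposal is correct and follows the paper's argument essentially step for step. The ingredients are the same: the global Jacobi barrier $h$ with exterior form $r^{-\nu}q_\nu(\theta)/W_\Gamma$, completed by a Dirichlet problem; the choice $\psi_\eps=W_{\rm app}h$; the exterior splitting into the Jacobi margin, the transfer remainder $\mathcal R_{\rm lin}=o(r^{-4-\nu})$, and the terms \eqref{bar-extra1}--\eqref{bar-extra3} that gain a full power of $r$; the $O_R(\eps)$ perturbation on the compact region; and parameters fixed in the order $\nu$, $R$, $\eps_0(R)$. The only difference is that you check the near-cone part of $\mathcal R_{\rm lin}$ explicitly through the bounded derivatives of $Q$, whereas the paper simply asserts it from Proposition~\ref{transferprop} and the differentiated matching estimates.
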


This proposition yields the required global barrier directly once the exterior product $W_0h$ is differentiated as a whole.

\subsection{The global layer, exactification, and residual budget}\label{sec:globalapprox}

The purpose of the exactification step is easy to obscure under the notation, so we first explain what is being corrected.  The inner profile and the outer Jacobi modes have been matched to sufficiently high order, but the object obtained by cutting them together is still only an approximate solution of the linearized translator equation.  There are three kinds of defects: a compactly supported core error, an error in the mesoscopic region where the parabolic model is only approximate, and commutators produced by the cutoffs.  None of these should be absorbed into the final nonlinear barrier.  Doing so would force the barrier to compensate errors of different geometric origins and would destroy the clean separation between approximation and comparison.

We therefore correct the layer once at the linear level.  The correction is carried out in the normal variable, not in the vertical graph variable.  This distinction is essential.  In graph coordinates the ellipticity ratio of the raw linearized operator reflects the large slope $|\nabla F_0|\sim r^2$ and is not uniform on the Euclidean scale.  In normal variables, after recentering and rescaling by the natural local length, the operator is uniformly elliptic.  Standard Schauder and maximum-principle arguments can then be applied with constants independent of the large radius.  Only after the normal correction has been estimated do we return to graph variables by multiplication with the normal factor $W$.

A second point is the order of choosing parameters.  The constants controlling the cone cutoff, the parabolic-to-elliptic transition, and the growing exactification radius cannot be selected independently at the end of the proof.  We first fix the small weight exponent $\nu$, then choose the cone parameter $K$, next the mesoscopic parameter $L$, then the exponent $\kappa$ defining $R_\varepsilon=\varepsilon^{-\kappa}$, and only afterwards take $\varepsilon$ small.  Written in this order, every commutator estimate has a fixed margin and no later choice changes an earlier constant.  The residual budget at the end of this subsection is simply the quantitative record of these margins.

The final approximation is most naturally organized around a single $\eps$-dependent transition layer.  Splitting this layer into $U_2$, $U_{5/2}$ and the matching part of $U_3$ is useful only in the outer expansion; doing so globally would double-count the fractional modes contained in the inner profile.

\subsubsection{Exact parity of the integer coefficients}

Let
\[
 S(F)=(1+|\nabla F|^2)^{-1/2},\qquad
 \mathcal M_\eps(F)=H(F)-\eps S(F),
\]
and let $\mathscr S(u,v)=(v,u)$.  Since
\[
 H(-F)=-H(F),\qquad S(-F)=S(F),\qquad \bar F\circ\mathscr S=-\bar F,
\]
the integer-order coefficients can be chosen with alternating parity:
\[U_1\circ\mathscr S=U_1,\qquad
 U_2\circ\mathscr S=-U_2,\qquad
 U_{3,\rm reg}\circ\mathscr S=U_{3,\rm reg}.\]
In particular
\[
 LU_1=S(\bar F),
\]
and the whole second-order forcing
\[
 F_2:=B_2(U_1,U_1)-S_1(U_1)
\]
is odd.  By Corollary~\ref{discreteBDGsymbols},
\[
 F_2=r^{-3}P_3^{\rm odd}(\theta)+\mathcal R_2.
\]

\subsubsection{A single global transition layer}

Let
\[
 T=t+\eps U_1
\]
be the recentered vertical coordinate and
\[
 p=\frac{T}{\eps r^2}.
\]
Let $Q$ be the smooth inner profile constructed in the rigorous Kummer--Stokes lemma.  Define the normal layer in the inner region by
\begin{equation}\label{layernormal}
 h_{\rm layer,\eps}(r,T)
 :=\eps^{8/3}r^{-5/3}Q(p),
\end{equation}
where the prefactor includes the actual order $\eps^2$ of the graph correction.  Passing from normal to vertical displacement defines a graph layer
\[
 \mathcal U_{\rm layer,\eps}.
\]
For $|p|\to\infty$ this single object has the outer expansion
\begin{equation}\label{layerouter}
 \mathcal U_{\rm layer,\eps}
 =\eps^2U_2
 +\eps^{5/2}U_{5/2}
 +\eps^3U_{3,\rm match}
 +O_{\rm ad}(\eps^{7/2}r^{-1/2-\delta}),
\end{equation}
where
\[
 U_2\sim r\,q_2(\theta),\qquad
 U_{5/2}\sim B_\pm r^{1/2}g(\theta)^{1/6},
\]
and the third term contains the $g^{-1/3}$ matching contribution.  Formula \eqref{layerouter} is an \emph{asymptotic expansion of the layer}; the three displayed terms are never added again in a region where the full $Q$-profile is being used.

The $p^{1/6}$ and $p^{-1/3}$ terms are already contained in the full inner profile.  They appear as separate fractional powers of $\eps$ only after the same smooth layer is expanded for large $|p|$.

\subsubsection{Where the asymptotic lateral matching is allowed}

At this point it is essential to keep separate the \emph{full layer} and the coefficient obtained after factoring out $\eps^2$.  We therefore introduce both natural scales.  In normal variables,
\[\mathcal H_{\rm lay}(r):=\eps^{8/3}r^{-5/3},\]
while the corresponding vertical graph displacement has size
\[\mathcal A_{\rm lay}(r):=W_1\mathcal H_{\rm lay}(r)
 \asymp \eps^{8/3}r^{1/3},
 \qquad
 W_1:=\sqrt{1+|\nabla F^{(1)}_\eps|^2}\asymp r^2.\]
If one factors out the explicit $\eps^2$ from the graph correction, the remaining coefficient has size $\eps^{2/3}r^{1/3}$.  In the exactification argument below we keep the full graph-layer size instead.

Fix $0<\nu\ll1$ and choose
\begin{equation}\label{kappachoice}
 R_\eps=\eps^{-\kappa},\qquad
 \frac{1+3\nu}{5-3\nu}<\kappa<1.
\end{equation}
We fix the parameters in the following order throughout this section:
\[0<\nu<\min\{\sigma_0/10,1/4\},\qquad K\gg1,\qquad L\gg_K1,\]
\[\frac{1+3\nu}{5-3\nu}<\kappa<1,\qquad 0<\eps<\eps_0(K,L,\kappa,\nu).\]
Here $K$ controls the large-$|p|$ matching, whereas $L$ places the sole radial transition at $r\asymp L/\eps$.  Since $\kappa<1$,
\[
 R_\eps=\eps^{-\kappa}=o(L/\eps),
\]
so the core source and the radial transition are disjoint for small $\eps$.  No parameter chosen later is allowed to alter $K,L,\nu$ or $\kappa$.

The outer representation \eqref{layerouter} is used only after the growing exactification has been introduced.  In a lateral matching band $K<|p|<2K$, with $p=T/(\eps r^2)$, the first omitted term in the \emph{full normal layer} is
\[
 O\left(K^{-5/6}\eps^{8/3}r^{-5/3}\right).
\]
A transversal derivative of a cutoff depending on $p$ costs $(\eps r^2)^{-1}$.  The second-order commutator therefore satisfies the sharp bound
\begin{equation}\label{latcomm-full}
 |\mathcal C_{\rm lat}|
 \le C K^{-5/6}\eps^{2/3}r^{-17/3}.
\end{equation}
We perform this lateral matching only for $r\ge R_\eps=\eps^{-\kappa}$.  Hence
\[
 \frac{|\mathcal C_{\rm lat}|}{\eps r^{-4-2\nu}}
 \le C K^{-5/6}\eps^{-1/3}r^{-5/3+2\nu}
 \le C K^{-5/6}\eps^{-1/3+\kappa(5/3-2\nu)}.
\]
The exponent of $\eps$ is positive provided $\kappa>1/(5-6\nu)$.  Our choice \eqref{kappachoice} is stronger, since for $0<\nu<1/4$,
\[
 \frac{1+3\nu}{5-3\nu}>\frac1{5-6\nu}.
\]
Thus the same growing radius that controls the core propagation also makes the lateral matching commutator strictly smaller than the target residual.

The quantity $\eps^{2/3}r^{1/3}$ is the size of the \emph{coefficient after removing $\eps^2$}; the actual graph layer has size $\eps^{8/3}r^{1/3}$.  We keep this distinction throughout the exactification.

\subsubsection{Growing-interior exactification in the normal variable}

The exactification is most naturally carried out in normal, not vertical, variables.  Let
\[
 F^{(1)}_\eps=\bar F+\eps U_1,
 \qquad
 W_1=\sqrt{1+|\nabla F^{(1)}_\eps|^2},
\]
and write a vertical perturbation as
\[
 \delta F=W_1 h.
\]
Define the exact normal linearized operator by
\[\mathscr J^{(1)}_\eps h
 :=D\mathcal M_\eps[F^{(1)}_\eps](W_1h).\]
For $\eps=0$ this is the Jacobi operator of the BDG graph, up to the harmless normalization induced by the vertical-to-normal conversion.  In the recentered variables $(r,T)$ its principal part is the elliptic--drift operator derived in Section~\ref{sec:innerlayer}.  In particular, on boxes of intrinsic size comparable with $r$ and with $|p|$ bounded, the rescaled operator is uniformly elliptic with coefficients bounded independently of $\eps$.

Let $\widehat h_{\rm lay,\eps}$ denote a provisional normal layer.  In the inner region it is the full profile \eqref{layernormal}; in the exterior it is obtained from the common large-$|p|$ expansion of the same profile.  Extend it smoothly through the fixed core, preserving the natural normal scaled bounds
\begin{equation}\label{normalextension}
 |D_\Gamma^j\widehat h_{\rm lay,\eps}|
 \le C\eps^{8/3}(1+r)^{-5/3-j},
 \qquad j=0,1,2.
\end{equation}
We correct this provisional normal layer on one growing domain, rather than solving on an annulus and extending a Dirichlet solution by zero.  This point is essential because the latter procedure would create an uncontrolled jump of the normal derivative.

Let
\begin{equation}\label{growingdomain}
 \Omega_{\eps,L}:=\{x\in\Gamma: r(x)<4L/\eps\}.
\end{equation}
All correctors below are solved on the same domain, with zero data only on the outer boundary $r=4L/\eps$.  Since that boundary lies beyond the radial gluing region, no corrector is ever extended by zero across a boundary at which it is still being used.

\paragraph{Solvability on the growing domain.}
Before constructing the full approximation, transfer the positive Jacobi barrier to the first approximation $F^{(1)}_\eps$.  In normal variables this gives a positive function $b^{(1)}_\eps$ satisfying
\[\mathscr J^{(1)}_\eps b^{(1)}_\eps
 \le-c(1+r)^{-4-\nu},
 \qquad
 b^{(1)}_\eps\asymp r^{-2-\nu}
 \quad(r\gg1),\]
with constants independent of the outer radius of $\Omega_{\eps,L}$.  Dividing by $b^{(1)}_\eps$ gives the usual maximum principle for the conjugated operator.  The Dirichlet problem
\[
 \mathscr J^{(1)}_\eps z=f\quad\hbox{in }\Omega_{\eps,L},
 \qquad z=0\quad\hbox{on }\partial\Omega_{\eps,L},
\]
has trivial Dirichlet kernel.  Since this is a uniformly elliptic second-order operator on the bounded smooth domain, the Fredholm alternative (equivalently, the method of continuity from a coercive Dirichlet operator, using the same maximum principle along the homotopy) gives existence for the invariant sources used below.  The estimates are independent of the growing outer radius because they follow from the same positive barrier and scaled interior Schauder estimates.

\paragraph{Core correction.}
Let $e_{\rm core}$ denote the error generated by the core extension in \eqref{normalextension}.  Choose the transition to the genuine layer before $2R_\eps$; since a normal Jacobi operator loses two radial powers,
\[|e_{\rm core}|
 \le C\eps^{8/3}(1+r)^{-11/3},
 \qquad
 \operatorname{supp}e_{\rm core}\subset\{r<2R_\eps\}.\]
Define $z_{\rm core}$ as the solution on $\Omega_{\eps,L}$ of
\[\mathscr J^{(1)}_\eps z_{\rm core}=-e_{\rm core},
 \qquad z_{\rm core}=0\quad\hbox{at }r=4L/\eps.\]
Comparison with
\[
 C\eps^{8/3}R_\eps^{1/3+\nu}b^{(1)}_\eps
\]
and scaled Schauder estimates give
\begin{align*}
|z_{\rm core}|+r|D_\Gamma z_{\rm core}|+r^2|D_\Gamma^2z_{\rm core}|
&\le C\eps^{8/3}R_\eps^{1/3+\nu}r^{-2-\nu}\\
&=C\eps^{8/3}R_\eps^{-5/3}
 \left(\frac{R_\eps}{r}\right)^{2+\nu},
 \qquad 3R_\eps\le r\le3L/\eps.
\end{align*}
Since $r\ge R_\eps$ in this region, we shall also use the weaker consequence
\[
 |z_{\rm core}|+r|D_\Gamma z_{\rm core}|+r^2|D_\Gamma^2z_{\rm core}|
 \le C\eps^{8/3}R_\eps^{-5/3}
 \left(\frac{R_\eps}{r}\right)^\nu.
\]
The stronger estimate, with the explicit factor $r^{-2}$, is used when converting to graph variables.  Passing back to the graph variable $Z_{\rm core}=W_1z_{\rm core}$ yields
\begin{equation}\label{Zcore}
 |Z_{\rm core}|+r|DZ_{\rm core}|+r^2|D^2Z_{\rm core}|
 \le C\eps^{8/3}R_\eps^{1/3}
 \left(\frac{R_\eps}{r}\right)^\nu.
\end{equation}

\paragraph{Exterior model correction.}
In the region where the heat model is replaced by the exact recentered normal operator, the discarded coefficient is relative $O((\eps r)^{-2})$.  Let $e_{\rm ext}$ be the corresponding source, cut off smoothly inside
\[
 \frac12L\eps^{-1}<r<\frac32L\eps^{-1},
 \qquad |p|<2K.
\]
Then
\begin{equation}\label{eext-normal}
 |e_{\rm ext}|_{0,\alpha}
 \le C L^{-2}r^{-2}\mathcal H_{\rm lay}(r)
 =C L^{-2}\eps^{8/3}r^{-11/3}.
\end{equation}
Define $z_{\rm ext}$ on the same growing domain by
\[\mathscr J^{(1)}_\eps z_{\rm ext}=-e_{\rm ext},
 \qquad z_{\rm ext}=0\quad\hbox{at }r=4L/\eps.\]
On the support of $e_{\rm ext}$ set
\[
 \rho=\frac{\eps r}{L},\qquad
 \pi=p=\frac{T}{\eps r^2}.
\]
After division by a positive principal coefficient, $\mathscr J^{(1)}_\eps$ is uniformly elliptic with bounded drift on a fixed $(\rho,\pi)$ domain; the constants depend on the already fixed $K,L$ but not on $\eps$.  Barrier comparison followed by scaled Schauder estimates gives
\begin{equation}\label{zext-normal}
 |z_{\rm ext}|+r|D_\Gamma z_{\rm ext}|+r^2|D_\Gamma^2z_{\rm ext}|
 \le C L^{-2}\mathcal H_{\rm lay}(r)
\end{equation}
in the radial gluing region.  Equivalently, for $Z_{\rm ext}=W_1z_{\rm ext}$,
\begin{equation}\label{Zext}
 |Z_{\rm ext}|+r|DZ_{\rm ext}|+r^2|D^2Z_{\rm ext}|
 \le C L^{-2}\mathcal A_{\rm lay}(r).
\end{equation}
The crucial point is that both $z_{\rm core}$ and $z_{\rm ext}$ remain genuine solutions through $r=3L/\eps$; the radial cutoff below is applied strictly inside their common domain.  Hence no derivative jump is hidden in the construction.

This is the point at which the normal variable matters.  In the graph variable the ellipticity ratio of the raw divergence-form matrix grows like $W_1^2$.  In normal recentered variables the operator is uniformly elliptic on the intrinsic scale $r$, and its inverse gains two powers, as \eqref{eext-normal}--\eqref{zext-normal} show.

\subsubsection{The only radial gluing}

The exactified growing-interior normal layer (including $z_{\rm core}+z_{\rm ext}$) and the exterior normal composite are glued only in
\[
 2L\eps^{-1}<r<3L\eps^{-1},
\]
which lies strictly inside $\Omega_{\eps,L}$.
A radial cutoff in the normal Jacobi equation costs $O(r^{-2})$.  Hence the core contribution to the mean-curvature commutator is
\[
 O\!\left(
 r^{-2}\eps^{8/3}R_\eps^{-5/3}
 (R_\eps/r)^\nu
 \right),
\]
and the exterior contribution is
\[
 O\!\left(L^{-2}r^{-2}\mathcal H_{\rm lay}(r)\right).
\]
At $r\simeq L/\eps$, the first is below the target $\eps r^{-4-2\nu}$ provided
\[
 \kappa>\frac{1+3\nu}{5-3\nu},
\]
which is exactly \eqref{kappachoice}, while the second is
\[O(\eps^{19/3}L^{-17/3}),\]
again much smaller than $\eps^{5+2\nu}L^{-4-2\nu}$ for $\nu>0$ sufficiently small.

Finally define the exact graph layer by multiplying the exactified normal layer by $W_1$:
\[\mathcal U_{\rm layer,\eps}^{\rm exact}:=W_1h_{\rm layer,\eps}^{\rm exact}.\]
The conversion is performed only after the normal exactification estimates have been established.

\subsubsection{Regular order $\eps^3$ correction and the residual}

Once the layer block is fixed, the remaining integer order-$\eps^3$ source is the regular source $F_{3,\rm reg}$ of Lemma~\ref{F3symbol}.  Its first resonant symbol is $r^{-4}P_{4,3}^{\rm even}$ by \eqref{F3sharp}.  Let $U_{3,\rm reg}$ denote the corresponding even correction, including the logarithmic piece when the cone value of the angular coefficient is nonzero and the fast inverse for the remainder.  The matching $g^{-1/3}$ term is \emph{not} included in $U_{3,\rm reg}$ because it is already contained in $\mathcal U_{\rm layer,\eps}$.

The global approximation is therefore
\begin{equation}\label{Fapp}
 F_{\rm app}
 =\bar F+\eps U_1
 +\mathcal U_{\rm layer,\eps}^{\rm exact}
 +\eps^3U_{3,\rm reg}.
\end{equation}
The first uncancelled regular outer terms are of order $\eps^{7/2}$ and have radial degree at most $-9/2$; the quadratic remainder in the growing region is controlled by the exact graphical Taylor estimate.  We obtain
\begin{theorem}[Global approximation theorem]\label{globalapprox}
For some $\nu>0$ and all sufficiently small $\eps$ there exists a smooth $O(4)\times O(4)$-invariant function $F_{\rm app}$ of the form \eqref{Fapp} such that
\begin{equation}\label{globalres}
 |\mathcal M_\eps(F_{\rm app})(x)|
 \le C\eps(1+r)^{-4-2\nu}.
\end{equation}
Moreover, on each fixed compact set,
\[
 \|F_{\rm app}-\bar F\|_{C^{2,\alpha}}\le C_K\eps.
\]
\end{theorem}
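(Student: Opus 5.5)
The plan is to verify \eqref{globalres} by expanding $\mathcal M_\eps(F_{\rm app})$ around the exact BDG graph $\bar F$. We use the exact graphical Taylor formula (Appendix~\ref{app:taylor}) in powers of the perturbation
\[
 \mathcal V:=\eps U_1+\mathcal U^{\rm exact}_{\rm layer,\eps}+\eps^3U_{3,\rm reg}.
\]
The residual is then bounded region by region: the fixed core $r\le R_\eps$, the growing interior $R_\eps\le r\le 3L/\eps$, and the far exterior $r\ge 2L/\eps$ where the outer composite is used. Since $H(\bar F)=0$, the expansion reads
\[
 \mathcal M_\eps(F_{\rm app})=L\mathcal V+B_2(\mathcal V,\mathcal V)+B_3(\mathcal V,\mathcal V,\mathcal V)-\eps S(\bar F)-\eps S_1(\mathcal V)-\eps S_2(\mathcal V,\mathcal V)+\mathcal N.
\]
Here $L=H'(\bar F)$ and $\mathcal N$ is the higher remainder. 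The terms are then grouped by powers of $\eps$.

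At order $\eps$ the identity $LU_1=S(\bar F)$ removes everything exactly. At order $\eps^2$ the odd forcing $F_2=r^{-3}P_3^{\rm odd}+\mathcal R_2$ of Lemma~\ref{F2symbol} is not cancelled by an outer $U_2$. Instead it is cancelled, in the recentered normal variables, by the layer. By Section~\ref{sec:innerlayer}, after completing the square \eqref{squareop} the principal part of the operator acting on $\mathcal U_{\rm layer,\eps}$ is $\mathscr P_\eps$. The profile $Q$ of Lemma~\ref{innerprofilefinal} solves the self-similar reduction exactly, and the normal exactification (core correction $z_{\rm core}$ and exterior correction $z_{\rm ext}$) removes the defect between $\mathscr P_\eps$ and $\mathscr J^{(1)}_\eps$. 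What remains is:
\begin{itemize}[label=--]
\item the lateral commutator \eqref{latcomm-full};
\item the radial gluing commutators at $r\simeq L/\eps$;
\item the nonlinear remainders listed in Lemma~\ref{NonlinearBookkeeping}.
\end{itemize}
The choice \eqref{kappachoice} of $\kappa$ makes each of these at most $o(1)\eps r^{-4-2\nu}$ for $r\ge R_\eps$. At order $\eps^3$, Lemma~\ref{matchingcoefficient} and Lemma~\ref{F3symbol} show that after subtracting the $p^{-1/3}$ part of the layer the source is $F_{3,\rm reg}=r^{-4}P_{4,3}^{\rm even}+\mathcal R_3$. This source is removed by $U_{3,\rm reg}$, built from a $\log r$ multiple, the separated formula \eqref{qformula2} with the $g^{1/3}$ cone term cut off at the inner scale, and Proposition~\ref{fast-inverse} applied to $\mathcal R_3\in C^{\alpha}_{4+\nu}$.

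The remaining terms are of order $\eps^{7/2}$ and higher, and each is checked by degree counting. The layer expansion error $O_{\rm ad}(\eps^{7/2}r^{-1/2-\delta})$ in \eqref{layerouter} produces a source of degree $-9/2-\delta$. Cross terms such as $\eps^{5/2}\cdot\eps$ interactions involving $U_{5/2}$ are also of order $\eps^{7/2}$; by Lemma~\ref{NonlinearBookkeeping} they carry an extra factor $\eta^\kappa$ or $r^{-1}$. Terms of order $\eps^4$ coming from $U_1$ cubed and from $U_{3,\rm reg}$ interactions have degree at most $-4+\eta$ with an $\eps^4$ prefactor. In the far exterior we compare each with $\eps r^{-4-2\nu}$ using $r\ge L/\eps$: any term $\eps^{a}r^{-b}$ with $a>1$ and $b\ge 4-(a-1)+2\nu$ is admissible there, since $r^{-1}\le\eps/L$ converts surplus powers of $\eps$ into decay. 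Inside $r\le 3L/\eps$ we use the explicit $\eps$ prefactors instead. On the core $r\le R_\eps$ the perturbation satisfies $\|\mathcal V\|_{C^{2,\alpha}}\lesssim \eps\, r^{2}$. There the residual is at most $C\eps^2R_\eps^{m}$ for some fixed $m$, which is smaller than $\eps(1+r)^{-4-2\nu}$ once $\kappa$ is small relative to that loss. If this fails, we refine by noting that $U_1$ is exact and only quadratic terms remain, which are of size $\eps^2(1+r)^{c}$. The compact $C^{2,\alpha}$ statement follows because, on any fixed compact set, $U_1$ is bounded and the layer and $U_{3,\rm reg}$ are $O(\eps^{2})$ there.

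The main obstacle is the growing interior region near the cone, specifically the transition between the inner regime ($\eps r\lesssim1$, where $h_{TT}$ is not negligible) and the parabolic regime. Away from the cone the errors are controlled by homogeneity, but near it they are not, so I would track all errors in the normal variable with the weight $\eps r^{-4-2\nu}$ there. The approach is:
\begin{enumerate}
\item Rescale intrinsic balls of radius $r$ to unit size, so that \eqref{normalextension}, \eqref{Zcore} and \eqref{Zext} become uniform Schauder bounds.
\item Convert to vertical variables only at the end, using $W_1\asymp r^2$.
\end{enumerate}
The conversion is checked against the drift structure \eqref{graphlin}, since the translating term is first order and could otherwise lose powers of $r$.
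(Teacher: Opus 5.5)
Your overall structure matches the paper's: $U_1$, the single layer and $U_{3,\rm reg}$ cancel order by order, and a residual budget (lateral commutator, radial gluing, Taylor terms) is closed using \eqref{kappachoice}. The step that fails is your closing bookkeeping for the terms of order $\eps^{7/2}$ and higher in the far exterior.

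On $\{r\ge 2L/\eps\}$ the inequality $r^{-1}\le\eps/L$ converts surplus decay into powers of $\eps$. It does not convert surplus powers of $\eps$ into decay. For fixed $\eps$ and $r\to\infty$, $\eps^ar^{-b}\le C\eps r^{-4-2\nu}$ holds if and only if $b\ge 4+2\nu$. Your criterion $b\ge4-(a-1)+2\nu$ is exactly the condition valid on the bounded range $r\lesssim L/\eps$, so you have interchanged the two regions. In particular, your claim that the order-$\eps^4$ terms have degree at most $-4+\eta$ would leave residuals like $\eps^4r^{-4+\eta}$. These violate \eqref{globalres} once $r\gtrsim\eps^{-3/(\eta+2\nu)}$.

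The missing input is the sharp degree count the paper uses. $B_k$ lowers the total degree by $3k+1$ and $S_k$ by $3k+2$. Hence every uncancelled order-$\eps^{7/2}$ term (for example $B_2(U_1,U_{5/2})$ and $S_1(U_{5/2})$) has degree $\le-9/2$. Every order-$\eps^4$ term ($B_4(U_1^{\otimes4})$, $B_2(U_2,U_2)$, $B_2(U_1,U_{3,\rm reg})$, $S_1(U_{3,\rm reg})$, \dots) has degree $-5$ up to logarithms. Then $\eps^{7/2}r^{-9/2}=\eps r^{-4-2\nu}\cdot\eps^{5/2}r^{-1/2+2\nu}$ with $\nu<1/4$ closes the estimate for all $r\ge1$, with no relation between $r$ and $\eps$ needed.

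Your core estimate is also inconsistent with your own parameter choice. You propose to absorb $C\eps^2R_\eps^m$ by taking $\kappa$ small. But \eqref{kappachoice} forces $\kappa>(1+3\nu)/(5-3\nu)\approx1/5$, and you invoke that lower bound yourself for the lateral and propagation commutators.

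Comparing with $\eps R_\eps^{-4-2\nu}$ requires $\kappa(m+4+2\nu)<1$. Combined with the lower bound on $\kappa$, this holds only if $m<1-O(\nu)$. Ignoring the factors $W^{-1}\sim r^{-2}$ in the Taylor coefficients gives at best $m=1$, which fails.

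Use Lemma~\ref{graphremainder} with $W\asymp r^2$ instead. The bounds $|D(\eps U_1)|\lesssim\eps r$, $|D^2(\eps U_1)|\lesssim\eps$ and $|D^2\bar F|\lesssim r$ give a quadratic remainder $O(\eps^2r^{-3})$, that is $m=-3$. This needs only $\kappa(1+2\nu)<1$, which is compatible with \eqref{kappachoice}. In the paper, moreover, the core-extension error is not estimated as a residual at all. It is solved away by $z_{\rm core}$, and only its propagation is charged at $r\simeq L/\eps$.
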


\begin{proof}
The order-$\eps$ equation is solved by $U_1$.  The complete singular order-$\eps^2$ correction, together with the fractional $\eps^{5/2}$ and matching $\eps^3$ pieces, is represented by the single exactified layer block.  Finally $U_{3,\rm reg}$ cancels the remaining even resonant order-$\eps^3$ source.  We spell out the residual budget in order to keep the different spatial scales and powers of $\varepsilon$ visible in one place.

\smallskip
\noindent\emph{Lateral matching.}  By \eqref{latcomm-full}, for $r\ge R_\eps$,
\[
 \frac{|\mathcal C_{\rm lat}|}{\eps r^{-4-2\nu}}
 \le CK^{-5/6}\eps^{-1/3+\kappa(5/3-2\nu)}=o(1),
\]
because \eqref{kappachoice} implies $\kappa>1/(5-6\nu)$.

\smallskip
\noindent\emph{Core propagation and radial cutoff.}  At $r\simeq L/\eps$, the commutator generated by $z_{\rm core}$ is bounded by
\[
 C r^{-2}\eps^{8/3}R_\eps^{-5/3}(R_\eps/r)^\nu.
\]
Dividing by the target $\eps r^{-4-2\nu}$ gives a power of $\eps$ which tends to zero precisely when
\[
 \kappa>\frac{1+3\nu}{5-3\nu}.
\]
This is the lower bound in \eqref{kappachoice}.

\smallskip
\noindent\emph{Exterior model error.}  The exact corrector $z_{\rm ext}$ removes the source \eqref{eext-normal}; its radial cutoff costs
\[
 O(\eps^{19/3}L^{-17/3}),
\]
whereas the target at $r=L/\eps$ is $\eps^{5+2\nu}L^{-4-2\nu}$.  Since $\nu<1/4$, the former is smaller by a positive power of $\eps$.

\smallskip
\noindent\emph{Unmatched regular outer terms.}  After the coefficient-level subtraction in Lemma~\ref{matchingcoefficient} and the $U_{3,\rm reg}$ correction, the first regular terms not explicitly cancelled have prefactor at least $\eps^{7/2}$ and radial degree at most $-9/2$.  Hence
\[
 \eps^{7/2}r^{-9/2}
 \le \eps r^{-4-2\nu}\,
      \eps^{5/2}r^{-1/2+2\nu}
 =o\bigl(\eps r^{-4-2\nu}\bigr)
\]
for $\nu<1/4$ and $r\ge1$.

\smallskip
\noindent\emph{Nonlinear Taylor terms.}  The perturbations satisfy the relative slope hypothesis of Lemma~\ref{graphremainder}: for the largest one, $\delta=\eps U_1$, one has $|D\delta|/W=O(\eps/r)$ in the exterior, while on compact sets the gradient is $O(\eps)$.  The layer and higher corrections are smaller in the corresponding scaled norms.  Lemma~\ref{graphremainder}, together with \eqref{Zcore}--\eqref{Zext}, therefore puts all quadratic and cubic leftovers strictly below the same target weight.

Together these estimates account for all error terms because $z_{\rm core}$ and $z_{\rm ext}$ solve on the common growing domain \eqref{growingdomain}; in particular, no boundary derivative jump is introduced by the radial gluing.  This proves \eqref{globalres}.  The compact estimate follows from the construction and standard interior Schauder estimates. \qed
\end{proof}

At this point the outer hierarchy is no longer a formal degree table.  The order-$\eps^2$ symbol is defined directly from the homogeneous Taylor operator, and the order-$\eps^3$ regular symbol is defined only after subtracting the contribution already contained in the single smooth layer.  This formulation avoids double counting between the regular outer hierarchy and the single smooth layer.

\begin{lemma}[Quadratic graph-variable Taylor remainder]\label{graphremainder}
Let $F$ be any graph for which $W=(1+|\nabla F|^2)^{1/2}$ and the first two derivatives of $F$ are controlled in the weighted regions used above.  Assume that along the perturbation one has the relative slope bound
\[
 |D\delta|\le c_0 W
\]
with a fixed sufficiently small $c_0$ (on compact sets it is enough that $|D\delta|$ be small).  Then
\[
|\mathcal M_\eps(F+\delta)-\mathcal M_\eps(F)-D\mathcal M_\eps(F)\delta|
\le C\Big(
W^{-2}|D\delta||D^2\delta|+W^{-3}|D^2F|\,|D\delta|^2+\eps W^{-3}|D\delta|^2
\Big).
\]
The same estimate holds in $C^{0,\alpha}$ on scaled intrinsic balls.
\end{lemma}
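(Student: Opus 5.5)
The estimate is a pointwise second-order Taylor bound for the explicit operator
\[
\mathcal M_\eps(F)=\operatorname{div}\bigl(a(\nabla F)\bigr)-\eps S(\nabla F),
\qquad
a(\xi)=\frac{\xi}{\sqrt{1+|\xi|^2}},\quad S(\xi)=(1+|\xi|^2)^{-1/2}.
\]
We expand $a$ and $S$ along the segment $\nabla F+\tau D\delta$, $\tau\in[0,1]$. In nondivergence form,
\[
\operatorname{div} a(\nabla F)=Da(\nabla F):D^2F .
\]
Hence
\[
\mathcal M_\eps(F+\delta)=Da(\nabla F+D\delta):(D^2F+D^2\delta)-\eps S(\nabla F+D\delta).
\]
Subtracting $\mathcal M_\eps(F)$ and the linearization \eqref{graphlin}, written as $Da(\nabla F):D^2\delta+D^2a(\nabla F)[D\delta]:D^2F-\eps DS(\nabla F)D\delta$, the remainder splits into three groups:
\begin{align*}
\mathrm{I}&=\bigl(Da(\nabla F+D\delta)-Da(\nabla F)\bigr):D^2\delta,\\
\mathrm{II}&=\bigl(Da(\nabla F+D\delta)-Da(\nabla F)-D^2a(\nabla F)[D\delta]\bigr):D^2F,\\
\mathrm{III}&=-\eps\bigl(S(\nabla F+D\delta)-S(\nabla F)-DS(\nabla F)D\delta\bigr).
\end{align*}
By the mean value theorem and Taylor's formula with integral remainder, it then suffices to bound $|D^2a|$, $|D^3a|$ and $|D^2S|$ along the segment.

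\emph{Symbol bounds.} On the segment, the relative slope hypothesis $|D\delta|\le c_0W$ with $c_0\le1/2$ gives
\[
\sqrt{1+|\nabla F+\tau D\delta|^2}\asymp W .
\]
So the needed fact is that the derivatives of the graph symbols have the right homogeneity in $W$. A direct computation gives
\[
|D^ka(\xi)|\le C_k(1+|\xi|^2)^{-k/2},\qquad |D^kS(\xi)|\le C_k(1+|\xi|^2)^{-(k+1)/2}.
\]
The first follows because $a$ is the gradient of $\sqrt{1+|\xi|^2}$, which is homogeneous of degree one at infinity and smooth, so each derivative gains one inverse power of $\langle\xi\rangle$. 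The second holds because $S$ is homogeneous of degree $-1$ at infinity. I would verify these by writing $a=\nabla_\xi\langle\xi\rangle$ and using the standard symbol-class estimates for $\langle\xi\rangle^m$.

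\emph{Term-by-term bounds.} Term $\mathrm{I}$ is bounded by $\sup|D^2a|\,|D\delta|\,|D^2\delta|\le CW^{-2}|D\delta||D^2\delta|$. Term $\mathrm{II}$ is bounded by $\sup|D^3a|\,|D\delta|^2|D^2F|\le CW^{-3}|D^2F||D\delta|^2$. Term $\mathrm{III}$ is bounded by $\eps\sup|D^2S|\,|D\delta|^2\le C\eps W^{-3}|D\delta|^2$. These are exactly the three terms of the claim.

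On compact sets $W\asymp1$, and smallness of $|D\delta|$ keeps the segment in a fixed compact set of $\xi$, so the same bounds hold with constants depending only on that set.

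\emph{Hölder version and main obstacle.} For the $C^{0,\alpha}$ statement on scaled intrinsic balls, the remainders are written as explicit integrals in $\tau$ of smooth symbols evaluated at $\nabla F+\tau D\delta$, multiplied by products of $D\delta$, $D^2\delta$ and $D^2F$. The Hölder seminorm of a product is controlled by the product rule. The seminorm of a symbol composed with the gradient requires bounding $D^{k+1}a$ and $D^{3}S$ times the weighted seminorm of $\nabla F+\tau D\delta$. Here one uses that the extra symbol derivative gains one more factor $W^{-1}$, compensating the $|D^2F|\lesssim W/r$ cost of the chain rule at the scale of balls of radius comparable to $r$.

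This last bookkeeping is the only delicate point. The Euclidean chain rule costs $|D^2F|$, which is large ($\sim r$), and one must check that on balls of radius $\vartheta r$ it enters only through the combination $W^{-1}|D^2F|\,r\lesssim1$. I would handle this by performing the estimate in the rescaled tangent-plane charts of Section~\ref{sec:exactbdg}, where all coefficients are uniformly bounded.
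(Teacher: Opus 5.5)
Your argument is correct and is essentially the paper's proof: a Taylor expansion along the segment $\nabla F+\tau D\delta$, with $W_\tau\asymp W$ coming from the relative slope bound, combined with the symbol decay $|D^k\mathfrak a|\lesssim W^{-k}$ and $|D^k\mathfrak s|\lesssim W^{-k-1}$. The only difference is bookkeeping. The paper expands the flux $\mathfrak a$ to second order and then takes the divergence, which also produces a term $W^{-3}|D\delta|^2|D^2\delta|$ that it silently absorbs into $W^{-2}|D\delta||D^2\delta|$ via $c_0$; your nondivergence split into $\mathrm{I}$, $\mathrm{II}$, $\mathrm{III}$ is exact and avoids that step. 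For the H\"older version, your chain-rule observation (one extra symbol derivative gains $W^{-1}$, which compensates $|D^2F|$, and likewise $|D^2\delta|$, at scale $r$) is what is needed. It can be applied directly in the Euclidean graph variables on balls of radius $\vartheta r$, since intrinsic distance dominates the Euclidean distance of projections. So you do not need the tangent-plane charts, which would be awkward anyway for a vertical perturbation.
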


\begin{proof}
The estimate follows from Taylor expansion of the flux map $p\mapsto p(1+|p|^2)^{-1/2}$ and the translating source along the segment $\nabla F+\tau\nabla\delta$.  The relative slope hypothesis is precisely what keeps the denominators comparable with $W$.  A complete derivation, including the scaled H\"older version, is given in Appendix~\ref{app:taylor}.
\end{proof}

\section{Existence of entire translators and non-convexity}\label{sec:existence}

We now combine the global approximation with the global linear barrier.  At this stage no further asymptotic construction is required.

\subsection{The correct amplitude of the final barriers}

Let $\psi_\eps$ be the positive global function of Proposition~\ref{globalbarrier}, completed on the compact region by Lemma~\ref{compactcompletionlemma}.  Fix a constant $A>1$, independent of $\eps$, and define
\[F_\eps^+=F_{\rm app}+A\eps\psi_\eps,
\qquad
F_\eps^-=F_{\rm app}-A\eps\psi_\eps.\]

The natural amplitude for the final barriers is $A\eps$: the approximate residual is order $\eps$, the linear barrier margin is order one in the normalized weight, and the nonlinear remainder then starts at order $\eps^2$.

Taylor's formula and the graph-variable quadratic estimate give
\begin{align*}
\cM_\eps[F_{\rm app}\pm A\eps\psi_\eps]
={}&\cM_\eps[F_{\rm app}]
\pm A\eps D\cM_\eps[F_{\rm app}]\psi_\eps
+\mathcal N_\eps(\pm A\eps\psi_\eps).
\end{align*}
From Theorem~\ref{globalapprox} and \eqref{compactcompleteineq},
\[
|\cM_\eps[F_{\rm app}]|
\le C\eps(1+r)^{-4-2\nu},
\]
whereas
\[
A\eps D\cM_\eps[F_{\rm app}]\psi_\eps
\le-c_*A\eps(1+r)^{-4-\nu}.
\]
Choose first $A$ so large that the linear negative margin dominates the constant in the approximate residual on the fixed region $r\le R_1$, and then choose $R_1$ large so that the extra factor $r^{-\nu}$ makes the domination automatic for $r\ge R_1$.

For the nonlinear term, Lemma~\ref{graphremainder} applied to $\delta=A\eps\psi_\eps$ gives, in the exterior where $\psi_\eps=O(r^{-\nu})$,
\[|\mathcal N_\eps(A\eps\psi_\eps)|
\le C_A\eps^2(1+r)^{-7-2\nu}.\]
On the fixed compact region the same Taylor formula gives $|\mathcal N_\eps|\le C_A\eps^2$.  Hence, after decreasing $\eps_0(A)$,
\[
\cM_\eps[F_\eps^+]\le0,
\qquad
\cM_\eps[F_\eps^-]\ge0
\qquad\hbox{in }\R^8.
\]
Moreover $F_\eps^-<F_\eps^+$ because $\psi_\eps>0$.

\begin{proposition}[Global ordered barriers]\label{globalorderedbarriers}
For all sufficiently small $\eps>0$ there exist smooth entire functions $F_\eps^-<F_\eps^+$ satisfying
\[
\cM_\eps[F_\eps^-]\ge0,
\qquad
\cM_\eps[F_\eps^+]\le0,
\]
and
\[
F_\eps^\pm=F_{\rm app}\pm A\eps\psi_\eps.
\]
In particular
\[
|F_\eps^\pm-\olF|\le C\eps(1+r^2).
\]
\end{proposition}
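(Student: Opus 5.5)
The proof will assemble the ingredients just computed: the global approximation of Theorem~\ref{globalapprox}, the global linear barrier of Proposition~\ref{globalbarrier} and Lemma~\ref{compactcompletionlemma}, and the Taylor estimate of Lemma~\ref{graphremainder}. First, fix $\nu$, $R$ and $\eps_0(R)$ so that \eqref{compactcompleteineq} holds with constant $c_*$, and \eqref{globalres} holds with constant $C_0$. Then choose the amplitude $A$ with $c_*A\ge 4C_0$. Since $(1+r)^{-4-2\nu}\le(1+r)^{-4-\nu}$ everywhere, this single choice already gives
\[
\cM_\eps[F_{\rm app}]\pm A\eps D\cM_\eps[F_{\rm app}]\psi_\eps
\]
the correct sign with margin $\tfrac34 c_*A\eps(1+r)^{-4-\nu}$. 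This is cleaner than the two-step choice of $A$ and $R_1$ sketched above, because no region needs special treatment.

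Next, control the quadratic remainder $\mathcal N_\eps(\pm A\eps\psi_\eps)$ with Lemma~\ref{graphremainder}, taking $F=F_{\rm app}$ and $\delta=\pm A\eps\psi_\eps$.

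In the exterior $r>2R$ we have $\psi_\eps=(W_{\rm app}/W_\Gamma)r^{-\nu}q_\nu$, so $|D\psi_\eps|\lesssim r^{-1-\nu}$ and $|D^2\psi_\eps|\lesssim r^{-2-\nu}$ (Euclidean derivatives of a degree $-\nu$ symbol). We also have $W\asymp r^2$ and $|D^2F_{\rm app}|\lesssim r$. The relative slope hypothesis $|D\delta|\le c_0W$ is therefore trivial. The three terms of the lemma are bounded by
\[
A^2\eps^2r^{-4}r^{-3-2\nu},\qquad A^2\eps^2 r^{-6}\,r\,r^{-2-2\nu},\qquad A^2\eps^3r^{-6}r^{-2-2\nu}.
\]
Each is at most $C_A\eps^2(1+r)^{-7-2\nu}$, which is $o(\eps(1+r)^{-4-\nu})$ once $\eps$ is small, uniformly in $r$.

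On the compact region $r\le 2R$, $\psi_\eps$ is smooth with $C^2$ norm bounded independently of $\eps$. This uses $\|F_{\rm app}-\olF\|_{C^{2,\alpha}}\le C\eps$ there, so $W_{\rm app}$ is uniformly controlled. Hence $|\mathcal N_\eps|\le C_{A,R}\eps^2$, which is absorbed by the margin $\tfrac34c_*A\eps(1+2R)^{-4-\nu}$ after shrinking $\eps_0$. Having fixed $R$ before $A$, and $A$ before $\eps$, this gives $\cM_\eps[F^+_\eps]\le0\le\cM_\eps[F^-_\eps]$ on all of $\R^8$. Strict ordering follows from $\psi_\eps>0$.

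The main point requiring care is the claimed bound $|F^\pm_\eps-\olF|\le C\eps(1+r^2)$, since the pieces of \eqref{Fapp} must be bounded globally and not just near the origin. I would bound them one at a time:
\begin{itemize}
\item $\eps U_1=\eps(r^2a_1+O(r^{-2-\tau_1}))$ by \eqref{U1sharp}.
\item The exact layer is $\lesssim\eps^{8/3}(1+r)^{1/3}$, with the core and exterior correctors controlled by \eqref{Zcore} and \eqref{Zext}.
\item $\eps^3U_{3,\rm reg}=O(\eps^3\log(2+r))$.
\item $A\eps\psi_\eps=O(\eps(1+r)^{-\nu})$.
\end{itemize}
Each is dominated by $C\eps(1+r^2)$. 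The only subtlety is checking that the $\eps$-dependent layer and correctors do not carry hidden negative powers of $\eps$ at large $r$. The displayed estimates have positive powers of $\eps$ times at most $r^{1/3}$, or $\log r$ for $U_{3,\rm reg}$, so this is fine.
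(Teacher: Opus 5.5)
Your proposal is correct and follows the paper's proof. The paper also Taylor-expands $\cM_\eps$ around $F_{\rm app}$, plays the residual of Theorem~\ref{globalapprox} against the margin of Lemma~\ref{compactcompletionlemma}, and bounds the remainder through Lemma~\ref{graphremainder} by $C_A\eps^2(1+r)^{-7-2\nu}$ outside and $C_A\eps^2$ on the compact part; your single choice of $A$ via $(1+r)^{-4-2\nu}\le(1+r)^{-4-\nu}$ is a legitimate simplification of the paper's $A$-then-$R_1$ choice. One small correction to your check of the final bound: once $|p|$ is large, the layer is not $O(\eps^{8/3}r^{1/3})$, since $Q(p)\sim|p|^{2/3}$ with $|p|$ up to about $r/\eps$; it contains $\eps^2U_2\sim\eps^2 r\,q_2(\theta)$ and $\eps^{5/2}U_{5/2}\sim\eps^{5/2}r^{1/2}$, which still lie well within $C\eps(1+r^2)$.
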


\subsection{Exhaustion on large balls}

Fix $M>1$ and consider
\begin{equation}\label{ballproblem}
\begin{cases}
\cM_\eps[F_M]=0 &\text{in }B_M,\\
F_M=F_{\rm app} &\text{on }\partial B_M.
\end{cases}
\end{equation}
Since $F_\eps^-<F_{\rm app}<F_\eps^+$ on $\partial B_M$, Proposition~\ref{globalorderedbarriers} gives ordered sub- and supersolutions for \eqref{ballproblem}.  We use the bounded-domain Dirichlet theory for translating mean-curvature graphs; see Zhou~\cite{ZhouDirichlet}.  A Euclidean ball is smooth and strictly mean convex, and it satisfies the non-closed-minimal condition appearing in that theorem: a compact minimal hypersurface without boundary cannot be contained in a ball, since each ambient coordinate is harmonic on it and the maximum principle would force all coordinates to be constant.  Hence the smooth boundary datum $F_{\rm app}|_{\partial B_M}$ admits a classical smooth translator $F_M$.  The comparison principle, applied to the global sub- and supersolutions, gives
\begin{equation}\label{ballorder}
F_\eps^-\le F_M\le F_\eps^+\qquad\hbox{in }B_M.
\end{equation}

For clarity, the estimates used in passing $M\to\infty$ are local.  Fix $R<\infty$.  From \eqref{ballorder} we obtain an $M$-independent oscillation bound on $B_{2R}$ for all $M>2R$.  The classical interior gradient estimate for translating mean-curvature graphs (for example Zhou~\cite[Lemma~B.2]{ZhouDirichlet}) gives
\[
\|DF_M\|_{L^\infty(B_R)}\le C_R,
\]
where $C_R$ depends on the oscillation on $B_{2R}$ but not on $M$.  The equation is therefore uniformly elliptic on $B_R$.  Interior Schauder estimates and bootstrapping yield, for every $k$,
\[
\|F_M\|_{C^{k,\alpha}(B_{R/2})}\le C_{R,k}.
\]
A diagonal subsequence therefore converges in $C^\infty_{\rm loc}(\R^8)$ to an entire solution $F_\eps$ of
\[
\cM_\eps[F_\eps]=0,
\]
with
\[F_\eps^-\le F_\eps\le F_\eps^+.\]
In particular
\begin{equation}\label{finalFestimate}
F_\eps=\olF+O\bigl(\eps(1+r^2)\bigr).
\end{equation}

Nothing global is hidden in the exhaustion: the global information is supplied by the ordered barriers.  Once the solution is sandwiched, the passage to the limit uses only standard local gradient and Schauder estimates on fixed balls.

\subsection{Returning to unit translating speed and non-convexity}\label{subsec:nonconvex}

If $F_\eps$ solves the speed-$\eps$ equation, define
\begin{equation}\label{unitscaling}
G_\eps(y):=\eps F_\eps(y/\eps).
\end{equation}
A direct scaling computation gives
\[
\div\left(\frac{\nabla G_\eps}{\sqrt{1+|\nabla G_\eps|^2}}\right)
=\frac1{\sqrt{1+|\nabla G_\eps|^2}},
\]
so $G_\eps$ is a unit-speed translator.

It remains to verify that $G_\eps$ is not convex.  The BDG graph $\olF$ is non-affine and satisfies
\[
\olF(u,v)=-\olF(v,u).
\]
If $\olF$ were convex, then composition with the orthogonal swap $(u,v)\mapsto(v,u)$ would show that $-\olF$ is convex.  Hence $\olF$ would be both convex and concave, and therefore affine, a contradiction.  Since $\olF$ is smooth, there exist a point $x_*$ and a unit vector $e_*$ such that
\[D^2\olF(x_*)[e_*,e_*]=-2\kappa<0.\]

On a fixed ball containing $x_*$, the sandwich first gives
\[
\|F_\eps-\olF\|_{C^0(B_{2|x_*|+3})}\le C\eps.
\]
Set $w_\eps:=F_\eps-\olF$.  Subtracting the minimal-surface equation for $\olF$ from the translator equation for $F_\eps$ and integrating the linearization along $\olF+s w_\eps$ gives
\[
\partial_i\big(a^{ij}_\eps(x)\partial_jw_\eps\big)
=\frac{\eps}{\sqrt{1+|\nabla F_\eps|^2}}
\]
on this fixed ball.  The preceding local gradient bound makes $a^{ij}_\eps$ uniformly elliptic on the fixed ball, with constants independent of $\eps$.  Standard quasilinear interior regularity first yields a uniform $C^{1,\alpha}$ bound for $F_\eps$ there; the coefficients $a^{ij}_\eps$ are then uniformly $C^{0,\alpha}$.  Applying the interior Schauder estimate to the displayed equation for $w_\eps$ gives
\[\|F_\eps-\olF\|_{C^{2,\alpha}(B_{2|x_*|+2})}\le C\eps.\]
Thus for $\eps$ small,
\[
D^2F_\eps(x_*)[e_*,e_*]\le-\kappa.
\]
Differentiating \eqref{unitscaling} twice gives
\[
D^2G_\eps(\eps x_*)
=\eps^{-1}D^2F_\eps(x_*),
\]
and hence
\[
D^2G_\eps(\eps x_*)[e_*,e_*]
\le-\frac\kappa\eps<0.
\]
Therefore $G_\eps$ is non-convex.

The parameter $\eps$ gives genuinely distinct unit-speed translators.  Along any fixed sector on which $F_0(y)\neq0$, the cubic BDG asymptotic and \eqref{finalFestimate} imply
\[
G_\eps(y)
=\eps F_\eps(y/\eps)
=\eps^{-2}F_0(y)+o(|y|^3)
\qquad (|y|\to\infty).
\]
Thus the leading cubic amplitude is $\eps^{-2}$; two different values of $\eps$ cannot produce the same graph up to a vertical translation.

Finally, if $N>8$, define
\[
\widetilde G_\eps(x_1,\dots,x_N)
=G_\eps(x_1,\dots,x_8).
\]
Adding flat variables leaves the translator equation unchanged and preserves the negative Hessian direction.  This proves the theorem in every dimension $N\ge8$.

This completes the proof of Theorem~\ref{mainthm}.

\appendix
\section{Weighted elliptic estimates on the BDG graph}\label{app:weighted}

This appendix records the elliptic estimates used in Sections~\ref{sec:exactbdg}--\ref{sec:globalbarriers}.  We include the details because the Jacobi operator is considered on a graph whose slope grows quadratically, so uniformity is most transparent after passing to intrinsic tangent-plane coordinates and rescaling by the local radial scale.

\subsection{Intrinsic charts and scale-invariant geometry}

Let $p\in\Gamma$ and write $R=1+r(p)$.  The curvature estimates established in Section~\ref{sec:exactbdg} imply
\[
 |A_\Gamma(p)|\le CR^{-1},\qquad
 |D_\Gamma^jA_\Gamma(p)|\le C_jR^{-1-j}.
\]
Choose geodesic normal coordinates on $T_p\Gamma$ in a ball of radius $2\vartheta R$, where $\vartheta>0$ is independent of $p$.  In these coordinates $\Gamma$ is a graph with height $q$ satisfying
\[
 |Dq|\le C\vartheta,\qquad
 R|D^2q|+R^2|D^3q|\le C.
\]
After the dilation $x=Rz$, the rescaled metric $g_R$ satisfies on $B_{2\vartheta}$
\[
 \lambda I\le g_R\le\Lambda I,
 \qquad
 \|g_R\|_{C^{1,\alpha}}+\|g_R^{-1}\|_{C^{1,\alpha}}\le C,
\]
with constants independent of $R$ and $p$.  Moreover the potential of the Jacobi operator rescales as
\[
 R^2|A_\Gamma|^2(Rz)=O(1)
\]
in $C^{0,\alpha}$.  Consequently the rescaled Jacobi operator is a uniformly elliptic operator with coefficients in a bounded subset of $C^{0,\alpha}$.

\subsection{Weighted H\"older norms}

For $\mu\in\mathbb R$ we use the norms introduced in Section~\ref{sec:weighted-fast-inverse}.  On the annulus $R\le r\le2R$, the weighted norm is equivalent, with constants independent of $R$, to the ordinary H\"older norm of the rescaled function.  More precisely, if
\[
 \widetilde h(z)=R^\mu h(Rz),
\]
then
\[
 \|\widetilde h\|_{C^{k,\alpha}(B_\vartheta)}
 \asymp
 \sum_{j=0}^k R^{\mu+j}\|D_\Gamma^jh\|_{C^0(B_{\vartheta R}(p))}
 +R^{\mu+k+\alpha}[D_\Gamma^kh]_{\alpha;B_{\vartheta R}(p)}.
\]
This equivalence is the reason for using intrinsic balls of radius comparable with $r$ in the definition of the weighted seminorm.

\begin{proposition}[Scale-invariant weighted Schauder estimate]\label{app-weighted-schauder}
Suppose $\cJ_\Gamma h=f$ and
\[
 \|f\|_{\alpha,\mu+2}+\|h\|_{0,\mu}<\infty.
\]
Then
\[
 \|h\|_{2,\alpha;\mu}
 \le C\bigl(\|f\|_{\alpha,\mu+2}+\|h\|_{0,\mu}\bigr).
\]
Equivalently, if the right-hand side is assigned weight $\mu$, one obtains the formulation in Lemma~\ref{weightedSchauder}.
\end{proposition}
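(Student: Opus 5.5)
The plan is the standard rescaling argument: localize to intrinsic balls of radius comparable with $r$, where the uniform chart geometry of the preceding subsection turns $\cJ_\Gamma$ into a uniformly elliptic operator with $C^{0,\alpha}$ coefficients bounded independently of the base point. Then apply the classical interior Schauder estimate on a fixed ball, and undo the scaling.

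Fix $p\in\Gamma$ with $R=1+r(p)$ large, and use the geodesic graph chart over $T_p\Gamma$ on $B_{2\vartheta R}(p)$. Set $\widetilde h(z)=R^\mu h(Rz)$ and $\widetilde f(z)=R^{\mu+2}f(Rz)$. The equation $\cJ_\Gamma h=f$ becomes
\[
 \Delta_{g_R}\widetilde h+R^2|A_\Gamma|^2(Rz)\,\widetilde h=\widetilde f
 \qquad\hbox{on }B_{2\vartheta}.
\]
By the chart bounds, $g_R$ is uniformly equivalent to the Euclidean metric with $C^{1,\alpha}$ bounds. Writing $\Delta_{g_R}$ in nondivergence form, its principal coefficients are therefore $C^{1,\alpha}$ and its first-order coefficients, which involve one derivative of $g_R$, are $C^{0,\alpha}$. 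The potential $R^2|A_\Gamma|^2(Rz)$ is bounded in $C^{0,\alpha}$. All of these bounds are independent of $p$. The interior Schauder estimate on $B_\vartheta\subset B_{2\vartheta}$ then gives
\[
 \|\widetilde h\|_{C^{2,\alpha}(B_\vartheta)}\le C\bigl(\|\widetilde f\|_{C^{0,\alpha}(B_{2\vartheta})}+\|\widetilde h\|_{C^0(B_{2\vartheta})}\bigr).
\]

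Next I undo the scaling using the norm equivalence recorded above. The quantity $\|\widetilde h\|_{C^{2,\alpha}(B_\vartheta)}$ controls
\[
 \sum_{j\le2}R^{\mu+j}\|D^j_\Gamma h\|_{C^0(B_{\vartheta R}(p))}+R^{\mu+2+\alpha}[D^2_\Gamma h]_{\alpha;B_{\vartheta R}(p)}.
\]
On the right-hand side, $\|\widetilde f\|_{C^{0,\alpha}(B_{2\vartheta})}\lesssim\|f\|_{\alpha,\mu+2}$ and $\|\widetilde h\|_{C^0(B_{2\vartheta})}\lesssim\|h\|_{0,\mu}$. These hold because on $B_{2\vartheta R}(p)$ the radial variable is comparable with $R$; this requires $\vartheta$ small, using that $r$ is $1$-Lipschitz along $\Gamma$. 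Taking the supremum over $p$ then gives the weighted $C^0$ parts of $\|h\|_{2,\alpha;\mu}$ for $j=0,1,2$.

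For the seminorms $[(1+r)^jD^j_\Gamma h]_{\alpha,\mu}$ with $j\le 2$, consider pairs $y_1,y_2$ in an annulus $R\le r\le 2R$ with $d_\Gamma(y_1,y_2)\le R/4$. Cover the annulus by finitely overlapping intrinsic balls $B_{\vartheta R}(p)$. Pairs with $d_\Gamma(y_1,y_2)<\vartheta R/2$ lie in a common ball, and the local estimate applies directly. Pairs with $\vartheta R/2\le d_\Gamma(y_1,y_2)\le R/4$ are handled by the triangle inequality and the $C^0$ bounds, at the cost of a constant depending only on $\vartheta$. The factor $(1+r)^j$ commutes with the H\"older difference up to a term of lower order, which is controlled by the $C^0$ bound of order $j$. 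The bounded region $r\le R_0$ is covered by finitely many charts and is handled by ordinary interior Schauder estimates.

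The main obstacle is the uniformity of the chart geometry. One needs the curvature bounds $|D^j_\Gamma A_\Gamma|\le C_jR^{-1-j}$ on the exact BDG graph $\Gamma$, not just on $\Gamma_0$, and a uniform radius $\vartheta R$ for the tangent-plane graph representation. I would obtain these from Lemma~\ref{curvature-scale} on $\Gamma_0$ combined with the normal-graph comparison \eqref{Gderall}, which shows that $\Gamma$ is an $O(R^{-2-\sigma_0})$ normal graph over $\Gamma_0$ with controlled derivatives. Once these bounds are in hand, the remaining steps are routine bookkeeping.
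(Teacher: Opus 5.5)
Your proposal is correct and follows the same route as the paper. Both arguments rescale on intrinsic tangent-plane charts of radius $\vartheta R$, use uniform ellipticity and $C^{0,\alpha}$ coefficient bounds for the rescaled Jacobi operator, apply interior Schauder on a fixed ball, scale back, and cover the end dyadically, with ordinary interior estimates on the compact part. Your extra bookkeeping for the weighted H\"older seminorms (far pairs via the triangle inequality, commuting the factor $(1+r)^j$), and your remark that chart uniformity rests on curvature bounds for $\Gamma$ obtained from the normal-graph comparison \eqref{Gderall}, only make explicit what the paper leaves implicit.
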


\begin{proof}
Fix $p$ with $r(p)\sim R$ and use the rescaled chart above.  Set
\[
 \widetilde h(z)=R^\mu h(Rz),\qquad
 \widetilde f(z)=R^{\mu+2}f(Rz).
\]
The equation takes the form
\[
 a_R^{ij}\partial_{ij}\widetilde h+b_R^i\partial_i\widetilde h+c_R\widetilde h=\widetilde f
\]
on $B_{2\vartheta}$, where $a_R$ is uniformly elliptic and the coefficients are uniformly bounded in $C^{0,\alpha}$.  Interior Schauder estimates therefore give
\[
 \|\widetilde h\|_{C^{2,\alpha}(B_\vartheta)}
 \le C\left(
 \|\widetilde f\|_{C^{0,\alpha}(B_{2\vartheta})}
 +\|\widetilde h\|_{L^\infty(B_{2\vartheta})}
 \right).
\]
The constant is independent of $p$ and $R$.  Scaling back yields the desired estimate on $R\le r\le2R$.  A dyadic covering of the end, together with the ordinary interior estimate on a fixed compact set, proves the global estimate.
\end{proof}

\subsection{Maximum principle and the positive Jacobi field}

Recall that the vertical component of the unit normal
\[
 Z=\langle\nu,e_9\rangle=(1+|\nabla\overline F|^2)^{-1/2}
\]
is positive and satisfies $\cJ_\Gamma Z=0$.  If $\Omega\subset\Gamma$ is bounded and $u$ satisfies
\[
 \cJ_\Gamma u\ge0\quad\text{in }\Omega,
 \qquad u\le0\quad\text{on }\partial\Omega,
\]
then $u\le0$ in $\Omega$.  Indeed, writing $u=Zv$ and evaluating at a positive interior maximum of $v$ eliminates the zeroth-order obstruction because $\cJ_\Gamma Z=0$.  This is the maximum principle used in the exhaustion arguments below.

\subsection{The fast inverse}

Let $0<\nu<1$ and let $b_\nu$ be the positive global barrier constructed in Section~\ref{sec:weighted-fast-inverse}, so that
\[
 \cJ_\Gamma b_\nu\le-c(1+r)^{-4-\nu},
 \qquad
 b_\nu\asymp(1+r)^{-2-\nu}.
\]

\begin{proposition}[Fast inverse with full weighted control]\label{app-fast-inverse}
If $f$ is $O(4)\times O(4)$-invariant and $\|f\|_{\alpha,4+\nu}<\infty$, there is a unique solution of
\[
 \cJ_\Gamma h=f,
 \qquad
 \|h\|_{2,\alpha;2+\nu}<\infty,
\]
and
\[
 \|h\|_{2,\alpha;2+\nu}\le C\|f\|_{\alpha,4+\nu}.
\]
\end{proposition}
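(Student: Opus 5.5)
The plan is exhaustion on bounded intrinsic domains. The barrier $b_\nu$ controls the solution uniformly in the domain. Proposition~\ref{app-weighted-schauder} upgrades the weighted sup bound to the full $\|\cdot\|_{2,\alpha;2+\nu}$ norm. Uniqueness comes from a comparison argument with $b_\nu$, using the maximum principle furnished by the positive Jacobi field $Z$.

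\emph{Step 1 (Dirichlet problems).} Put $\Omega_M=\Gamma\cap\{r<M\}$ and solve
\[
\cJ_\Gamma h_M=f\ \hbox{in }\Omega_M,\qquad h_M=0\ \hbox{on }\partial\Omega_M .
\]
By the maximum principle of Corollary~\ref{jacobi-max}, the operator has trivial Dirichlet kernel. Existence then follows from the Fredholm alternative, for instance after conjugating by $Z$ to remove the zeroth-order term. The source is invariant and the Dirichlet problem is unique, so $h_M$ is $O(4)\times O(4)$-invariant.

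\emph{Step 2 (uniform bound).} Set $k=\|f\|_{\alpha,4+\nu}$. Then $|f|\le k(1+r)^{-4-\nu}\le (k/c)\bigl(-\cJ_\Gamma b_\nu\bigr)$. Hence the functions $\pm h_M-(k/c)b_\nu$ satisfy $\cJ_\Gamma(\cdot)\ge0$ in $\Omega_M$ and are $\le0$ on $\partial\Omega_M$, because $b_\nu>0$. Corollary~\ref{jacobi-max} gives
\[
|h_M|\le (k/c)\,b_\nu\le Ck(1+r)^{-2-\nu},
\]
with $C$ independent of $M$.

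\emph{Step 3 (derivative bounds and limit).} Proposition~\ref{app-weighted-schauder} applies with $\mu=2+\nu$ on annuli well inside $\Omega_M$. The rescaled charts have radius $\vartheta R$, so this covers $r\le M/2$. It yields
\[
\|h_M\|_{2,\alpha;2+\nu}\bigl(\{r<M/2\}\bigr)\le Ck .
\]
By Arzel\`a--Ascoli and a diagonal argument, $h_M\to h$ in $C^2_{\rm loc}$. The limit solves $\cJ_\Gamma h=f$ and inherits $|h|\le Ckb_\nu$. Applying Proposition~\ref{app-weighted-schauder} once more gives \eqref{fast-inverse-est}. Near the boundary I would use local boundary Schauder only to guarantee continuity of $h_M$; no uniformity is needed there.

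\emph{Step 4 (uniqueness).} This is the main subtle point. Suppose $\cJ_\Gamma h=0$ with $|h|\le A r^{-2-\nu}$. For any $\delta>0$, the function $\delta b_\nu$ with $b_\nu\asymp r^{-2-\nu}$ does not dominate $h$ at infinity, so comparison with $b_\nu$ alone fails. I would instead use a slower positive supersolution $b_{\nu'}$ with $0<\nu'<\nu$, built by the same construction \eqref{qnu-barrier}. It satisfies $\cJ_\Gamma b_{\nu'}\le0$ and $b_{\nu'}\asymp r^{-2-\nu'}$. Then $|h|/b_{\nu'}\to0$ as $r\to\infty$, so on $\Omega_M$ we get $\pm h\le \delta b_{\nu'}$ on $\partial\Omega_M$ for $M$ large. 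The maximum principle gives $\pm h\le\delta b_{\nu'}$ everywhere, and letting $\delta\to0$ yields $h=0$.

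The delicate point is that the maximum principle for $\cJ_\Gamma$ holds on bounded domains because $Z>0$ is a Jacobi field. The strict barrier is needed only for the quantitative bound in Step 2, not for uniqueness.
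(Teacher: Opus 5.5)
Your proof is correct. Existence and the weighted estimate follow the paper's proof step for step: Dirichlet problems on $\{r<M\}$, made solvable by the maximum principle coming from $Z$; the uniform bound $|h_M|\le Ck\,b_\nu$ from the strict barrier; the weighted Schauder upgrade; and a diagonal limit. Restricting to $\{r<M/2\}$ before passing to the limit is only more careful bookkeeping of the same step.

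The real difference is in uniqueness. The paper compares $h$ with $b_\nu$ itself, choosing $R_\delta$ so that $|h|\le\delta b_\nu$ on $\{r=R_\delta\}$. But $h$ and $b_\nu$ both decay like $r^{-2-\nu}$, so the hypothesis $h=O(r^{-2-\nu})$ only makes $|h|/b_\nu$ bounded, not small. As written, that choice of $R_\delta$ is not available for small $\delta$.

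Your slower supersolution $b_{\nu'}$, $0<\nu'<\nu$, gives $|h|/b_{\nu'}\to0$. All comparisons then take place on the bounded sets $\Omega_M$, where Corollary~\ref{jacobi-max} applies verbatim, and this closes the argument.

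As you note, strictness of the barrier plays no role in uniqueness. The Jacobi field itself would suffice: $Z\asymp(1+r)^{-2}$, $\cJ_\Gamma(\pm h-\delta Z)=0$, and $|h|/Z=O(r^{-\nu})\to0$. The same bounded-domain comparison then gives $|h|\le\delta Z$ for every $\delta>0$, with no extra barrier to construct.
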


\begin{proof}
For $R\gg1$ solve the Dirichlet problem
\[
 \cJ_\Gamma h_R=f\quad\text{in }\Gamma\cap\{r<R\},
 \qquad h_R=0\quad\text{on }\{r=R\}.
\]
The bounded-domain problem is solvable for every $R$.  Indeed, the positive Jacobi field gives the maximum principle and hence triviality of the Dirichlet kernel; the Fredholm alternative for the uniformly elliptic Dirichlet operator then gives existence.  Put $M=\|f\|_{0,4+\nu}$.  After multiplying $b_\nu$ by a fixed constant,
\[
 \cJ_\Gamma(CMb_\nu)\le-|f|.
\]
Applying the maximum principle to $h_R-CMb_\nu$ and $-h_R-CMb_\nu$ gives
\[
 |h_R(y)|\le CM(1+r(y))^{-2-\nu},
\]
uniformly in $R$.  Proposition~\ref{app-weighted-schauder} upgrades this to a uniform $C^{2,\alpha}$ weighted estimate.  A diagonal subsequence converges on compact subsets to a global solution $h$ satisfying the stated estimate.

For uniqueness, let $\cJ_\Gamma h=0$ and $h=O(r^{-2-\nu})$.  Given $\delta>0$, choose $R_\delta$ so large that $|h|\le\delta b_\nu$ on $\{r=R_\delta\}$.  The maximum principle on the exterior region, followed by the compact maximum principle, gives $|h|\le\delta b_\nu$ globally.  Letting $\delta\downarrow0$ yields $h\equiv0$.
\end{proof}

\section{Taylor expansion of the graphical translator operator}\label{app:taylor}

For completeness we give the graph-variable estimate used in the residual analysis.  Write
\[
 \mathcal M_\varepsilon(F)
 =\operatorname{div}\mathfrak a(\nabla F)-\varepsilon\mathfrak s(\nabla F),
\]
where
\[
 \mathfrak a(p)=\frac{p}{\sqrt{1+|p|^2}},
 \qquad
 \mathfrak s(p)=\frac1{\sqrt{1+|p|^2}}.
\]
Let $W=(1+|\nabla F|^2)^{1/2}$ and suppose
\[
 |\nabla\delta|\le c_0W
\]
with $c_0$ sufficiently small.  Then for $0\le\tau\le1$,
\[
 W_\tau:=\bigl(1+|\nabla F+\tau\nabla\delta|^2\bigr)^{1/2}\asymp W.
\]

The derivatives of the two coefficient maps satisfy, along the entire segment,
\[
 |D^2\mathfrak a(\nabla F+\tau\nabla\delta)|\le CW^{-2},
 \qquad
 |D^2\mathfrak s(\nabla F+\tau\nabla\delta)|\le CW^{-3}.
\]
Taylor's formula gives
\[
 \mathfrak a(\nabla F+\nabla\delta)
 -\mathfrak a(\nabla F)-D\mathfrak a(\nabla F)\nabla\delta
 =\int_0^1(1-\tau)D^2\mathfrak a(\nabla F+\tau\nabla\delta)
 [\nabla\delta,\nabla\delta]d\tau.
\]
Taking one divergence differentiates either $\nabla\delta$ or the coefficient.  In the first case one obtains
\[
 CW^{-2}|D\delta||D^2\delta|.
\]
In the second case differentiating the coefficient introduces $D^2F$ and one more factor $W^{-1}$, giving
\[
 CW^{-3}|D^2F|\,|D\delta|^2.
\]
The source term has no spatial divergence, and its second derivative contributes
\[
 C\varepsilon W^{-3}|D\delta|^2.
\]
Consequently
\[
|\mathcal M_\varepsilon(F+\delta)-\mathcal M_\varepsilon(F)-D\mathcal M_\varepsilon(F)\delta|
\le C\Bigl(
W^{-2}|D\delta||D^2\delta|+W^{-3}|D^2F|\,|D\delta|^2
+\varepsilon W^{-3}|D\delta|^2
\Bigr).
\]
On a scaled intrinsic ball the same computation can be applied to difference quotients.  The coefficients have uniform H\"older control because $W_\tau\asymp W$, and the product estimate in $C^{0,\alpha}$ gives the H\"older version used in Section~\ref{sec:globalapprox}.  Notice in particular that absolute smallness of $|D\delta|$ is not required.  For the leading perturbation $\delta=\varepsilon U_1$ one has $|D\delta|=O(\varepsilon r)$ but $W\asymp r^2$, hence $|D\delta|/W=O(\varepsilon/r)$ on the end.

\section{Slow Jacobi modes on the homogeneous BDG graph}\label{app:slow-jacobi}

This appendix gives the model calculation behind the slow corrections used in
Section~\ref{sec:slowmatch}.  Related formulas appear in the Jacobi analysis of
the BDG graph in \cite{DPKW,DPPW}.  We include the derivation because the
translator construction uses not only existence of a solution but also the
precise radial degree, parity, behavior at Simons' cone, and intrinsic derivative
bounds.  These features are easiest to see on the homogeneous model and are then
transferred to the exact BDG graph by the comparison estimates of
Section~\ref{sec:exactbdg}.

\subsection{Vertical and normal linearizations}

Let
\[
 F_0(r,\theta)=r^3g(\theta),\qquad
 \Gamma_0=\{(x,F_0(x)):x\in\mathbb R^8\}.
\]
For a vertical variation $F_0+\tau\phi$, the first variation of graph mean
curvature is
\begin{equation}\label{app-Hprime}
 H'(F_0)[\phi]
 =\operatorname{div}\left(
 \frac{\nabla\phi}{W_0}
 -\frac{(\nabla F_0\cdot\nabla\phi)\nabla F_0}{W_0^3}
 \right),
 \qquad W_0=(1+|\nabla F_0|^2)^{1/2}.
\end{equation}
If $h$ is the corresponding normal displacement, then $\phi=W_0h$ and
\begin{equation}\label{app-vertical-normal}
 \mathcal J_{\Gamma_0}h=H'(F_0)[W_0h].
\end{equation}
Thus one may compute the leading Jacobi behavior in vertical variables and
convert to normal variables only after the radial and angular structure is
understood.  This simple observation is important near the cone: a vertical
angular derivative can be singular even though the intrinsic derivative of the
normal displacement is small.

For invariant functions $\phi=\phi(r,\theta)$, the leading homogeneous part of
\eqref{app-Hprime} will be denoted by $L_0$.  Since $F_0$ has degree three and
$W_0\sim r^2(9g^2+(g')^2)^{1/2}$, homogeneity alone shows that
\[
 L_0(r^\beta q(\theta))=r^{\beta-4}\mathcal A_\beta q(\theta).
\]
We now derive the angular operator explicitly.

\subsection{Separation of variables and divergence form}

Set
\[
 w_0(\theta)=\frac{\sin^3(2\theta)}{(9g^2+(g')^2)^{3/2}}.
\]
A direct substitution into the first variation gives
\begin{equation}\label{app-separation}
 \mathcal A_\beta q
 =\frac{9g^{(\beta+4)/3}}{\sin^3(2\theta)}
 \left[
 w_0g^{2/3}\bigl(g^{-\beta/3}q\bigr)'
 \right]'.
\end{equation}
The most useful feature of \eqref{app-separation} is visible before any
integration: for every $\beta$,
\[
 q_\beta(\theta)=g(\theta)^{\beta/3}
\]
annihilates $\mathcal A_\beta$.  This is the separated homogeneous Jacobi mode
of radial degree $\beta$.  It is the reason the inhomogeneous problem can be
integrated twice without solving a new second-order angular equation from
scratch.

Suppose
\begin{equation}\label{app-slow-source}
 L_0(r^\beta q)=r^{\beta-4}p(\theta).
\end{equation}
Integrating \eqref{app-separation} first from $\theta$ to the axis and then from
the cone to $\theta$ gives
\begin{align}\label{app-qformula}
q(\theta)=g(\theta)^{\beta/3}
\Bigg[ A
&-\frac19\int_{\pi/4}^{\theta}
 g(s)^{-2/3}\frac{(9g(s)^2+g'(s)^2)^{3/2}}{\sin^3(2s)}\\
&\qquad\times
 \left(\int_s^{\pi/2}p(\tau)g(\tau)^{-(\beta+4)/3}
 \sin^3(2\tau)\,d\tau\right)ds\Bigg].\notag
\end{align}
The constant $A$ multiplies the homogeneous mode.  It will be chosen according
to parity, regularity at the cone, or matching with another region.

Formula \eqref{app-qformula} is the basic slow-mode formula.  We next analyze
its three values $\beta=2,1,0$ that occur in the translator expansion.

\subsection{The even $r^{-2}$ source: a degree-two correction}

Let $p_2$ be smooth and even with respect to reflection across the cone.  Setting
$\beta=2$ in \eqref{app-slow-source} yields
\[
 L_0(r^2a_1(\theta))=r^{-2}p_2(\theta).
\]
Near the cone write $x=\theta-\pi/4$.  The BDG angular profile is odd,
\[
 g(\theta)=g_1x+g_3x^3+O(x^5),\qquad g_1>0,
\]
while $p_2$ is even.  In \eqref{app-qformula} the apparent powers of $g$ cancel
so that the bounded branch of $a_1$ extends evenly through $x=0$.  In
particular
\[
 a_1(\theta)=a_{10}+a_{12}x^2+O(x^4).
\]
The constant $a_{10}$ is generally nonzero.  Therefore the first translator
correction does not preserve the zero set of $F_0$ on the Simons cone.  This is
the quantitative origin of the recentering introduced in
Section~\ref{sec:outer}: the cone is shifted by an amount of order
$\varepsilon r^2$ before the next correction is resolved.

A second consequence of \eqref{app-qformula} is the sign identity used in the
transversal nondegeneracy.  If $p_2>0$, differentiation of the bracket gives
\[
 g^{5/3}\left(g^{-2/3}a_1\right)'<0
\]
in the sector.  The exact normalization is unimportant; what matters is that the
first correction changes monotonically in the weighted angular variable.  This
prevents the recentered transversal coefficient from vanishing.

\subsection{The odd $r^{-3}$ source: the fractional cone mode}

Let $p_3$ be smooth and odd.  With $\beta=1$ we seek
\[
 \phi_2=rq_2(\theta),\qquad L_0\phi_2=r^{-3}p_3(\theta).
\]
Oddness implies $p_3(\theta)=p_{31}x+O(x^3)$.  Expanding the inner integral in
\eqref{app-qformula} and using $g\sim g_1x$ gives
\begin{equation}\label{app-q2cone}
 q_2(\theta)=A g(\theta)^{2/3}+O(g(\theta)^{5/3}).
\end{equation}
Thus
\begin{equation}\label{app-phi2cone}
 \phi_2(r,\theta)
 \sim A r g^{2/3}
 =A\frac{t^{2/3}}{r},\qquad t=r^3g(\theta).
\end{equation}
The exponent $2/3$ is forced by the angular operator; it is not introduced by a
choice of ansatz.  This is why the same fractional power reappears later in the
inner parabolic problem.

To understand the geometry of \eqref{app-phi2cone}, convert to normal
variables.  Since $W_0\sim r^2$ on the end,
\[
 h_2=\frac{\phi_2}{W_0}
 \sim r^{-1}g^{2/3}
 \sim \frac{t^{2/3}}{r^3}.
\]
The coordinate derivative $\partial_\theta h_2$ has an apparent factor
$g^{-1/3}$, but the intrinsic metric degenerates in precisely the compensating
way.  More explicitly, for invariant $h$ one has
\begin{equation}\label{app-intrinsic-gradient}
 |D_{\Gamma_0}h|^2
 =\frac{|h_r|^2}{1+9g^2r^4}
 +\frac{|h_\theta|^2}{r^2+r^6(g')^2}
 +\mathcal E_{r\theta}(h),
\end{equation}
where the mixed term satisfies the same bound as the geometric mean of the two
displayed terms.  For $h_2=r^{-1}q_2(\theta)$,
\[
 h_{2,r}=O(r^{-2}g^{2/3}),\qquad
 h_{2,\theta}=O(r^{-1}g^{-1/3}).
\]
In the outer region $r^2g\gtrsim1$ and $g'\asymp1$ near the cone, hence
\[
 \frac{|h_{2,\theta}|^2}{r^2+r^6(g')^2}
 \le Cr^{-8}g^{-2/3}
 \le Cr^{-20/3}.
\]
Away from the cone the coefficients are smooth and one simply obtains
$O(r^{-4})$ or better.  If the outer mode is cut off where $r^2g\sim1$, the
cutoff derivatives have the same or smaller order.  Consequently there exists
$\mu_*>0$ such that
\begin{equation}\label{app-odd-gradient-gain}
 |D_{\Gamma_0}h_2|^2\le Cr^{-4-\mu_*}
\end{equation}
in the region in which the outer expression is used.

Estimate \eqref{app-odd-gradient-gain} is one of the points for which it is
important to distinguish coordinate singularities from geometric singularities.
In the elliptic problems based on the BDG graph the odd symmetry allows one to
stop at the cone.  For translators, however, the equation contains the positive
even drift term and the cone becomes an interior transition set.  The quantity
$\partial_t h_2\sim t^{-1/3}$ then enters the equation directly, which is why the
inner layer of Section~\ref{sec:innerlayer} is necessary even though the
intrinsic gradient remains controlled.

\subsection{The even $r^{-4}$ source and the logarithmic resonance}

The borderline case corresponds to $\beta=0$:
\[
 L_0 q_4(\theta)=r^{-4}p_4(\theta).
\]
If $p_4$ vanishes at the cone to the order dictated by evenness, the integral
formula produces a bounded angular function whose first non-smooth term behaves
like $g^{1/3}$.  Such a term can be cut off at the inner scale, because after the
cutoff its Jacobi error decays strictly faster than $r^{-4}$.

If instead $p_4(\pi/4)\neq0$, there is a genuine degree-zero resonance.  The
radial function $\log r$ is the generalized homogeneous mode associated with
$\beta=0$.  Direct substitution gives
\begin{equation}\label{app-log-resonance}
 L_0(\log r)=r^{-4}\{c_*+b_*(\theta)\},
\end{equation}
where $c_*\neq0$ and $b_*$ is smooth, even, and vanishes at the cone.  Choose a
multiple $\lambda\log r$ so that the constant part of
$p_4-\lambda(c_*+b_*)$ vanishes at the cone.  The remainder can then be solved by
\eqref{app-qformula}.  Thus the general slow even correction at this order has
the form
\begin{equation}\label{app-even-slow-form}
 \phi_4(r,\theta)=\lambda\log r+q_4(\theta)+\text{faster terms}.
\end{equation}
This is the origin of the logarithmic term in Section~\ref{sec:slowmatch}.

\subsection{Cutting off the model modes and passing to the exact graph}

The separated solutions above solve the homogeneous model problem only on the
end.  To obtain global smooth functions, fix a cutoff $\chi$ equal to one for
$r\ge2R_0$ and zero for $r\le R_0$.  For the cone-singular angular terms one
uses in addition the adapted cutoff in
\[
 \zeta=r^2g(\theta).
\]
The reason for this choice is scale invariant: $\zeta\sim1$ is precisely where
the outer cone expansion ceases to be uniform.  On the support of
$D\chi(\zeta)$ one has $g\sim r^{-2}$, and each angular derivative of the cutoff
is compensated by the small size of the fractional angular profile.  The
resulting commutators therefore gain a positive power of $r$ over the original
slow source.  This is the same mechanism used explicitly in the matching
construction in Sections~\ref{sec:innerlayer} and~\ref{sec:slowmatch}.

Finally, let $\Gamma$ be the exact BDG graph and let $\pi:\Gamma\to\Gamma_0$ be
the normal correspondence on the end.  The refined asymptotics established in
Section~\ref{sec:exactbdg} imply that for corresponding functions
$h=h_0\circ\pi$,
\begin{equation}\label{app-exact-model-comparison}
 \mathcal J_\Gamma h
 =\left[\mathcal J_{\Gamma_0}h_0
 +O(r^{-2-\sigma_B})D^2_{\Gamma_0}h_0
 +O(r^{-3-\sigma_B})D_{\Gamma_0}h_0
 +O(r^{-4-\sigma_B})h_0\right]\circ\pi
\end{equation}
for some $\sigma_B>0$.  Therefore each model slow mode yields an approximate
solution on $\Gamma$ whose error belongs to the fast weighted class.  Applying
Proposition~\ref{app-fast-inverse} removes this error.  This proves, within the
present paper, the precise principle used repeatedly in the main construction:
extract the slow homogeneous mode explicitly, cut it off at its natural scale,
transfer it to the exact graph, and solve only the genuinely fast remainder by
weighted inversion.

The calculation also clarifies the division of labor between the two pieces of
Jacobi theory.  The fast inverse controls errors once the radial leading term has
been removed; the separated formulas determine that leading term and its cone
behavior.  Neither statement alone is sufficient for the translator problem.

\section*{Acknowledgements}

The research of M. del Pino is supported by the Royal Society Research Professorship grant RP-R1-180114 and by the ERC/UKRI Horizon Europe grant ASYMEVOL, EP/Z000394/1. The research of J. Wei is partially supported by GRF of RGC of Hong Kong entitled ``On critical and supercritical Fujita equation''.

The authors used ChatGPT (OpenAI) strictly as an auxiliary tool for language editing and presentation of the manuscript. All mathematical ideas, arguments and conclusions are those of the authors and are their sole responsibility. This paper was first announced in \cite{DPW-Milan} in 2011.

\end{document}